\newtheorem{theorem}[subsection]{Theorem}
\newtheorem{subtheorem}[subsubsection]{Theorem}
\newtheorem{subcorollary}[subsubsection]{Corollary}
\newtheorem{lemma}[subsection]{Lemma}
\newtheorem{subproposition}[subsubsection]{Proposition}
\newtheorem{subdefinition}[subsubsection]{Definition}
\newtheorem{sublemma}[subsubsection]{Lemma}
\theoremstyle{definition}
\newtheorem{notation-convention}[subsection]{Notations
and Conventions}
\newtheorem{subremark}[subsubsection]{Remark}
\newtheorem{anitem}[subsubsection]{}
\newtheorem{blank}[subsection]{}
\newcommand{\leftexp}[2]{{\vphantom{#2}}^{#1}{#2}}
\newcommand{\bb}{\mathbb}
\newcommand{\s}{\mathscr}
\newcommand{\fk}{\mathfrak}
\begin{document}

\title{Generic Base Change, Artin's Comparison Theorem, and the
Decomposition Theorem for Complex Artin stacks}
\author{Shenghao Sun}
\date{}
\maketitle

\begin{abstract}
We prove the generic base change theorem for stacks, and give an
exposition on the lisse-analytic topos of complex analytic stacks,
proving some comparison theorems between various derived
categories of complex analytic stacks. This enables us to deduce the
decomposition theorem for perverse sheaves on complex Artin stacks
with affine stabilizers from the case over finite fields.
\end{abstract}

\section{Introduction}\label{sec-intro}

In the study of the topology of complex algebraic varieties, the
notion of intersection (co)homology and the decomposition theorem
have played an important role. They are also quite useful in some
other fields, such as representation theory. See \cite{Bry} for a
detailed introduction.

In \cite{BL} the notion of perverse sheaves was generalized to
spaces with group actions (the so-called \textit{equivariant
perverse sheaves}), and the decomposition theorem was proved in this
case. The notion of (middle) perverse sheaves has also been
generalized to algebraic stacks \cite{LO3}, and the decomposition
theorem has been proved for algebraic stacks of finite type with
affine stabilizers over a finite field \cite{Sun2}. The result for
algebraic stacks over the complex numbers was also announced in
(\cite{Sun2}, 3.15), and we publish the proof in this article. For
the necessity of the assumption on the stabilizers, we direct the
reader to (\cite{Sun2}, Section 1) for a counter-example of
Drinfeld.

\begin{blank}\label{affine-stab}
Let $k$ be a field and let $\mathcal X$ be a $k$-algebraic stack. We
say that $\mathcal X$ has \textit{affine stabilizers} if for every
$x\in\mathcal X(\overline{k}),$ the group scheme $\text{Aut}_x$ is
affine. Note that, since being affine is fpqc local on the base,
for any finite field extension $k'/k$ and any $x\in\mathcal X(k'),$ the $k'$-group scheme $\text{Aut}_x$ is affine.
\end{blank}

Here is the main result, in a simplified and global form.

\begin{theorem}
Let $f:X\to Y$ be a proper morphism of finite diagonal between
complex Artin stacks of finite type, with affine stabilizers, and
let $f^{\emph{an}}:X^{\emph{an}}\to Y^{\emph{an}}$ be the associated
morphism of complex analytic stacks. Then there exist locally closed
irreducible smooth substacks $Y_{\alpha}\subset Y,$ irreducible $\bb C$-local systems $L_{\alpha\beta}$ on $Y_{\alpha},$ and integers $d_{\alpha\beta}\ge0,$ the index set for $(\alpha,\beta)$ being finite, such that we have a decomposition
$$
IH^n(X^{\emph{an}},\bb
C)\simeq\bigoplus_{\alpha,\beta}IH^{n-d_{\alpha\beta}}
(\overline{Y}_{\alpha}^{\emph{an}},L_{\alpha\beta})
$$
for each $n\in\bb Z.$
\end{theorem}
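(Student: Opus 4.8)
The strategy is to deduce the statement from the decomposition theorem over $\overline{\mathbb{F}}_q$ of \cite{Sun2}, the two inputs being the comparison theorems and the generic base change theorem proved in the body of the paper. First one passes from the complex-analytic to the algebraic $\ell$-adic picture over $\mathbb{C}$. By the comparison theorems there is a fully faithful functor $\epsilon^{*}\colon D^b_c(X,\overline{\mathbb{Q}}_\ell)\to D^b_c(X^{\mathrm{an}},\overline{\mathbb{Q}}_\ell)$ from the constructible derived category of the algebraic stack $X$, commuting with proper pushforward, carrying $IC_X$ to $IC_{X^{\mathrm{an}}}$, and inducing isomorphisms on hypercohomology; composing it with the coefficient equivalence $D^b_c(X^{\mathrm{an}},\overline{\mathbb{Q}}_\ell)\simeq D^b_c(X^{\mathrm{an}},\mathbb{C})$ attached to an abstract field isomorphism $\overline{\mathbb{Q}}_\ell\simeq\mathbb{C}$, it is enough to exhibit a decomposition
$$Rf_{*}IC_X\;\simeq\;\bigoplus_{\alpha,\beta}IC_{\overline{Y}_\alpha}(L_{\alpha\beta})[-d_{\alpha\beta}]$$
in $D^b_c(Y,\overline{\mathbb{Q}}_\ell)$, with $Y_\alpha\subset Y$ locally closed irreducible smooth, $L_{\alpha\beta}$ an irreducible $\overline{\mathbb{Q}}_\ell$-local system on $Y_\alpha$, and $d_{\alpha\beta}\in\mathbb{Z}$; here the $d_{\alpha\beta}$ come out $\ge 0$ automatically, since $Rf_{*}$ preserves $D^{\ge 0}$ while $IC_X$ and each summand $IC_{\overline{Y}_\alpha}(L_{\alpha\beta})$ lie in $D^{\ge 0}$ with non-vanishing $\mathcal{H}^{0}$ in the standard normalization. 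Applying $\mathbb{H}^{n}(Y,-)$ to such a decomposition and transporting back through $\epsilon^{*}$ and the coefficient equivalence then produces the claimed formula for $IH^{n}(X^{\mathrm{an}},\mathbb{C})$.

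To get the decomposition of $Rf_{*}IC_X$ over $\mathbb{C}$, spread out: choose a finitely generated $\mathbb{Z}$-subalgebra $A\subset\mathbb{C}$ and a proper morphism of finite diagonal $f_A\colon\mathcal{X}_A\to\mathcal{Y}_A$ of Artin stacks of finite type over $A$ with affine stabilizers whose base change to $\mathbb{C}$ is $f$; all the hypotheses being of finite presentation, this exists after shrinking $\operatorname{Spec}A$. Shrinking further, one may assume — and this is where the generic base change theorem is used, applied to $f_A$ and the complex $IC_{\mathcal{X}_A}$ — that $IC_{\mathcal{X}_A}$ restricts to $IC_{\mathcal{X}_{\bar t}}$ on every geometric fibre, that the formation of $Rf_{A*}IC_{\mathcal{X}_A}$ and of its perverse cohomology sheaves commutes with base change on $\operatorname{Spec}A$, and that $\operatorname{Spec}A$ and $\mathcal{Y}_A$ carry compatible stratifications along whose strata those perverse cohomology sheaves are lisse. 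Pick a closed point $s\in\operatorname{Spec}A$; its residue field is finite, and over $\overline{\kappa(s)}=\overline{\mathbb{F}}_q$ the decomposition theorem of \cite{Sun2} applies — this is the step that needs the affine-stabilizers hypothesis, which rules out Drinfeld's counterexample — and shows that $Rf_{\bar s*}IC_{\mathcal{X}_{\bar s}}$ has property $(\star)$: it is a finite direct sum of shifts $IC_{\overline{Z}}(L)[m]$ with $Z$ locally closed irreducible smooth in $\mathcal{Y}_{\bar s}$ and $L$ an irreducible local system.

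It remains to propagate $(\star)$ to the geometric generic fibre, and then to $\mathbb{C}$. Given the compatible stratifications fixed above, the validity of $(\star)$ for the geometric fibre of $Rf_{A*}IC_{\mathcal{X}_A}$ over a point $t$ amounts to the semisimplicity of the local systems on the strata together with the vanishing of the gluing $\operatorname{Ext}^{1}$-classes among the corresponding intersection complexes, and each of these is a constructible condition on $t$; hence the locus $W\subset\operatorname{Spec}A$ on which $(\star)$ holds is constructible. By \cite{Sun2}, $W$ contains every closed point of $\operatorname{Spec}A$; these are dense because a finitely generated $\mathbb{Z}$-algebra is Jacobson, and a constructible subset of an irreducible scheme that contains a dense set of points is itself dense, hence contains the generic point. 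So $(\star)$ holds on the geometric generic fibre. Since $Rf_{*}IC_X$ is, by generic base change, the pullback of $Rf_{A*}IC_{\mathcal{X}_A}$ along $\operatorname{Spec}\mathbb{C}\to\operatorname{Spec}A$, which factors through the geometric generic point, and since $(\star)$ is inherited under the field extension $\overline{\operatorname{Frac}(A)}\hookrightarrow\mathbb{C}$ — pullbacks of intersection complexes are intersection complexes, a smooth irreducible substack decomposing into smooth irreducible components and an irreducible local system into irreducible summands — we conclude that $Rf_{*}IC_X$ has property $(\star)$; reading off the summands gives the $Y_\alpha$, $L_{\alpha\beta}$ and $d_{\alpha\beta}$, and with the first paragraph this finishes the proof. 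The one point of real substance, beyond bookkeeping, is that the foundational inputs — the generic base change theorem, the constructibility and base-change behaviour of $Rf_{*}IC$ and of its perverse cohomology, and Artin's comparison theorem — all have to be available for Artin stacks of finite type with affine stabilizers, rather than just for schemes or analytic spaces; granting those (they occupy the rest of the paper), the argument is essentially formal, with \cite{Sun2} as the only input that is properly about stacks.
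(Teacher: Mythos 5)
Your overall strategy (spread out over a finitely generated $\bb Z$-algebra, invoke the finite-field decomposition theorem of \cite{Sun2} at a closed point, transport back to $\bb C$ through the comparison theorems) is the right one and is the paper's strategy in outline, but the step where you propagate the splitting property $(\star)$ from the closed fibres to the complex fibre has a genuine gap. You assert that, once stratifications are fixed, the validity of $(\star)$ at a geometric point $t$ reduces to semisimplicity of the local systems on the strata plus vanishing of gluing $\operatorname{Ext}^1$-classes, and that ``each of these is a constructible condition on $t$,'' so that the locus $W$ is constructible and the Jacobson property finishes the argument. That constructibility claim is unsubstantiated, and it is precisely the hard content of the theorem: semisimplicity of the restriction of a lisse sheaf to the fibre over $t$ is a statement about the monodromy representation of $\pi_1$ of that fibre, and these fundamental groups (topological over $\bb C$, \'etale with wild phenomena in characteristic $p$) are not related in any way that makes semisimplicity a constructible condition in $t$; likewise the relevant $\operatorname{Ext}$-groups are not a priori of formation compatible with passage from closed fibres to the generic fibre. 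The paper avoids any such fibrewise-constructibility argument: using generic base change for both $f_*$ and $R\s Hom$ (Theorem \ref{Tbc}) it shrinks the base so that the $\operatorname{Ext}$-groups themselves are literally identified over $\bb C$, over a strictly henselian $V\subset\bb C$, and over $\bb F$, yielding the equivalence of prescribed-stratification categories $D^b_{\s S,\mathcal L}(\mathcal X,\Lambda)\simeq D^b_{\s S_s,\mathcal L_s}(\mathcal X_s,\Lambda)$ of Proposition \ref{6.1.9}; the decomposition is then transported through this equivalence, and purity over the finite field (needed to apply \cite{Sun2}) is supplied by the ``geometric origin'' mechanism and Lafforgue's theorem in Lemma \ref{6.2.6}. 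Your argument, as written, has no substitute for \ref{6.1.9}, and without it the passage from all closed points to the generic point does not go through.

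A secondary, smaller gap: you say the spread-out $f_A\colon\mathcal X_A\to\mathcal Y_A$ can be taken proper, of finite diagonal and with affine stabilizers ``after shrinking $\operatorname{Spec}A$, all the hypotheses being of finite presentation.'' Affineness of the stabilizers at every geometric point of the special fibre is not a finite-presentation condition and does not follow formally; the paper needs a genuine argument for it (\ref{abelian-rank}, using flatness, stratification, and the semicontinuity of abelian ranks from SGA3), and similarly for finiteness of the diagonal of the special fibre (\ref{finidiag}). Since \cite{Sun2} is applied precisely at the closed fibres, these verifications cannot be skipped. Restricting attention to $K=IC_X$, as you do, is fine for the statement in question and does let you bypass the general ``semi-simple of geometric origin'' formalism of Theorem \ref{6.2.5}, but it does not repair either of the two gaps above.
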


See (\ref{6.2.5}) for the general and local version of the theorem.

We briefly mention some technical issues. One would like to deduce
the decomposition theorem over $\bb C$ from that over finite fields,
as in (\cite{BBD}, Section 6). In order for the argument to work,
one must generalize the generic base change theorem to stacks. Also,
in order to obtain a topological statement, one has to prove some
comparison theorems between different topologies. Roughly speaking,
the generic base change theorem relates lisse-\'etale sheaves over
$\bb C$ with lisse-\'etale sheaves over $\bb F$ (algebraic closure of a finite field), and the comparison
theorems relate lisse-\'etale sheaves over $\bb C$ with
lisse-analytic sheaves over $\bb C.$

\textbf{Organization.} In $\S\ref{sec-gbc}$ we prove the generic
base change for $Rf_*$ and $R\mathscr Hom,$ and in
$\S\ref{sec-analytic}$ we develop the theory of constructible
sheaves and their derived categories on complex analytic stacks that
are \textit{algebraic;} in particular, we give the comparison
between the lisse-\'etale topos and the lisse-analytic topos, and
the comparison between the adic version and the topological version
of the derived category of the lisse-analytic topos. In
$\S\ref{sec-decomp-C},$ after giving a comparison between bounded
derived categories with prescribed stratification over the complex
numbers and over an algebraic closure of a finite field, we finish
the proof of the decomposition theorem for stacks over $\bb C.$

\begin{notation-convention}\label{notat-conv}

\begin{anitem}\label{nc-adic}
Let
$(\Lambda,\mathfrak m)$ be a complete DVR of mixed characteristic,
with finite residue field $\Lambda_0$ of characteristic $\ell$ and
uniformizer $\lambda.$ Let $\Lambda_n=\Lambda/\mathfrak m^{n+1}$, for $n\in\bb N.$
\end{anitem}

\begin{anitem}\label{ft}
By an Artin stack, or an algebraic stack, we mean an algebraic stack
in the sense of M. Artin (\cite{Ols2}, 1.2.22) \textit{of finite
type} over the base.
We will use $\mathcal{X,Y,}\cdots$ to denote
algebraic stacks over a general base $S$ in $\S\ref{sec-gbc}.$ In
$\S\ref{sec-analytic}$ we use them to denote complex algebraic
stacks, and $\fk{X,Y,}\cdots$ for complex analytic stacks. By a presentation of an algebraic stack $\mathcal X,$ we mean a smooth surjection $\pi:X\to\mathcal X$ where $X$ is a scheme.
\end{anitem}

\begin{anitem}
By a variety
over $k$ we mean a separated reduced $k$-scheme of finite
type. For a $k$-algebraic stack $\mathcal X,$ we say that it is
\textit{essentially smooth} if $(\mathcal X_{\overline{k}})
_{\text{red}}$ is smooth over $\overline{k}.$
\end{anitem}

\begin{anitem}
For a map $f:X\to Y$ and a complex of sheaves $K$ on $Y,$ we
sometimes write $H^n(X,K)$ for $H^n(X,f^*K).$
\end{anitem}

\begin{anitem}\label{nota-derived}
We will denote $Rf_*,Rf_!,Lf^*$ and $Rf^!$ by
$f_*,f_!,f^*$ and $f^!$ respectively in most part of this paper,
except in (\ref{compar}, \ref{conts-derived}), where we use symbols like
$R\gamma_*$ and $R\pi_*$ to emphasize that we are considering
the derived functors.
\end{anitem}

\begin{anitem}
We will only consider the middle perversity. We use $\leftexp{p}{\s H}^i$
and $\leftexp{p}{\tau}_{\le i}$ to denote cohomology and truncations with
respect to this perverse $t$-structure.
\end{anitem}
\end{notation-convention}

\begin{flushleft}
\textbf{Acknowledgment.}
\end{flushleft}

I would like to thank my advisor Martin Olsson for introducing this
topic to me, and giving so many suggestions during the writing. Yves
Laszlo pointed out some mistakes and gave many helpful comments.
(\ref{abelian-rank}) is added to correct a mistake pointed out by
Ofer Gabber. Brian Conrad and Matthew Emerton have helped to answer my questions related to this paper on the MathOverflow website.
I thank the referee for valuable comments, which help to improve the article. The
revision of the paper was done during the stay in Ecole
polytechnique CMLS (UMR 7640) and Universit\'e Paris-Sud (UMR 8628),
while I was supported by ANR grant G-FIB.

\section{Generic base change}\label{sec-gbc}

As mentioned in $\S\ref{sec-intro},$ an important step in deducing
the decomposition theorem over $\bb C$ from that over $\bb F_q$ (as
in \cite{BBD}, Section 6) will be to compare the derived categories
of the fiber over $\bb C$ and the fiber over $\bb F$ of some stack
over a local ring with mixed characteristics. For doing that, we prove the
generic base change theorem (as in \cite{SGA4.5}, Th.\ finitude) for
stacks in this section.

\begin{blank}\label{adic-pair}
Let $S$ be a scheme satisfying the following condition denoted (LO):
it is a noetherian affine excellent finite-dimensional scheme in
which $\ell$ is invertible, and all $S$-schemes of finite type have
finite $\ell$-cohomological dimension. The theory of derived
categories and the six operations in \cite{LO1,LO2} then applies to
algebraic stacks over $S$ locally of finite type. As
mentioned in (\ref{ft}), we will only consider those
\textit{of finite type} over $S.$

We refer to (\cite{Sun}, $\S3$) for the definition and
basic properties of stratifiable complexes in detail; see also (\cite{LO2}, Section 3) or (\cite{Sun}, Definition 2.2) for more discussion on the $\lambda$-adic derived category of an algebraic stack. Here we only give a quick review of the definitions.

Let $\s A=\s A(\mathcal
X)$ be the abelian category $\text{Mod}(\mathcal X^{\bb
N}_{\text{lis-\'et}},\Lambda_{\bullet})$ of $\Lambda_{\bullet}$-modules on the simplicial lisse-\'etale topos of $\cal X$, and
let $\s{D(A)}$ be the ordinary derived category of $\s A.$ An object $M\in\s{D(A)}$ is called a \textit{$\lambda$-complex} (resp.\ an \textit{AR-null complex}) if all cohomology systems $\s H^i(M)$ are AR-adic (resp.\ AR-null). Let $\s D_c(\s A)$ be the full subcategory of $\lambda$-complexes, and let $D_c(\mathcal X,\Lambda),$ the \textit{adic derived category of $\mathcal X,$} to be the quotient of $\s D_c(\s A)$ by the full subcategory of AR-null complexes.

For a pair $(\s S,\mathcal L),$ where $\s S$
is a stratification of the algebraic stack $\mathcal X,$ and
$\mathcal L$ assigns to every stratum $\mathcal
U\in\s S$ a finite set $\mathcal{L(U)}$ of
isomorphism classes of simple locally constant constructible
(abbreviated as \textit{lcc}) $\Lambda_0$-sheaves on
$\mathcal U,$ we define $\s D_{\s S,\mathcal L}(\s A)$ to be the
full subcategory of $\s D_c(\s A)$ consisting of
complexes of projective systems $K=(K_n)_n$ such that, for all
$i,n\in\bb Z$ and for every $\mathcal U\in\s
S,$ the restrictions $\s H^i(K_n)|_{\mathcal U}$ are
lcc with Jordan-H\"older components contained in
$\mathcal{L(U)}.$ Define $D_{\s S,\mathcal L}(
\mathcal X,\Lambda)$ to be its essential image under the
localization $\s D_c(\s A)\to D_c(\mathcal X,\Lambda);$ in other words,
it is the quotient of $\s D_{\s S,\mathcal L}(\s A)$ by the
thick subcategory of AR-null complexes. It is a
triangulated category. Similarly, one can define, for each $n\ge0,$ a triangulated full subcategory $D_{\s S,\mathcal L}(\mathcal X,\Lambda_n)$ of $D_c(\mathcal X,\Lambda_n):$ it consists of those complexes $K$ such that $\s H^i(K)|_{\mathcal U}$ are lcc with Jordan-H\"older components contained in $\mathcal{L(U)},$ for each integer $i$ and stratum $\mathcal U\in\s S.$

Finally we define $D_c^{\text{stra}}(\mathcal X,\Lambda)$ to be the 2-direct limit of all the $D_{\s S,\mathcal L}(\mathcal X,\Lambda)$'s; similarly for $D_c^{\text{stra}}(\mathcal X,\Lambda_n).$
\end{blank}

\begin{blank}\label{pushforward-gbc}
For a morphism $f:\mathcal X\to\mathcal Y$ of
$S$-algebraic stacks and $K\in D_c^+(\mathcal
X,\Lambda_n)$ (resp.\ $D_c^+(\mathcal X,\Lambda)$), we say
that \textit{the formation of $f_*K$ commutes with generic
base change,} if there exists an open dense subscheme
$U\subset S$ such that for any morphism $g:S'\to U\subset
S$ with $S'$ satisfying (LO), the base change morphism
$g'^*f_*K\to f_{S'*}g''^*K$ is an isomorphism.
Recall that
the base change morphism is defined as follows: one applies
$f_*$ to the adjunction map $K\to g''_*g''^*K$, and then
uses the adjunction $(g'^*,g'_*)$ to obtain the base change morphism, as shown in the following 2-Cartesian diagram
$$
\xymatrix@C=.6cm{
\mathcal X \ar[d]_-f && \mathcal X_{S'} \ar[ll]_-{g''}
\ar[d]^-{f_{S'}} \\
\mathcal Y \ar[d] && \mathcal Y_{S'} \ar[ll]_-{g'} \ar[d]
\\
S & U \ar@{_{(}->}[l] & S'. \ar[l]_-g}
$$
\end{blank}

\begin{lemma}\label{L4.1}
\emph{(i)} Let $P:Y\to\mathcal Y$ be a presentation, and let the
following diagram be 2-Cartesian:
$$
\xymatrix@C=.8cm{
\mathcal X \ar[d]_-f & \mathcal X_Y \ar[l]_-{P'}
\ar[d]^-{f'} \\
\mathcal Y & Y. \ar[l]^-P}
$$
Then for $K\in D_c^+(\mathcal X,\Lambda)$, the formation of $f_*K$
commutes with generic base change if and only if the
formation of $f'_*(P'^*K)$ commutes with generic base
change.

\emph{(ii)} Let $K'\to K\to K''\to K'[1]$ be an exact triangle
in $D_c^+(\mathcal X,\Lambda)$, and let $f:\mathcal X\to\mathcal Y$ be an
$S$-morphism. If the formations of $f_*K'$ and $f_*K''$
commute with generic base change, then so does the formation
of $f_*K.$

\emph{(iii)} Let $f:\mathcal X\to\mathcal Y$ be a schematic
morphism, and let $K\in D^+_{\{\mathcal X\},\mathcal
L}(\mathcal X,\Lambda)$ for some
finite set $\mathcal L$ of isomorphism classes of simple lcc
$\Lambda_0$-sheaves on $\mathcal X.$ Then the formation
of $f_*K$ commutes with generic base change.

\emph{(iv)} Let $K\in D_c^+(\mathcal X,\Lambda)$, and let
$j:\mathcal U\to\mathcal X$ be an open immersion with
complement $i:\mathcal Z\to\mathcal X.$ For $g:S'\to S,$
consider the following diagram obtained by base change:
$$
\xymatrix@C=.9cm @R=.8cm{
&& \mathcal U_{S'} \ar[lld]_-{g_{\mathcal U}} \ar@{^{(}->}
[r]^-{j_{S'}} & \mathcal X_{S'} \ar[lld]^(.3){g''} \ar[d]
^(.7){f_{S'}} & \mathcal Z_{S'} \ar[lld]^(.3){g_{\mathcal
Z}} \ar@{_{(}->}[l]_-{i_{S'}} \\
\mathcal U \ar@{^{(}->}[r]_-j & \mathcal X \ar[d]^-f &
\mathcal Z \ar@{_{(}->}[l]^-i & \mathcal Y_{S'}
\ar[lld]^-{g'} & \\
& \mathcal Y &&&}.
$$
Suppose that the base change morphisms
\begin{gather*}
g'^*(fj)_*(j^*K)\longrightarrow(f_{S'}j_{S'})_*g_{\mathcal
U}^*(j^*K), \\
g'^*(fi)_*(i^!K)\longrightarrow(f_{S'}i_{S'})_*g_{\mathcal
Z}^*(i^!K) \qquad\emph{and} \\
g''^*j_*(j^*K)\longrightarrow j_{S'*}g_{\mathcal U}^*(j^*K)
\end{gather*}
are isomorphisms, then the base change morphism
$g'^*f_*K\to f_{S'*}g''^*K$ is also an isomorphism.

\emph{(v)} Let $f:\mathcal X\to\mathcal Y$ be a schematic
morphism of $S$-algebraic stacks, and let $K\in
D_c^{+,\emph{stra}}(\mathcal X,\Lambda)$. Then the
formation of $f_*K$ commutes with generic base change on
$S.$

\emph{(vi)} Let $f:\mathcal X\to\mathcal Y$ be a morphism of
$S$-algebraic stacks, and let $j:\mathcal U\to\mathcal Y$ be
an open immersion with complement $i:\mathcal Z\to\mathcal
Y.$ Let $K\in D_c^{+,\emph{stra}}(\mathcal X,\Lambda)$. Then there exists an open dense subscheme $S^0\subset S,$ such that for any
map $g:S'\to S,$ with associated diagram in which the
squares are 2-Cartesian:
$$
\xymatrix@C=.7cm @R=.6cm{
& \mathcal X_{\mathcal U,S'} \ar@{^{(}->}[rr]^-{j'_{S'}}
\ar'[d]^-{f_{\mathcal U_{S'}}}[dd]
\ar[ld]_-{g''_{\mathcal U}} && \mathcal X_{S'}
\ar'[d]^-{f_{S'}}[dd] \ar[ld]_-{g''} && \mathcal
X_{\mathcal Z,S'} \ar[dd]^-{f_{\mathcal Z_{S'}}}
\ar[ld]_-{g''_{\mathcal Z}} \ar@{_{(}->}[ll]_-{i'_{S'}} \\
\mathcal X_{\mathcal U} \ar@{^{(}->}[rr]_(.3){j'}
\ar[dd]_-{f_{\mathcal U}} && \mathcal X \ar[dd]^(.3)f &&
\mathcal X_{\mathcal Z} \ar@{_{(}->}[ll]^(.7){i'}
\ar[dd]_(.3){f_{\mathcal Z}} & \\
& \mathcal U_{S'} \ar[ld]_-{g'_{\mathcal U}}
\ar@{^{(}->}'[r]_-{j_{S'}}[rr] && \mathcal Y_{S'}
\ar[ld]^(.4){g'} && \mathcal Z_{S'},
\ar@{_{(}->}'[l][ll]^-{i_{S'}} \ar[ld]^-{g'_{\mathcal Z}}
\\
\mathcal U \ar@{^{(}->}[rr]_-j && \mathcal Y && \mathcal Z
\ar@{_{(}->}[ll]^-i}
$$
if the base change morphisms
$$
g_{\mathcal U}'^*f_{\mathcal U*}(j'^*K)\to f_{\mathcal
U_{S'}*}g''^*_{\mathcal U}(j'^*K)\quad\text{and}\quad
g'^*_{\mathcal Z}f_{\mathcal Z*}(i'^!K)\to f_{\mathcal
Z_{S'}*}g''^*_{\mathcal Z}(i'^!K)
$$
are isomorphisms, then the base change morphism $g'^*f_*K\to f_{S'*}g''^*K$ is an isomorphism over $\mathcal Y_{S'\times_SS^0}$.
\end{lemma}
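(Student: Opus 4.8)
\textbf{Proof proposal for Lemma \ref{L4.1}(vi).}

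The plan is to reduce the general morphism $f:\mathcal X\to\mathcal Y$ to the schematic case treated in part (v), by choosing a presentation $P:Y\to\mathcal Y$ and using part (i) to transfer the base-change-commutation statement between $f$ and the pulled-back morphism $f':\mathcal X_Y\to Y$. Since $Y$ is a scheme, $f'$ need not be schematic, but $\mathcal X_Y\to\mathcal X$ is a presentation of $\mathcal X$ (it is the pullback of $P$), so iterating we may assume we are working with a simplicial presentation; in the end it suffices to handle the case where both source and target are schemes, where $f$ is automatically schematic and part (v) applies directly. The subtlety is that part (v) gives base change over \emph{all} of $S$ for a schematic $f$, with no need to shrink $\mathcal Y$; what part (vi) adds is the extra data of the open--closed decomposition $\mathcal U\hookrightarrow\mathcal Y\hookleftarrow\mathcal Z$ and the hypothesis that base change already holds after restricting to $\mathcal U$ and after applying $i'^!$ and pushing from $\mathcal X_{\mathcal Z}$. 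So the real content is a gluing argument in the style of (v)/(iv): one wants to produce a \emph{single} dense open $S^0\subset S$, depending only on $f$, $\mathcal U$, $\mathcal Z$ and $K$ (not on $g$), over which the two given isomorphisms force the isomorphism $g'^*f_*K\to f_{S'*}g''^*K$.

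First I would apply part (v) to the schematic morphism $f_{\mathcal Z}:\mathcal X_{\mathcal Z}\to\mathcal Z$ — wait, $f_{\mathcal Z}$ is generally not schematic either, so instead I invoke it only where it is legitimately schematic, or more robustly I argue as follows. Choose a presentation and reduce to $\mathcal Y$ a scheme $Y$; then $\mathcal X\to Y$ is schematic (the source is an algebraic stack over a scheme, but pushforward from a stack is not covered by (v) unless the \emph{morphism} is schematic). Here is the fix: apply (i) once more to a presentation $X\to\mathcal X$, reducing to the situation where $\mathcal X=X$ is a scheme and $\mathcal Y=Y$ is a scheme, so $f:X\to Y$ is a morphism of schemes, hence schematic, and $K\in D_c^{+,\mathrm{stra}}(X,\Lambda)$. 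Now restrict to $U$ and $Z$: by the classical generic base change theorem (SGA 4$\frac12$, Th.\ finitude), applied to $f_U=fj':f^{-1}(U)\to U$ and separately to the closed part, there is a dense open $S^0\subset S$ over which the formation of $(f_U)_*(j'^*K)$ and of $(f\circ i')_*(i'^!K)$ both commute with generic base change; also shrink $S^0$ so that the formation of $j_*$ of the restricted sheaf commutes with generic base change (an open immersion, hence easy, by the analogue of part (iv)'s third hypothesis). Crucially this $S^0$ is chosen \emph{before} $g$.

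With $S^0$ fixed, for an arbitrary $g:S'\to S$ I would factor through $S'\to S'\times_S S^0$ and then apply part (iv) to the base-changed situation over $S'\times_S S^0$: the three hypotheses of (iv) are exactly (a) base change for $(f_{\mathcal U})_*(j'^*K)$, which is one of the two given isomorphisms in (vi), (b) base change for $(f\circ i')_*(i'^!K)$, which is the other, and (c) base change for $j'_*(j'^*K)$, which holds over $S^0$ by the choice above; (iv) then yields that $g'^*f_*K\to f_{S'*}g''^*K$ is an isomorphism over $\mathcal Y_{S'\times_S S^0}$, which is the claim. The main obstacle is bookkeeping: making sure the open $S^0$ truly depends only on the ambient data and not on $g$ — this is where one must be careful that part (v) and the classical generic base change theorem produce the dense open \emph{uniformly}, and that the reduction through presentations (part (i)) does not silently reintroduce a dependence on $S'$; since a presentation of $\mathcal X$ or $\mathcal Y$ is fixed at the outset, independent of $g$, and base change along $P'$ and its iterates is an honest descent statement, this goes through.
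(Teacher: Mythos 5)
There is a genuine gap, and it sits at the heart of your reduction. Part (i) of the lemma only lets you replace the \emph{target} $\mathcal Y$ by a presentation $Y$, at the cost of replacing $\mathcal X$ by $\mathcal X\times_{\mathcal Y}Y$, which is still a stack; it gives no way to replace the \emph{source} by a presentation. Your "fix" — "apply (i) once more to a presentation $X\to\mathcal X$, reducing to the situation where $\mathcal X=X$ is a scheme" — is therefore not an application of (i) at all: there is no statement to the effect that base change for $f_*K$ can be checked after pulling $K$ back along a smooth cover of the source, and indeed $f_*$ does not commute with such a pullback; recovering $f_*K$ from a presentation of $\mathcal X$ requires cohomological descent along the simplicial hypercover, and then one needs a dense open of $S$ working \emph{uniformly} for all simplicial levels $f_p:X_p\to\mathcal Y$, which is exactly the difficulty that the proof of Theorem \ref{Tbc}(i) overcomes with the rigidification, the reduction to a gerbe $BG$, and the K\"unneth formula. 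If your reduction were legitimate, the hypotheses of (vi) would be superfluous and the whole generic base change theorem for stacks would follow immediately from SGA~4$\frac12$, which is not the case. A secondary slip: you identify the two hypotheses of (vi) (base change for $f_{\mathcal U*}(j'^*K)$ and $f_{\mathcal Z*}(i'^!K)$, pushforwards to $\mathcal U$ and $\mathcal Z$) with the first two hypotheses of (iv) (base change for $(fj')_*(j'^*K)$ and $(fi')_*(i'^!K)$, pushforwards all the way to $\mathcal Y$); these differ by a $j_*$, respectively an $i_*$, and bridging them is the actual content of the step.

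The paper's proof needs no reduction to schemes: it stays with the stacks and verifies the three hypotheses of (iv) directly. Since $fj'=jf_{\mathcal U}$, the base change morphism for $(fj')_*(j'^*K)$ factors as the base change morphism for $j_*$ applied to $f_{\mathcal U*}(j'^*K)$ followed by $j_{S'*}$ of the given isomorphism; the former is handled by (v), because $j$ is an open immersion (hence schematic) and $f_{\mathcal U*}(j'^*K)$ is again stratifiable, producing a dense open $S^0$ independent of $g$. For the closed part one uses $i_*=i_!$, so its base change morphism is an isomorphism with no shrinking, and the given isomorphism for $f_{\mathcal Z*}(i'^!K)$ does the rest. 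Finally the third hypothesis of (iv), base change for $j'_*(j'^*K)$, again follows from (v) applied to the schematic morphism $j'$, after a further shrinking independent of $g$. You correctly isolated the need for $S^0$ to be chosen before $g$ and the role of (iv), but the mechanism that makes this possible is the schematicity of $j$, $j'$, $i$ — not a reduction of $f$ itself to a morphism of schemes.
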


Similar results hold with $\Lambda$ replaced by $\Lambda_n\ (n\ge0),$ and the proof is the same.

\begin{proof}
(i) Given a map $g:S'\to S,$ consider the following
diagram
$$
\xymatrix@C=.6cm @R=.6cm{
& \mathcal X_Y \ar[ld]_-{P'} \ar'[d][dd]_-{f'} && \mathcal
X_{Y,S'} \ar[ll]_-{g_Y''} \ar[ld]^-{P'_{S'}}
\ar[dd]^-{f'_{S'}} \\
\mathcal X \ar[dd]_-f && \mathcal X_{S'} \ar[ll]_(.3){g''}
\ar[dd]^(.3){f_{S'}} & \\
& Y \ar[ld]^-P && Y_{S'} \ar'[l][ll]^(.4){g_Y'}
\ar[ld]^-{P_{S'}} \\
\mathcal Y && \mathcal Y_{S'} \ar[ll]_-{g'} &}
$$
where all squares are 2-Cartesian. For the base change
morphism $g'^*f_*K\to f_{S'*}g''^*K$ to be an isomorphism
on $\mathcal Y_{S'},$ it suffices for it to be an
isomorphism locally on $Y_{S'}.$ In the following
commutative diagram
$$
\xymatrix@C=1cm{
P_{S'}^*g'^*f_*K \ar[r]_-{(0)} \ar@{=}[d]_-{(1)} &
P_{S'}^*f_{S'*}g''^*K \ar[d]^-{(2)} \\
g'^*_YP^*f_*K \ar[d]_-{(3)} & f'_{S'*}P'^*_{S'}g''^*K
\ar@{=}[d]^-{(4)} \\
g'^*_Yf'_*P'^*K \ar[r]^-{(5)} & f'_{S'*}g''^*_YP'^*K,}
$$
(1) and (4) are canonical isomorphisms given by
$``P^*g^*\simeq g^*P^*",$ and (2) and (3) are canonical
isomorphisms given by $``P^*f_*=f_*P^*",$ which follows
from the definition of $f_*$ on the lisse-\'etale site.
Therefore, (0) is an isomorphism if and only if (5) is
an isomorphism.

(ii) This follows easily from the axioms of a triangulated
category (or 5-lemma):
$$
\xymatrix@C=.8cm{
g'^*f_*K' \ar[r] \ar[d]^-{\sim} & g'^*f_*K \ar[r] \ar[d] &
g'^*f_*K'' \ar[r] \ar[d]^-{\sim} & \\
f_{S'*}g''^*K' \ar[r] & f_{S'*}g''^*K \ar[r] &
f_{S'*}g''^*K'' \ar[r] &.}
$$

(iii) By (i) we may assume that $f:X\to Y$ is a morphism
of $S$-schemes. Note that the property of being
trivialized by a pair of the form $(\{\mathcal
X\},\mathcal L)$ is preserved when passing to a
presentation. By definition $f_*K$ is the class of the
system $(f_*\widehat{K}_n)_n,$ so it suffices to show that
there exists an open dense subscheme of $S$ over which
the formation of $f_*\widehat{K}_n$ commutes with base
change, for every $n.$ By the spectral sequence
$$
R^pf_*\mathscr H^q(\widehat{K}_n)\Longrightarrow
R^{p+q}f_*\widehat{K}_n
$$
and (ii), it suffices to show the existence of an
open dense subscheme of $S,$ over which the formations of $f_*L$
commute with base change, for all $L\in\mathcal
L.$ This follows from (\cite{SGA4.5}, Th. finitude).

(iv) Consider the commutative diagram
$$
\xymatrix@C=.5cm @R=.7cm{
g'^*f_*i_*i^!K \ar@{=}[d]_-{(1)} \ar[r]_-{(2)} &
f_{S'*}g''^*i_*i^!K \ar[r]_-{(3)} &
f_{S'*}i_{S'*}g_{\mathcal Z}^*i^!K \ar@{=}[d]^-{(4)} \\
g'^*(fi)_*i^!K \ar[rr]^-{(5)} &&
(f_{S'}i_{S'})_*g_{\mathcal Z}^*i^!K.}
$$
(1) and (4) are canonical isomorphisms, (5) is an
isomorphism by assumption, and (3) is the base change
morphism for $i_*,$ which is an isomorphism by
(\cite{LO2}, 12.5.3), since $i_*=i_!.$ Therefore, (2) is
an isomorphism. Similarly, consider the commutative
diagram
$$
\xymatrix@C=.5cm @R=.7cm{
g'^*f_*j_*j^*K \ar@{=}[d]_-{(1)} \ar[r]_-{(2)} &
f_{S'*}g''^*j_*j^*K \ar[r]_-{(3)} & f_{S'*}j_{S'*}
g_{\mathcal U}^*j^*K \ar@{=}[d]^-{(4)} \\
g'^*(fj)_*j^*K \ar[rr]^-{(5)} && (f_{S'}j_{S'})_*
g_{\mathcal U}^*j^*K.}
$$
(1) and (4) are canonical isomorphisms, and (3) and (5)
are isomorphisms by assumption, so (2) is an isomorphism.
Then apply (ii) to the exact triangle $i_*i^!K\to K\to
j_*j^*K\to.$

(v) By (i), we may assume that $f:X\to Y$ is a morphism of
$S$-schemes. Assume that $K$ is trivialized by $(\mathscr
S,\mathcal L),$ and let $j:U\to X$ be the immersion of an
open stratum in $\mathscr S$ with complement $i:Z\to X.$
Then
$j^*K\in D^+_{\{U\},\mathcal L(U)}(U,\Lambda)$, so by
(iii), the formation of $j_*(K|_U)$ commutes with generic
base change. This is the third base change isomorphism in
the assumption of (iv). By noetherian induction and (iv),
we may replace $X$ by $U$ and assume that $\mathscr S=\{X\}.$
The result follows from (iii).

(vi) In the commutative diagrams
$$
\xymatrix@C=.7cm @R=.5cm{
g'^*j_*f_{\mathcal U*}j'^*K \ar[rr]^-{(1)} \ar[d]_-{(2)}
&& j_{S'*}g'^*_{\mathcal U}f_{\mathcal U*}j'^*K
\ar[d]^-{(3)} \\
g'^*(fj')_*j'^*K \ar[r]_-{(4)} &
(f_{S'}j'_{S'})_*g''^*_{\mathcal U}j'^*K \ar[r]_-{(5)} &
j_{S'*}f_{\mathcal U_{S'}*}g''^*_{\mathcal U}j'^*K}
$$
and
$$
\xymatrix@C=.7cm @R=.5cm{
g'^*i_*f_{\mathcal Z*}i'^!K \ar[rr]^-{(6)} \ar[d]_-{(7)}
&& i_{S'*}g'^*_{\mathcal Z}f_{\mathcal Z*}i'^!K
\ar[d]^-{(8)} \\
g'^*(fi')_*i'^!K \ar[r]_-{(9)} & (f_{S'}i'_{S'})_*
g''^*_{\mathcal Z}i'^!K \ar[r]_-{(10)} &
i_{S'*}f_{\mathcal Z_{S'}*}g''^*_{\mathcal Z}i'^!K,}
$$
(2), (5), (7) and (10) are canonical isomorphisms, (3) and
(8) are isomorphisms by assumption, (6) is an isomorphism
by proper base change, and (1) is an isomorphism after
shrinking $S$ by (v). Therefore, (4) and (9) are
isomorphisms. Also by (v), the base change morphism
$g''^*j'_*(j'^*K)\to j'_{S'*}g''^*_{\mathcal U}(j'^*K)$
becomes an isomorphism after shrinking $S.$ Hence by (iv),
the base change morphism $g'^*f_*K\to f_{S'*}g''^*K$ is
an isomorphism after shrinking $S.$
\end{proof}

\begin{blank}\label{RHom-gbc}
For $K\in D_c^-(\mathcal X,\Lambda_n)$ and $L\in
D_c^+(\mathcal X,\Lambda_n),$ and for a morphism
$g:\mathcal Y\to\mathcal X,$ the base change morphism
$g^*R\s Hom_{\mathcal X}(K,L)\to R\s
Hom_{\mathcal Y}(g^*K,g^*L)$ is defined as follows. By
adjunction $(g^*,g_*),$ it corresponds to the morphism
$$
R\s Hom_{\mathcal X}(K,L)\to g_*R\s Hom_{\mathcal Y}(g^*K,g^*L)\simeq
R\s Hom_{\mathcal X}(K,g_*g^*L)
$$
obtained by applying $R\s Hom_{\mathcal X}(K,-)$ to the
adjunction morphism $L\to g_*g^*L.$ One can define the base
change morphism for $\Lambda$-coefficients in the same way.

Note that if $K'\to K\to K''\to K'[1]$ is an exact triangle, and the
base change morphisms for $R\s Hom(K',L)$ and $R\s Hom(K'',L)$ are
isomorphisms, then so is the base change morphism for $R\s Hom(K,L)$; similarly for the position of $L.$

We say that \textit{the formation of $R\s
Hom_{\mathcal X}(K,L)$ commutes with generic base change
on $S,$} if there exists an open dense subscheme $U\subset
S$ such that for any morphism $g:S'\to U\subset S$ with
$S'$ satisfying (LO), the base change morphism
$$
g'^*R\s Hom_{\mathcal X}(K,L)\to R\s Hom_{\mathcal X_{S'}}(g'^*K,g'^*L)
$$
is an isomorphism. Here $g':\mathcal X_{S'}\to\mathcal
X$ is the natural projection.
\end{blank}

The following is the main result of this section.

\begin{theorem}\label{Tbc}
\emph{(i)} Let $f:\mathcal X\to\mathcal Y$ be a morphism of
$S$-algebraic stacks. For every $K\in
D_c^{+,\emph{stra}}(\mathcal X,\Lambda_n)$ \emph{(}resp.\
$D_c^{+,\emph{stra}}(\mathcal X,\Lambda))$, the
formation of $f_*K$ commutes with generic base change on
$S.$

\emph{(ii)} For all $K,L\in D_c^b(\mathcal X,\Lambda_n),$ the formation of
$R\s Hom_{\mathcal X}(K,L)$ commutes with generic base change on $S.$
\end{theorem}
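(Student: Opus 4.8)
For part (i), the strategy is to reduce from a general morphism of stacks to a schematic morphism via Lemma \ref{L4.1}(i), and then to handle the schematic case by dévissage along a stratification. Concretely: factor an arbitrary $f:\cal X\to\cal Y$ through a presentation. More precisely, choose a presentation $P:Y\to\cal Y$; by Lemma \ref{L4.1}(i) the formation of $f_*K$ commutes with generic base change iff the formation of $f'_*(P'^*K)$ does, where $f':\cal X_Y\to Y$ is the base change of $f$ along $P$. Now $f'$ has target a scheme, but its source is still only an algebraic stack. To make $f'$ schematic I would further choose a presentation $Q:X\to\cal X_Y$ and apply Lemma \ref{L4.1}(i) once more to the composite with $f'$; since $P'^*K$ pulled back to $X$ still lies in $D_c^{+,\mathrm{stra}}$ (stratifiability is preserved under smooth pullback, as noted in the proof of \ref{L4.1}(iii)), we are reduced to the case where $f:X\to Y$ is a morphism of $S$-schemes with $K\in D_c^{+,\mathrm{stra}}(X,\Lambda)$. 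But this is precisely the content of Lemma \ref{L4.1}(v). The $\Lambda_n$-coefficient version is identical, using the parenthetical remark after the lemma. The only point requiring a word of care is that the composite $X\to\cal X_Y\to Y$ is genuinely schematic, which holds because $Y$ is a scheme and $X\to\cal X_Y$, $\cal X_Y\to Y$ are both representable once composed appropriately — actually the cleanest formulation is that a morphism from a scheme to a scheme is schematic, so after the two reductions there is nothing left to check.

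For part (ii), the idea is to convert the $R\s Hom$ statement into a pushforward statement. The standard device: if $g':\cal X_{S'}\to\cal X$ is the projection, then $R\s Hom_{\cal X}(K,L)$ and its base change are controlled by $f_*$ of internal Hom sheaves once we pass to a presentation, because over a scheme $X$ one has $R\s Hom_X(K,L)$ computed via the six operations and its formation commutes with generic base change by (\cite{SGA4.5}, Th.\ finitude) — this is the $R\s Hom$ half of generic base change for schemes. So again reduce via a presentation $\pi:X\to\cal X$: pulling back along the smooth $\pi$ commutes with $R\s Hom$ (for $K\in D_c^-$, $L\in D_c^+$, using $\pi^!=\pi^*[2d](d)$ on each smooth piece, or more simply the compatibility of $R\s Hom$ with smooth — in fact any — pullback as in \cite{LO2}), so it suffices to treat $K,L\in D_c^b(X,\Lambda_n)$ on a scheme $X$. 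There, cover by a dévissage: using the exact-triangle stability noted in \ref{RHom-gbc} (in both variables) together with the constructibility of $K$ and $L$, reduce to the case where $K=j_!\Lambda_0^{\oplus}$-type objects on strata and $L$ likewise, where everything is lcc on a stratum; then $R\s Hom$ of lcc sheaves is again lcc and its formation commutes with any base change, so the generic base change holds trivially over all of $S$. Assembling via finitely many triangles yields the open dense $U\subset S$.

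The main obstacle is part (i)'s reduction to the schematic, scheme-target case cleanly enough that Lemma \ref{L4.1}(v) applies verbatim — one must be slightly careful that after replacing $\cal Y$ by a presentation $Y$, the remaining morphism $f':\cal X_Y\to Y$ still has the source a stack, so a single application of \ref{L4.1}(i) does not finish the job; a second presentation of the source is needed, and one should check that the relevant complex stays stratifiable under this second pullback (it does, since smooth pullback preserves the trivializing pairs $(\s S,\cal L)$). For part (ii) the subtlety is purely bookkeeping: ensuring the dévissage in both the $K$ and $L$ variables terminates at objects for which $R\s Hom$ is lcc, and that finitely many applications of the triangle-stability suffice so that the finitely many resulting dense opens of $S$ can be intersected. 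Neither step involves anything beyond the tools already assembled in Lemma \ref{L4.1}, (\ref{RHom-gbc}), and (\cite{SGA4.5}, Th.\ finitude).
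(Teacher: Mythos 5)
Your part (i) has a genuine gap, and it sits exactly where the real content of the theorem lies. Lemma \ref{L4.1}(i) only lets you replace the \emph{target} $\mathcal Y$ by a presentation $P:Y\to\mathcal Y$ (replacing $\mathcal X$ by $\mathcal X\times_{\mathcal Y}Y$); it gives no license to replace the \emph{source} by a presentation. Your proposed second application of \ref{L4.1}(i) to $Q:X\to\mathcal X_Y$ composed with $f'$ is not an instance of that lemma, and no such reduction can work: $(f'\circ Q)_*Q^*P'^*K$ is not $f'_*P'^*K$, and controlling the former says nothing about the latter. For example, if $\mathcal X_Y=BG$ over $Y=\text{Spec }k$ and $K=\Lambda_0$, then $f'_*K$ computes the cohomology of the classifying stack (group cohomology of $G$), while pushing forward from the presentation $X=Y$ gives just $\Lambda_0$. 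So your argument never arrives at a situation where \ref{L4.1}(v) (which requires $f$ schematic) applies, and the non-representable part of $f$ is left untreated. That part is the heart of the paper's proof: after using \ref{L4.1}(i),(iii),(iv) to assume $\mathcal Y=Y$ is a scheme, $\s S=\{\mathcal X\}$, and the relative inertia is flat, one rigidifies $\mathcal X\xrightarrow{\pi}X\xrightarrow{b}Y$, disposes of $b_*$ by \ref{L4.1}(v) (using that $\pi_*K$ is again stratifiable), and then must prove generic base change for the gerbe $\pi$ directly: reduce to a neutral gerbe $BG\to X$ with $G$ a \emph{smooth} group space (d\'evissage via \ref{L4.1}(vi), normalization in finite extensions, replacing $G$ by $G_{\text{red}}$), and conclude via the smooth-hypercover spectral sequence $R^qf_{p*}f_p^*P^*L\Rightarrow R^{p+q}\pi_*L$ combined with the K\"unneth formula, after shrinking $S$ so that the formations of $f_{1*}f_1^*P^*L$ and $f_{1*}\Lambda_0$ commute with base change. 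None of this is present in your plan, and it cannot be bypassed.

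For part (ii), your first reduction agrees with the paper's: pass to a presentation, use the compatibility of $R\s Hom$ with smooth pullback (the paper cites \cite{LO1}, 4.2.3), and invoke the scheme case from (\cite{SGA4.5}, Th.\ finitude, 2.10). However, the extra d\'evissage you append is incorrect as stated: after reducing $K$ and $L$ to objects of the form $j_!L$ supported on strata, $R\s Hom_X(j_!L,j'_!L')$ is computed using $Rj_*$ (e.g.\ $R\s Hom(j_!L,M)\cong Rj_*R\s Hom(L,j^!M)$), so it is neither lcc nor of formation compatible with arbitrary base change; only \emph{generic} base change holds, which is precisely Th.\ finitude again. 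Since the scheme case is already covered by your citation, this step is redundant rather than fatal, but it should be removed or corrected.
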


\begin{proof}
(i) We can always replace a stack by its maximal reduced
closed substack, so we will assume that all stacks in the proof
are reduced.

Suppose that $K$ is $(\s S,\mathcal L)$-stratifiable for
some pair $(\s S,\mathcal L).$ By (\ref{L4.1} i, iii, iv), we can replace
$\mathcal Y$ by a presentation and replace $\mathcal X$ by an open
stratum in $\s S,$ to assume that $\mathcal Y=Y$ is a
scheme, that $\s S=\{\mathcal X\},$ that the
relative inertia $\mathcal I_f$ is flat over $\mathcal X$
and has components over $\mathcal X$ (\cite{Beh2}, 5.1.14); let
$$
\xymatrix@C=.7cm{
\mathcal X \ar[r]^-{\pi} & X \ar[r]^-b & Y}
$$
be the rigidification with respect to $\mathcal I_f.$
Replacing $\mathcal X$ by the inverse image of an open
dense subscheme of the $S$-algebraic space $X,$ we may
assume that $X$ is a scheme. Let $\s F=\pi_*K,$ which is
stratifiable (\cite{Sun}, 3.9). By (\ref{L4.1} v), the
formation of $b_*\s F$ commutes with generic base
change. To finish the proof, we shall show that the
formation of $\pi_*K$ commutes with generic base change.
As in the proof of (\ref{L4.1} iii), it suffices to show
that there exists an open dense subscheme $U$ of $S$, over
which the formations of $\pi_*L$ commute with any base
change $g:S'\to U,$ for all $L\in\mathcal L.$

By (\cite{Beh2}, 5.1.5), $\pi$ is smooth, so \'etale
locally it has a section. By (\ref{L4.1} i) we may assume
that $\pi:BG\to X$ is a neutral gerbe, associated to a
flat group space $G/X.$ By (\ref{L4.1} vi) we can use
d\'evissage and shrink $X$ to an open subscheme.
Using the same technique as the proof of (\cite{Sun},
3.9), we can reduce to the case where $G/X$ is smooth. For
the reader's convenience, we briefly recall this
reduction. Shrinking $X,$ we may assume that $X$ is an integral
scheme with function field $k(X),$ and that $G/X$ is a group
scheme. There exists a finite field extension $k''/k(X)$
such that $G_{\text{red}}$ is smooth over $\text{Spec }k''.$
Let $k'$ be the separable closure of $k(X)$ in $k''.$
Purely inseparable
morphisms are universal homeomorphisms. By taking the
normalization of $X$ in these field extensions, we get a
finite generically \'etale surjection $X'\to X,$ such that
$G_{\text{red}}$ is generically smooth over $X'.$
Shrinking $X$ and $X'$ we may assume that $X'\to X$ is an
\'etale surjection, and replacing $X$ by $X'$
(\ref{L4.1} i) we may assume that $G_{\text{red}}$ is
generically smooth over $X,$ and shrinking $X$ further we may
assume that $G_{\text{red}}$ is smooth over $X.$ Finally we may  replace $G$ by $G_{\text{red}}$, since the morphism
$BG_{\text{red}}\to BG$ is representable and radicial.

Now $P:X\to BG$ is a presentation, and we consider the associated
smooth hypercover. Let $f_p:G^p\to X\ (p\ge1)$ be the
structural maps, and let the following squares be 2-Cartesian:
$$
\xymatrix@C=.8cm @R=.6cm{
G_{S'}^p \ar[r]^-{f_{p,S'}} \ar[d]_-{g_p} & X_{S'}
\ar[r]^-{P_{S'}} \ar[d]_-{g'} & (BG)_{S'} \ar[r]^-{\pi_{S'}}
\ar[d]_-{g''} & X_{S'} \ar[r] \ar[d]_-{g'} & S' \ar[d]^-g \\
G^p \ar[r]_-{f_p} & X \ar[r]_-P & BG \ar[r]_-{\pi} & X
\ar[r] & S.}
$$
We have the spectral sequence
(\cite{LO2}, 10.0.9)
$$
R^qf_{p*}f_p^*P^*L\Longrightarrow R^{p+q}\pi_*L
$$
and similarly for the base change to $S'.$ We can regard the
map $f_p$ as a product $\prod_pf_1$ and apply the K\"unneth
formula (shrinking $X$ we can assume that $X$
satisfies the condition (LO), and we can apply
(\cite{LO2}, 11.0.14))
$$
f_{p*}f_p^*P^*L=f_{1*}(f_1^*P^*L)\otimes_{\Lambda_0}f_{1*}\Lambda_0
\otimes_{\Lambda_0}\cdots\otimes_{\Lambda_0}f_{1*}\Lambda_0.
$$
Shrink $S$ so that the formations of $f_{1*}f_1^*P^*L$ and
$f_{1*}\Lambda_0$ commute with any base change on $S.$
From the base change morphism of the spectral sequences
$$
\xymatrix@C=2cm @R=.5cm{
g'^*R^qf_{p*}f_p^*P^*L \ar@{=>}[r] \ar@{=}[d] &
g'^*R^{p+q}\pi_*L \ar[ddddd]^-{(1)} \\
\mathscr H^qg'^*(f_{1*}f_1^*P^*L\otimes f_{1*}\Lambda_0\otimes
\cdots\otimes f_{1*}\Lambda_0) \ar@{=}[d] & \\
\mathscr H^q(g'^*f_{1*}f_1^*P^*L\otimes
g'^*f_{1*}\Lambda_0\otimes\cdots\otimes
g'^*f_{1*}\Lambda_0) \ar[d]^-{\sim} & \\
\mathscr H^q(f_{1,S'*}g_1^*f_1^*P^*L\otimes
f_{1,S'*}\Lambda_0\otimes\cdots\otimes
f_{1,S'*}\Lambda_0) \ar@{=}[d] & \\
\mathscr H^q(f_{1,S'*}f_{1,S'}^*P_{S'}^*g''^*L\otimes
f_{1,S'*}\Lambda_0\otimes\cdots\otimes
f_{1,S'*}\Lambda_0) \ar@{=}[d] & \\
R^qf_{p,S'*}f_{p,S'}^*P_{S'}^*g''^*L \ar@{=>}[r] &
R^{p+q}\pi_{S'*}g''^*L}
$$
we see that the base change morphism (1) is an
isomorphism.

(ii) Let $g:S'\to S$ be any morphism, $P:X\to\mathcal X$ be a
presentation, and consider the 2-Cartesian diagrams
$$
\xymatrix@C=1.2cm @R=.6cm{
X_{S'} \ar[r]^-{P'} \ar[d]_-{g''} & \mathcal X_{S'}
\ar[r] \ar[d]_-{g'} & S' \ar[d]^-g \\
X \ar[r]_-P & \mathcal X \ar[r] & S.}
$$
For the base change morphism
$$
g'^*R\s Hom_{\mathcal X}(K,L)\to R\s Hom_{\mathcal X_{S'}}(g'^*K,g'^*L)
$$
to be an isomorphism, we can check it locally on $X_{S'}.$
Consider the commutative diagram
$$
\xymatrix@C=1.2cm @R=.6cm{
P'^*g'^*R\s Hom_{\mathcal X}(K,L) \ar[r]^-{(1)}
\ar[d]_-{(2)} & P'^*R\s Hom_{\mathcal X_{S'}}(g'^*K,g'^*L)
\ar[d]^-{(3)} \\
g''^*P^*R\s Hom_{\mathcal X}(K,L) \ar[d]_-{(4)} &
R\s Hom_{X_{S'}}(P'^*g'^*K,P'^*g'^*L) \ar[d]^-{(5)} \\
g''^*R\s Hom_X(P^*K,P^*L) \ar[r]^-{(6)} &
R\s Hom_{X_{S'}}(g''^*P^*K,g''^*P^*L),}
$$
where (2) and (5) are canonical isomorphisms, (3) and (4)
are isomorphisms by (\cite{LO1}, 4.2.3), and (6) is an
isomorphism after shrinking $S$ (\cite{SGA4.5}, Th.\
finitude, 2.10). Therefore (1) is an isomorphism after
shrinking $S.$
\end{proof}

\begin{subremark}\label{}
(i) This result strengthens (\cite{Ols3}, 9.10 ii), in that
the open subscheme in $S$ can be chosen to be independent
of the index $i$ as in $R^if_*F.$

(ii) As we only used $f_*,$ not $f_!,$ in the proof of the generic base change theorem, it may seem that the hypothesis (LO) on the base $S$ (cf.\ \ref{adic-pair}) is unnecessary. However, in the proof of (\cite{Sun}, 3.9), when proving that $f_*$ preserves stratifiability, which is needed in (\ref{Tbc}), we worked with the case for $f_!$ first, in order to do noetherian induction. Possibly this hypothesis on cohomological dimension can be removed in the future.
\end{subremark}

\section{Complex analytic stacks}\label{sec-analytic}

In this section, we give some fundamental results on
constructible sheaves and derived categories on the
lisse-analytic topos of the analytification of a complex
algebraic stack. The two main results in this section are:
the comparison between the adic derived categories of the
lisse-\'etale topos and the lisse-analytic topos (\ref{P-compar}),
and the comparison between the adic derived category and
the topological derived category of the lisse-analytic topos
(\ref{agree}).

\subsection{Lisse-analytic topos}

Stacks over topological categories have already been discussed, for
instance in \cite{Noo, Toe}. Strictly speaking, To\"en only
discussed analytic Deligne-Mumford stacks in \cite{Toe}, and Noohi
only discussed topological stacks in \cite{Noo} (and mentioned
analytic stacks briefly).

Since we are mainly interested in analytifications of
complex algebraic stacks, and will not study analytic
spaces and analytic stacks in full generality in this paper,
we will make a global assumption on analytic spaces:
we only consider analytic spaces of \textit{finite
dimension.} This rules out infinite disjoint unions of spaces of increasing dimensions, and is consistent with our assumption that
algebraic stacks are of finite type (\ref{ft}).

A morphism $f:X\to Y$ of complex analytic spaces is
\textit{smooth} if for every point $x\in X,$ there exist
open neighborhoods $x\in U\subset X$ and $f(x)\in V\subset Y,$
with $f(U)=V,$ such that $f|_U:U\to V$ is isomorphic to the
projection $\text{pr}_1:V\times Z\to V$ for some complex manifold
$Z$ (one can certainly take $Z$ to be a polydisk). In topology,
this is usually called a \textit{submersion,} but we will
use the algebro-geometric terminology of \textit{smoothness}
in the paper, if there is no confusion.

\begin{subdefinition}\label{analytic-stack}
Let $\textbf{\emph{Ana-Sp}}$ be the site of complex
analytic spaces with the analytic topology. A stack
$\fk X$ over this site is called an \emph{analytic
stack}, if the following hold:

\emph{(i)} the diagonal $\Delta:\fk X\to\fk X\times\fk X$ is representable
(by analytic spaces) and, letting the inertia $\fk{I_X}$ of
$\fk X$ be the fiber product $\fk X\times_{\Delta,\fk X\times\fk X,\Delta}\fk
X$ with $p_1:\fk{I_X}\to\fk X$ the first projection, the
complex Lie group $p_1^{-1}(x)$ has finitely many connected
components, for every $x\in\fk X(\bb C),$ and

\emph{(ii)} there exists a smooth surjection $P:X\to
\fk X,$ where $X$ is an analytic space.
\end{subdefinition}

We will call $P:X\to\fk X$ in (ii) an \textit{analytic
presentation} of $\fk X.$

\begin{anitem}\label{lis-an}
Similar to the lisse-\'etale topos of an algebraic stack,
one can define the \textit{lisse-analytic topos}
$\fk X_{\text{lis-an}}$ of an analytic stack
$\fk X$ to be the topos associated to the
\textit{lisse-analytic site} $\text{Lis-an}(\fk X)$
defined as follows:

$\bullet$ Objects: pairs $(U,u:U\to\fk X),$ where
$U$ is an complex analytic space and $u$ is a smooth
morphism;

$\bullet$ Morphisms: a morphism $(U,u\in\fk X
(U))\to(V,v\in\fk X(V))$ is given by a pair
$(f,\alpha),$ where $f:U\to V$ is a morphism of analytic
spaces and $\alpha:u\cong vf$ is an isomorphism in
$\fk X(U);$ the composition law is evident;

$\bullet$ Open coverings: $\{(j_i,\alpha_i):
(U_i,u_i\in\fk X(U_i))\to(U,u\in\fk X(U))\}_{i\in I}$
is an open covering if the maps $j_i:U_i\to U$
are open immersions and their
images cover $U.$

As in (\cite{LMB}, 12.2.1), one can show that, to give a sheaf
$F\in\fk X_{\text{lis-an}}$ is equivalent to giving a sheaf
$F_{U,u}$ in the analytic topos $U_{\text{an}}$ of $U$ for every
$(U,u)\in\text{Lis-an}(\fk X),$ and a morphism
$\theta_{f,\alpha}:f^{-1}F_{V,v}\to F_{U,u}$ for every morphism
$(f,\alpha):(U,u)\to(V,v)$ in $\text{Lis-an} (\fk X),$ such that

$\bullet\ \theta_{f,\alpha}$ is an isomorphism if $f$ is
an open immersion, and

$\bullet$ for every composition
$$
\xymatrix@C=1.2cm{
(U,u) \ar[r]^-{(f,\alpha)} & (V,v) \ar[r]^-{(g,\beta)} &
(W,w)}
$$
we have $\theta_{f,\alpha}\circ f^{-1}(\theta_{g,\beta})
=\theta_{gf,\beta(f)\circ\alpha}.$

The sheaf $F$ is \textit{Cartesian} if $\theta_{f,\alpha}$ is an
isomorphism, for every $(f,\alpha).$ By abuse of notation, we will
also denote ``$F_{U,u}$" and ``$\theta_{f, \alpha}$" by ``$F_U$" and
``$\theta_f$" respectively, if there is no confusion about the
reference to $u$ and $\alpha.$

This topos is equivalent to the ``lisse-\'etale" topos
$\fk X_{\text{lis-\'et}}$ associated to the site
$\text{Lis-\'et}(\fk X)$ with the same underlying
category as that of $\text{Lis-an}(\fk X),$ but the
open coverings are surjective families of local
isomorphisms. This is because the two topologies are
cofinal: for a local isomorphism $V\to U$ of analytic
spaces, there exists an open covering $\{V_i\subset
V\}_i$ of $V$ by analytic subspaces, such that for each
$i,$ the composition $V_i\subset V\to U$ is an open immersion.
\end{anitem}

\begin{anitem}\label{cart-complex}
Let $C^{\bullet}$ be a complex of sheaves of abelian groups
in $\fk X_{\text{lis-an}}.$ For a morphism $f:U\to V$ in
$\text{Lis-an}(\fk X),$ we have $\theta^n_f:f^*C^n_V\to
C^n_U$ for each component $C^n,$ and these maps commute
with the differentials in $C^{\bullet}$ (by definition
of morphisms of sheaves), hence they give a chain map
$\theta_f^{\bullet}:f^*C^{\bullet}_V\to C^{\bullet}_U.$
If the cohomology sheaves $\s H^n(C^{\bullet})$ are all
Cartesian, then $\theta_f^{\bullet}$ is an quasi-isomorphism,
for every $f.$
\end{anitem}

\subsection{Locally constant sheaves and constructible
sheaves}

Let $R$ be a commutative ring with identity.
For a sheaf of sets (resp.\ a sheaf of $R$-modules) on
the analytic site of an analytic space,
we say that the sheaf is \textit{locally constant
constructible}, abbreviated as \textit{lcc}, if it is
locally constant with respect to the analytic topology,
and stalks are finite sets (resp.\ finitely generated
$R$-modules).

Let $\fk X$ be an analytic stack. For a Cartesian
sheaf $F\in\fk X_{\text{lis-an}},$ we say that $F$
is \textit{locally constant} (resp.\ \textit{lcc}) if the
conditions in the following (\ref{analytic-9.1}) hold. The following lemma is an analytic version of (\cite{Ols3}, 9.1).

\begin{sublemma}\label{analytic-9.1}
Let $F\in\fk X_{\emph{lis-an}}$ be a Cartesian
sheaf. Then the following are equivalent.

\emph{(i)} For every $(U,u)\in\emph{Lis-an}(\fk X),$ the
sheaf $F_U$ is locally constant (resp.\ lcc).

\emph{(ii)} There exists an analytic presentation $P:X\to
\fk X$ such that $F_X$ is locally constant (resp.\ lcc).

The same statement holds for a Cartesian sheaf $F$ of
$R$-modules.
\end{sublemma}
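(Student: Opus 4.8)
The plan is to run the usual smooth-descent argument; $(i)\Rightarrow(ii)$ is immediate, since an analytic stack admits an analytic presentation by definition (\ref{analytic-stack}(ii)), and then $F_X$ is one of the sheaves $F_U$ appearing in $(i)$. So the content is $(ii)\Rightarrow(i)$.

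For $(ii)\Rightarrow(i)$, fix an analytic presentation $P:X\to\fk X$ with $F_X$ locally constant (resp.\ lcc), and let $(U,u)\in\text{Lis-an}(\fk X)$ be arbitrary. Since the diagonal of $\fk X$ is representable by analytic spaces, the fiber product $W:=X\times_{\fk X}U$ is an analytic space, of finite dimension by our standing convention. Let $p:W\to X$ and $q:W\to U$ be the two projections. As the base change of a smooth morphism is smooth, $p$ (a base change of $u$) and $q$ (a base change of $P$) are smooth morphisms of analytic spaces, and $q$ is surjective because $P$ is. In particular $W\to\fk X$ is smooth, so $(W,W\to\fk X)$ lies in $\text{Lis-an}(\fk X)$ and $p,q$ are morphisms in this site.

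Now I would use the Cartesian hypothesis: the transition maps $\theta_p:p^*F_X\to F_W$ and $\theta_q:q^*F_U\to F_W$ are isomorphisms, so $q^*F_U\simeq p^*F_X$. Since being locally constant (resp.\ lcc) is preserved under arbitrary pullback of sheaves on analytic spaces, $q^*F_U\simeq F_W$ is locally constant (resp.\ lcc) on $W$. It remains to descend this property along the smooth surjection $q$, where the analytic setting is more elementary than the \'etale one: by the local structure of smooth morphisms of analytic spaces, $q$ admits local sections. Given $u_0\in U$, choose $w\in q^{-1}(u_0)$ (possible since $q$ is surjective); there are opens $w\in\widetilde U\subset W$ and $u_0\in V\subset U$ with $q(\widetilde U)=V$ and $q|_{\widetilde U}$ isomorphic over $V$ to $\mathrm{pr}_1:V\times Z\to V$, which has the obvious section $s:V\to\widetilde U\subset W$. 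Then $q\circ s$ is the open immersion $V\hookrightarrow U$, hence $F_U|_V\simeq s^*q^*F_U\simeq s^*F_W$ is locally constant; and when $F_W$ is lcc, the stalk $(F_U)_v\simeq(F_W)_{s(v)}$ is finite (resp.\ finitely generated) for $v\in V$, so $F_U|_V$ is lcc. As the opens $V$ so obtained cover $U$, $F_U$ is locally constant (resp.\ lcc). The proof for Cartesian sheaves of $R$-modules is word for word the same.

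I expect the only delicate points to be bookkeeping: that $X\times_{\fk X}U$ genuinely belongs to the lisse-analytic site, which is where representability of the diagonal and the finite-dimensionality convention are used; and the descent step, i.e.\ that local constancy and finiteness of stalks can be tested after pulling back along the smooth surjection $q$ --- this is clean here precisely because smooth morphisms of analytic spaces have local sections, so no honest stack-theoretic descent (as in Olsson's \'etale argument) is needed.
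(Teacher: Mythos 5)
Your proposal is correct and follows essentially the same route as the paper: form the fiber product $X\times_{\fk X}U$, use the Cartesian property to identify its restriction of $F$ with a pullback of the locally constant sheaf $F_X$, and then pull back along local sections of the smooth surjection $X\times_{\fk X}U\to U$ over an open cover of $U$. Your write-up just spells out the bookkeeping (representability of the fiber product, membership in the site, stalk comparison) that the paper leaves implicit.
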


\begin{proof}
We only need to show that (ii) implies (i). There exists an open
covering $U=\cup U_i,$ such that over each $U_i,$ the smooth
surjection $X\times_{P,\fk X,u}U\to U$ has a section $s_i:$
$$
\xymatrix@C=.6cm @R=.8cm{
& X\times_{\fk X}U \ar[r] \ar[d] & X \ar[d]^-P \\
U_i \ar@{^{(}->}[r] \ar@{-->}[ur]^-{s_i} & U
\ar[r]^-u & \fk X.}
$$
Therefore $F_{U_i}\simeq s_i^{-1}F_{X\times_{\fk
X}U},$ which is locally constant (resp.\ lcc).
\end{proof}

\begin{anitem}\label{analytification}
Let $\mathcal X$ be a complex algebraic stack. Following
(\cite{Noo}, 20), one can define its \textit{associated analytic
stack} $\mathcal X^{\text{an}}$ as follows. If
$X_1\rightrightarrows X_0\to\mathcal X$ is a smooth
groupoid presentation, then $\mathcal X^{\text{an}}$ is
defined to be the analytic stack given by the
presentation $X_1^{\text{an}}\rightrightarrows
X_0^{\text{an}},$ and it can be proved that this is
independent of the choice of the presentation, up to an
isomorphism that is unique up to 2-isomorphism.
Similarly, for a morphism $f:\mathcal X\to\mathcal Y$ of
complex algebraic stacks, one can choose their
presentations so that $f$ lifts to a morphism of
groupoids, hence induces a morphism of their
analytifications, denoted $f^{\text{an}}:\mathcal
X^{\text{an}}\to\mathcal Y^{\text{an}}.$ The
analytification functor preserves finite 2-fiber products.

Sometimes we write $\mathcal X(\bb C)$ for the analytification
$\mathcal X^{\text{an}}$ or the associated lisse-analytic topos.
For a $\bb C$-algebraic space $X,$ we denote by $X(\bb C)$ the
analytification or the associated analytic topos. There is a
possible confusion which will not occur in the sequel:
for an analytic space $X,$ these two topoi are not the same.
The restriction functor defines an equivalence from
Cartesian sheaves $X_{\text{lis-an, cart}}$ to $X_{\text{an}}.$
\end{anitem}

\begin{anitem}\label{alg-constr}
Let $\fk X=\mathcal X^{\text{an}}$ for a complex
algebraic stack $\mathcal X,$ and let $P:X\to\mathcal X$
be a presentation. Let $R$ be a commutative ring with identity.
For a sheaf $F$ of sets (resp.\ $R$-modules) on $\fk
X_{\text{lis-an}},$ we say that $F$ is \textit{algebraically
constructible} (or just \textit{constructible}), if it is
Cartesian, and that for every $(U,u)\in\text{Lis-\'et}(\mathcal X),$
the sheaf $F_{U(\bb C)}$ is constructible, i.e.\ lcc on each
stratum in an \textit{algebraic} stratification of the
analytic space $U(\bb C).$ In the following, when
there seems to be a confusion about the coefficient ring
$R,$ we will mention it explicitly.

One could also define a notion of \textit{analytic
constructibility}, using analytic stratifications rather
than algebraic ones, but this notion will not give us a
comparison between the constructible derived categories
of the lisse-\'etale topos and of the lisse-analytic
topos.

The notion of constructible sheaves (and some variants) on
complex analytic spaces are defined in (\cite{Dim}, 4.1).
\end{anitem}

\begin{sublemma}\label{L-analytic-constr}
Let $F$ be a Cartesian sheaf of sets (resp.\ $R$-modules)
on $\fk X_{\emph{lis-an}}$, and let $P:X\to\mathcal X$ be a presentation as above. Then the following are equivalent.

\emph{(i)} $F$ is constructible.

\emph{(ii)} $F_{X(\bb C)}$ is constructible on
$X(\bb C)$ (in the algebraic sense above).

\emph{(iii)} There exists an algebraic stratification $\s S$ on $\fk
X,$ such that for each stratum $V(\bb C),$ the sheaf $F_{V(\bb
C)}$ is lcc.
\end{sublemma}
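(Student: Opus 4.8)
The plan is to prove the cycle of implications (i)$\Rightarrow$(ii)$\Rightarrow$(iii)$\Rightarrow$(i), using (\ref{analytic-9.1}) throughout in order to pass freely between the assertion that a Cartesian sheaf is lcc on a substack and the assertion that its restriction to one (equivalently, to every) presentation of that substack is lcc. The implication (i)$\Rightarrow$(ii) is immediate: $P:X\to\mathcal X$ makes $(X,P)$ an object of $\text{Lis-\'et}(\mathcal X),$ so (ii) is just the instance of the defining condition in (\ref{alg-constr}) for this object.

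For (iii)$\Rightarrow$(i), the sheaf $F$ is Cartesian by hypothesis, so I only need to check that $F_{U(\bb C)}$ is algebraically constructible for each $(U,u)\in\text{Lis-\'et}(\mathcal X).$ I would pull back the stratification $\s S$ to $U$: the locally closed subschemes $u^{-1}(V)=U\times_{\mathcal X}V$ (for $V\in\s S$) form an algebraic stratification of $U,$ and each projection $u^{-1}(V)\to V$ is smooth, so $(u^{-1}(V),\cdot)$ is an object of $\text{Lis-an}(V).$ Since $F|_{V^{\text{an}}}$ is lcc, (\ref{analytic-9.1}) applied to the stack $V$ shows $F_{u^{-1}(V)(\bb C)}$ is lcc; hence $F_{U(\bb C)}$ is lcc on every stratum of the algebraic stratification $\{u^{-1}(V)(\bb C)\}_V$ of $U(\bb C),$ which is exactly what algebraic constructibility requires.

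The substance lies in (ii)$\Rightarrow$(iii). I would argue by Noetherian induction on reduced closed substacks of $\mathcal X,$ so that it suffices to produce a dense open substack $\mathcal U\subset\mathcal X_{\mathrm{red}}$ with $F|_{\mathcal U^{\text{an}}}$ lcc (then apply the same to the closed complement, whose chosen presentation is a closed subscheme of $X$ on which $F$ restricts to an algebraically constructible Cartesian sheaf, and finally refine the resulting finite family into an honest stratification, which does not disturb the lcc property on the strata). To build $\mathcal U,$ assume $X$ reduced; since $F_{X(\bb C)}$ is algebraically constructible, the locus $X^{\circ}\subset X$ on which it is lcc is a dense Zariski-open subscheme. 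Writing $R=X\times_{\mathcal X}X$ with its two smooth surjections $s,t:R\to X,$ so that $\mathcal X=[X/R],$ the key point is that $X^{\circ}$ is $R$-invariant, i.e.\ $s^{-1}(X^{\circ})=t^{-1}(X^{\circ}).$ Indeed, since $s$ and $t$ become submersions after analytification, the analytic locus where $s^{-1}F_{X(\bb C)}$ is lcc equals $s^{-1}(X^{\circ}(\bb C))$ and that of $t^{-1}F_{X(\bb C)}$ equals $t^{-1}(X^{\circ}(\bb C))$; as $F$ is Cartesian these two pullbacks are isomorphic on $R(\bb C),$ so the two loci coincide. Hence $X^{\circ}=P^{-1}(\mathcal U)$ for a dense open substack $\mathcal U\subset\mathcal X,$ the map $P^{-1}(\mathcal U)\to\mathcal U$ is a presentation, $F_{P^{-1}(\mathcal U)(\bb C)}=F_{X(\bb C)}|_{X^{\circ}(\bb C)}$ is lcc, and (\ref{analytic-9.1}) gives that $F|_{\mathcal U^{\text{an}}}$ is lcc.

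The hard part will be exactly this $R$-invariance of the lcc-locus: the Cartesian hypothesis has to enter in an essential way, since without it the statement is simply false (the $R$-saturation of a nowhere-dense closed subscheme under a smooth groupoid can be dense, e.g.\ for $[\mathbb{A}^1/\mathbb{G}_a]$), and one must be slightly careful that the lcc-locus, which is visible a priori only in the analytic topology, is genuinely the analytification of a Zariski-open subscheme of $X$ (if this is not quoted as standard, one can instead take the largest Zariski-open contained in the analytic lcc-locus and check directly that it is $R$-invariant, using that $s$ and $t$ are Zariski-open maps). Granting that, together with the standard fact that any finite partition into locally closed substacks can be refined to a stratification in the required sense, the remaining steps are routine bookkeeping.
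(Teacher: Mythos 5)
Your (i)$\Rightarrow$(ii) and (iii)$\Rightarrow$(i) coincide with the paper's proof, but for the substantive implication (ii)$\Rightarrow$(iii) you take a genuinely different, and considerably more laborious, route. The paper also runs a noetherian induction, but it never needs the $R$-invariant open you construct: it picks an algebraic stratification $\s S_X$ of $X$ trivializing $F_{X(\bb C)}$, takes an open stratum $U$, and sets $V:=P(U)$, which is an open substack of $\mathcal X$ because $P$ is smooth, hence open. The key observation is that $P_U:U\to V$ is \emph{itself} a presentation of $V$, so Lemma \ref{analytic-9.1} --- whose condition (ii) only asks for the Cartesian sheaf to be lcc on \emph{some} presentation, not on a saturated open of a given one --- immediately yields that $F|_{V^{\mathrm{an}}}$ is lcc; one then inducts on $\mathcal X-V$ with presentation $X-P^{-1}(V)$ (which may well cut into other strata, harmlessly). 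This sidesteps exactly the two points you single out as delicate: whether the analytic lcc locus of $F_{X(\bb C)}$ is Zariski open, and the $R$-invariance of its Zariski-open core. Your treatment of those points is in fact correct --- pullback along a submersion takes the lcc locus to precisely its preimage, Cartesianness identifies $s^{\mathrm{an},*}F_{X(\bb C)}$ with $t^{\mathrm{an},*}F_{X(\bb C)}$ on $R(\bb C)$, and then $t(s^{-1}(X^{\circ}))$ is a Zariski open whose $\bb C$-points land in the analytic lcc locus, so maximality forces invariance --- and your argument even buys something the paper's does not, namely a canonical maximal dense open substack of $\mathcal X_{\mathrm{red}}$ over which $F$ is lcc. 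But none of that machinery (descent of invariant opens along the groupoid, openness of $s,t$, density of the descended substack) is needed for the statement; the paper's choice $V=P(U)$ lets the Cartesian hypothesis enter only through Lemma \ref{analytic-9.1}, exactly as in your other two implications, and keeps the induction step to two lines.
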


\begin{proof}
(i)$\Rightarrow$(ii) is clear.

(ii)$\Rightarrow$(iii). Let $\s S_X$ be a
stratification of the scheme $X,$ such that for each
$U\in\s S_X,$ the sheaf $F_{U(\bb C)}$ is lcc.
Let $U$ be an open stratum in $\s S_X,$ and let $V$ be the image of
$U$ under the map $P;$ then $V$ is an open substack of
$\mathcal X,$ and $P_U:U\to V$ is a presentation.
By (\ref{analytic-9.1}) we see that $F_{V^{\text{an}}}$
is lcc. Since $X-P^{-1}(V)\to\mathcal X-V$ gives an algebraic
presentation of $(\mathcal X-V)^{\text{an}}=\fk
X-V^{\text{an}},$ and
$$
(F|_{\fk X-V^{\text{an}}})_{(X-P^{-1}(V))^{\text
{an}}}\simeq F_{X^{\text{an}}}|_{(X-P^{-1}(V))^{\text{an}}}
$$
is still constructible, by noetherian induction we are done.

(iii)$\Rightarrow$(i). Let $(U,u)\in\text{Lis-\'et}(
\mathcal X).$ Then $u^{\text{an},*}\s S$
is an algebraic stratification of $U(\bb C),$ and it
is clear that $F_{U(\bb C)}$ is lcc on each stratum of
this stratification.
\end{proof}

\begin{anitem}\label{constr=serre}
Assume the ring $R$ is noetherian. Then the
constructible $R$-modules on $\fk X_{\text{lis-an}}$ form a
full subcategory $\text{Mod}_c(\fk X,R)$ of
$\text{Mod}(\fk X,R)$ that is closed under
taking kernels, cokernels and extensions (i.e.\
it is a Serre subcategory). To see this, we first
show that Cartesian sheaves form a Serre subcategory.

Let $(f,\alpha):(U,u)\to(V,v)$ be a morphism in
$\text{Lis-an}(\fk X).$ The functor
$f^*:\text{Mod}(V_{\text{an}},R)\to
\text{Mod}(U_{\text{an}},R)$ is exact, because
$f^*F=R_U\otimes_{f^{-1}R_V}f^{-1}F
=f^{-1}F.$ Let $a:F\to G$ be a morphism of Cartesian
sheaves. Then $\text{Ker}(f^*a_V:f^*F_V\to f^*G_V)=f^*
\text{Ker}(a_V),$ and it is clear that the induced
morphism $f^*\text{Ker}(a_V)\to\text{Ker}(a_U)$ is an
isomorphism:
$$
\xymatrix@C=.8cm @R=.7cm{
f^*\text{Ker}(a_V) \ar[r] \ar[d] & f^*F_V
\ar[r]^-{f^*a_V} \ar[d]^-{\sim} & f^*G_V \ar[d]^-{\sim}
\\
\text{Ker}(a_U) \ar[r] & F_U \ar[r]^-{a_U} & G_U.}
$$
The proof for cokernels and extensions (using 5-lemma)
is similar. One can also mimic the proof in (\cite{Ols3},
3.8, 3.9) to prove a similar statement for analytic stacks,
in the more general setting where the coefficient ring is
a \textit{flat sheaf}. In this paper, we will only need
the case of a constant coefficient ring.

Then by (\ref{L-analytic-constr} iii), it suffices to show
that lcc $R$-modules form a Serre subcategory. This follows
from (\cite{SGA4}, IX, 2.1).
\end{anitem}

\subsection{Derived categories}\label{analytic-derived}

\begin{anitem}\label{LO2-analytic}
Again let $\fk X=\mathcal X^{\text{an}}.$ We follow
\cite{LO2} and define the \textit{derived category
$D_c(\fk X_{\emph{lis-an}},\Lambda)$
of constructible $\Lambda$-adic sheaves} (by abuse of
language, as usual) as follows.

A complex of projective systems $M$ in the ordinary derived
category $\s D(\fk X_{\text{lis-an}}^{\bb N},\Lambda_{\bullet})$
of the simplicial topos $\fk X_{\text{lis-an}}^{\bb N}$ ringed
by $\Lambda_{\bullet}=(\Lambda_n)_n,$ is called a
\textit{$\lambda$-complex} if for every $i$ and $n,$ the
sheaf $\s H^i(M_n)$ is constructible and the cohomology
system $\s H^i(M)$ is AR-adic. A \textit{$\lambda$-module}
is a $\lambda$-complex concentrated in degree 0, i.e.
the $\s H^i$'s are AR-null for $i\ne0.$ Then we define
$D_c(\fk X_{\text{lis-an}},\Lambda)$ to be the quotient
of the full subcategory $\s D_c(\fk X_{\text{lis-an}}^{\bb
N},\Lambda_{\bullet})$ of $\lambda$-complexes by the full
subcategory of AR-null complexes
(i.e. those with AR-null cohomology systems).

This quotient inherits a standard $t$-structure, and we define
the category $\Lambda\text{-Sh}_c(\fk X)$ of
\textit{constructible $\Lambda$-adic sheaves} on
$\fk X_{\text{lis-an}}$ to be its core, namely the
quotient of the category of $\lambda$-modules by
the thick full subcategory of AR-null systems. By
(\cite{SGA5}, p.234), this is equivalent to the category
of adic systems, i.e.\ those projective systems
$F=(F_n)_n,$ such that for each $n,\ F_n$ is a
constructible $\Lambda_n$-module on $\fk
X_{\text{lis-an}},$ and the induced morphism
$F_n\otimes_{\Lambda_n}\Lambda_{n-1}\to F_{n-1}$ is an
isomorphism.

Passing to localizations and 2-colimits, one can
also define the categories $D_c(\fk X_{\text{lis-an}},
E_{\lambda})$ and $D_c(\fk X_{\text{lis-an}},\overline{\bb
Q}_{\ell}),$ as well as their cores with respect to
the standard $t$-structures: the
categories of constructible $E_{\lambda}$- and $\overline{\bb
Q}_{\ell}$-sheaves on $\fk X_{\text{lis-an}}.$
\end{anitem}

\begin{anitem}\label{cart-simplicial}
Let $X_{\bullet}$ be a strictly simplicial analytic
space. Then we have the subcategory $\text{Ab}
_{\text{cart}}(X_{\bullet})$ of \textit{Cartesian
abelian sheaves on $X_{\bullet}$} in $\text{Ab}(X_{\bullet}),$
as defined in (\cite{LMB}, 12.4.2). It can be proved
in the same way as in (\ref{constr=serre}) that this
is a Serre subcategory, which enables us to
define the triangulated subcategory $\s D_{\text{cart}}
(X_{\bullet},\bb Z)$ of the ordinary derived category $\s
D(X_{\bullet},\bb Z),$ consisting of complexes with
Cartesian cohomology sheaves.

If $\fk X$ is an analytic stack and $X_{\bullet}\to\fk
X$ is a strictly simplicial hypercover of $\fk X$ by
analytic spaces, one can also consider the localized
topos $\fk X_{\text{lis-an}}|_{X_{\bullet}}$ (cf.\
(\cite{SGA4}, IV, 5)) and have the notion of \textit
{Cartesian sheaves} on it. Then $\text{Ab}_{\text
{cart}}(\fk X|_{X_{\bullet}})\subset\text{Ab}(\fk
X|_{X_{\bullet}})$ is a Serre subcategory, and
we may define the triangulated subcategory $\s
D_{\text{cart}}(\fk X|_{X_{\bullet}},\bb Z)\subset
\s D(\fk X|_{X_{\bullet}},\bb Z).$ These constructions
apply as well to a general coefficient ring $R$ in place
of $\bb Z.$

Now let $X_{\bullet}$ be a strictly simplicial $\bb
C$-algebraic space and $R$ be a noetherian ring. A sheaf $F$ of sets (resp.\
$R$-modules) on $X_{\bullet}(\bb C)$ is said to be
\textit{constructible} if it is Cartesian and
all components $F_n$ on $X_n(\bb C)$ are constructible.
Constructible $R$-modules on $X_{\bullet}(\bb C)$
form a Serre subcategory, and one can define the triangulated
subcategory $\s D_c(X_{\bullet}(\bb C),R)$ consisting
of complex with constructible cohomology sheaves.
When $X_{\bullet}\to\cal X$ is a strictly simplicial
hypercover of a complex algebraic stack $\cal X$ and
$\fk X=\mathcal X^{\text{an}},$ we also have $\s
D_c(\fk X|_{X_{\bullet}(\bb C)},R).$
Following (\cite{LO2}, 10.0.6), we define the adic derived
category $D_c(X_{\bullet}(\bb C),\Lambda)$ as follows.

A sheaf $F\in\text{Mod}(X_{\bullet}(\bb C)^{\bb N},
\Lambda_{\bullet})$ is \textit{AR-adic} if it is Cartesian
(i.e.\ each $F_n\in\text{Mod}(X_{\bullet}(\bb C),\Lambda_n)$
is Cartesian), and $F|_{X_i^{\bb N}}$ is AR-adic
for every $i.$ A complex $C\in\s D(X_{\bullet}(\bb C)
^{\bb N},\Lambda_{\bullet})$ is a \textit
{$\lambda$-complex} (resp.\ an \textit{AR-null complex})
if the cohomology sheaf $\s H^i(C)$ is AR-adic and
$\s H^i(C_m)|_{X_n(\bb C)}\in\text{Mod}(X_n(\bb C),\Lambda_m)$ is constructible,
for every $i,m,n$ (resp.\ $C|_{X_n(\bb C)}$ is
AR-null, for every $n$). Finally we define
$D_c(X_{\bullet}(\bb C),\Lambda)$ to be the quotient
of the full subcategory $\s D_c(X_{\bullet}(\bb C)
^{\bb N},\Lambda_{\bullet})\subset\s
D(X_{\bullet}(\bb C)^{\bb N},\Lambda_{\bullet})$
consisting of all $\lambda$-complexes by the full
subcategory of AR-null complexes.
\end{anitem}

\begin{anitem}\label{complex-derived}
Let $R$ be a noetherian ring as before. Then we may also define
a topological constructible derived category as follows.
Let $\s D(\fk X_{\text{lis-an}},R)$ be the ordinary
derived category of sheaves of $R$-modules on $\fk X_{\text{lis-an}}$.
Then (\ref{constr=serre}) allows
us to define its triangulated subcategories
$$
\s D_c(\fk X_{\text{lis-an}},R)\subset\s D_{\text
{cart}}(\fk X_{\text{lis-an}},R),
$$
consisting of complexes with constructible
cohomology sheaves and Cartesian cohomology sheaves,
respectively. The cores of the standard $t$-structures
on them are $\text{Mod}_c(\fk X,R)$ and $\text{Mod}
_{\text{cart}}(\fk X,R)$ respectively. The main
examples we have in mind of the ring $R$ for this topological
setting are $\Lambda,\bb Q,\bb C$ and $\overline{\bb Q}_{\ell}.$

In particular, when $R=\Lambda,$ to emphasize the difference
from the category $\Lambda\text{-Sh}_c(\fk X)$ of
$\Lambda$-adic sheaves on $\fk X,$ we will often denote
by $\fk{Mod}_c(\Lambda_{\fk X})$ the category of
constructible $\Lambda_{\fk X}$-modules. In (\ref{agree}),
we will show that the two categories
$D_c(\fk X_{\text{lis-an}},\Lambda)$ and $\s
D_c(\fk X_{\text{lis-an}},\Lambda)$ are equivalent.

For simplicity, we will drop ``lis-an" in
$\s D_c(\fk X_{\text{lis-an}},R),$ if there is no
confusion. Also we will drop ``lis-\'et" in $D_c(\mathcal
X_{\text{lis-\'et}},R).$
\end{anitem}

\subsection{Comparison between the derived categories of
lisse-\'etale and lisse-analytic topoi}\label{compar}

Given an algebraic stack $\mathcal X/\bb C,$ let
$\fk X=\mathcal X^{\text{an}},$ and let $P:X\to
\mathcal X$ be a presentation, with analytification
$P^{\text{an}}:X(\bb C)\to\fk X.$ Let
$\epsilon:X_{\bullet}\to\mathcal X$ be the associated
strictly simplicial smooth hypercover, and let
$\epsilon^{\text{an}}:X_{\bullet}(\bb C)\to\fk
X$ be the analytification. They induce morphisms of
topoi, denoted by the same symbol. Consider the following
morphisms of topoi:
$$
\xymatrix@C=.8cm @R=.8cm{
\fk X_{\text{lis-an}} & \fk X_{\text{lis-an}}
|_{X_{\bullet}(\bb C)} \ar[r]^-{\delta^{\text
{an}}_{\bullet}} \ar[l]_-{\gamma^{\text{an}}} &
X_{\bullet}(\bb C) \ar[d]^-{\xi_{\bullet}}
\ar@/^1.7pc/[ll]_-{\epsilon^{\text{an}}} \\
\mathcal X_{\text{lis-\'et}} & \mathcal
X_{\text{lis-\'et}}|_{X_{\bullet}}
\ar[r]^-{\delta_{\bullet}} \ar[l]_-{\gamma} &
X_{\bullet,\text{\'et}} \ar@/^1.7pc/[ll]_-{\epsilon}.}
$$
We will show that
$R\epsilon_*\circ R\xi_{\bullet,*}\circ\epsilon^{\text
{an},*}$ gives an equivalence between $D_c(\fk
X,\Lambda)$ and $D_c(\mathcal X,\Lambda),$ and that it is
compatible with pushforwards. It is proved in
(\cite{LO1}, 2.2.6) that, $(\epsilon^*,R\epsilon_*)$
induce an equivalence between the triangulated categories
$\s D_c(\mathcal X,\Lambda_n)$ and $\s D_c(X_{\bullet},
\Lambda_n).$ As in (\cite{LO2}, 10.0.8), this gives an
equivalence between $D_c(\mathcal X,\Lambda)$ and
$D_c(X_{\bullet,\text{\'et}},\Lambda).$
We mimic the proof there and prove the analytic
analogue.

\begin{subproposition}\label{P-compar}
\emph{(i)} Let $R$ be a noetherian commutative ring with identity.
Then the pairs of functors $(\epsilon^{\emph{an},*},
R\epsilon_*^{\emph{an}}),(\delta^{\emph{an},*}_{\bullet},
R\delta^{\emph{an}}_{\bullet,*})$ and $(\gamma^{\emph{an},*},
R\gamma^{\emph{an}}_*)$ induce equivalences of triangulated
categories
$$
\xymatrix@C=.5cm @R=.5cm{
& \s D_c(\fk X|_{X_{\bullet}(\bb C)},R)
\ar@{<->}[rd] \ar@{<->}[ld] & \\
\s D_c(\fk X,R) \ar@{<->}[rr] && \s D_c(X_{\bullet}(\bb C),R)}
$$
which is commutative, as well as an equivalence
$$
\xymatrix@C=1cm{
D_c(\fk X,\Lambda) \ar@{<->}[r] &
D_c(X_{\bullet}(\bb C),\Lambda).}
$$

\emph{(ii)} Let $X$ be a $\bb C$-algebraic space, and let
$\xi=\xi_X:X(\bb C)\to X_{\emph{\'et}}$ be the natural
morphism of topoi. Then $R\xi_*$ is defined on the
unbounded constructible derived category, and the functors
$(\xi^*,R\xi_*)$ induce an equivalence between
$D_c(X(\bb C),\Lambda)$ and $D_c(X,\Lambda).$

\emph{(iii)} Let $f:X\to Y$ be a morphism of $\bb C$-algebraic
spaces, and let $\xi_X,\xi_Y$ be as in (ii). Then for
every $F\in D_c^+(X,\Lambda),$ the natural morphism
$$
\xi_Y^*f_*F\to f^{\emph{an}}_*(\xi_X^*F)
$$
is an isomorphism. Recall (\ref{nota-derived}) that $f_*$
and $f_*^{\emph{an}}$ here are derived functors $Rf_*$ and
$Rf_*^{\emph{an}},$ respectively.
\end{subproposition}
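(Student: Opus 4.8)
The plan is to treat the three parts in order, building up from the simplest case. For part (i), the strategy is to mimic the proof of (\cite{LO1}, 2.2.6) and its $\lambda$-adic upgrade in (\cite{LO2}, 10.0.8), replacing the \'etale topology everywhere by the analytic topology. The key observation is that the localization morphism $\gamma^{\text{an}}:\fk X_{\text{lis-an}}|_{X_{\bullet}(\bb C)}\to\fk X_{\text{lis-an}}$ is, as in the algebraic case, the restriction of the lisse-analytic topos to a covering sieve-like object; since $X_{\bullet}(\bb C)\to\fk X$ is a hypercover, cohomological descent gives that $(\gamma^{\text{an},*},R\gamma^{\text{an}}_*)$ is an equivalence on the cartesian (hence constructible) derived categories. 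The morphism $\delta^{\text{an}}_{\bullet}$ compares the localized lisse-analytic topos with the simplicial analytic topos $X_{\bullet}(\bb C)$; as in (\cite{LMB}, 12.4) and (\cite{Ols3}), a sheaf on the localized topos is cartesian iff it comes from a cartesian sheaf on $X_{\bullet}(\bb C)$, and one checks $R\delta^{\text{an}}_{\bullet,*}$ computes cohomology degreewise, giving the second equivalence. Composing yields the equivalence $\epsilon^{\text{an}}$, and commutativity of the triangle is formal. For the $\lambda$-adic statement, one passes to the simplicial topos $(\ )^{\bb N}$ ringed by $\Lambda_{\bullet}$, notes that each of the three morphisms of topoi respects AR-adic and AR-null objects (because they are checked degreewise, which is exactly how the definitions in (\ref{cart-simplicial}) are set up), and hence descends to the quotient categories $D_c(-,\Lambda)$.

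For part (ii), the main point is to upgrade the classical comparison theorem of (\cite{SGA4}, XVI) --- which says $\xi^*$ is an equivalence on torsion constructible sheaves and $R\xi_*$ computes the right thing on bounded-below complexes --- to the \emph{unbounded} $\lambda$-adic derived category. First I would recall that for a single $\bb C$-algebraic space $X$ of finite type, the comparison equivalence $\s D_c^+(X(\bb C),\Lambda_n)\simeq\s D_c^+(X,\Lambda_n)$ holds (Artin comparison), and that $R\xi_*$ has finite cohomological dimension on constructible sheaves because $X$ is finite-dimensional (our standing assumption rules out the pathology that would otherwise obstruct the unbounded statement). Finite cohomological dimension lets one extend $R\xi_*$ to the unbounded derived category by the usual Spaltenstein-style argument, and it commutes with the formation of $\Lambda$-adic limits, so one gets the equivalence $D_c(X(\bb C),\Lambda)\simeq D_c(X,\Lambda)$ after passing through $\lambda$-complexes modulo AR-null complexes, exactly as (\cite{LO2}, 3.0.x) handles the algebraic side.

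Part (iii) is where the real work is. The natural morphism $\xi_Y^*f_*F\to f^{\text{an}}_*\xi_X^*F$ is the base change morphism attached to the square relating $\xi_X,\xi_Y$ and $f,f^{\text{an}}$; I would prove it is an isomorphism by reduction to the classical proper/finite-dimensional comparison theorem. The hard part will be handling the case of non-proper $f$: for proper $f$ this is just (\cite{SGA4}, XVI, 4.1) applied level-by-level in the $\lambda$-adic system, but in general one must use the finiteness of cohomological dimension together with, e.g., compactifying $f$ (Nagata) or a descent argument, and one must check that everything is compatible with the projective limit defining $\Lambda$-adic sheaves. Concretely I would first verify the statement for $F=\Lambda_n$-sheaves and $f$ proper, then remove properness using a Nagata compactification $f=\bar f\circ j$ together with the comparison for open immersions $j$ (where $j_*$ on the analytic side agrees with $j_*$ on the \'etale side by the excision/local nature of both), then pass to the limit over $n$ using that $R\xi_*$ and $f_*,f^{\text{an}}_*$ all have bounded cohomological amplitude on constructibles so that the $\mathrm{R}\varprojlim$'s behave. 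The finite-dimensionality assumption on analytic spaces, imposed in \S\ref{sec-analytic}, is exactly what makes these cohomological-dimension bounds available.
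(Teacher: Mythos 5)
Your parts (i) and (ii) follow essentially the same route as the paper (descent for the hypercover via the gluing lemma of \cite{LO1}, 2.2.3 together with the cartesian-sheaf description of the localized topos, Artin's comparison plus the finite cohomological dimension of $R\xi_*$ coming from (\cite{Dim}, 3.1.7, 3.4.1), and then the observation that AR-adic and AR-null objects correspond), so the only comment there is that in (ii) you should not skip the reduction from algebraic spaces to schemes: the paper passes through a simplicial \'etale hypercover by schemes and applies (\cite{LO1}, 2.2.3), since Artin's theorem is stated for schemes.

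Part (iii), however, has a genuine gap. You read (\cite{SGA4}, XVI, 4.1) as covering only proper $f$ and propose to obtain the general case from a Nagata compactification $f=\bar f\circ j$, disposing of the open immersion $j$ by saying that $j_*$ on the analytic side agrees with $j_*$ on the \'etale side ``by the excision/local nature of both.'' That justification is not valid: both sides being local on the target does not make the base change map an isomorphism, and the comparison $\xi^*Rj_*F\simeq Rj^{\mathrm{an}}_*\xi^*F$ for an open immersion is precisely the hard content of Artin's theorem (its proof requires Artin's good neighborhoods / elementary fibrations, or resolution, to control the cohomology of punctured neighborhoods of the boundary --- already for $j:\bb G_m\hookrightarrow\bb A^1$ it is a nontrivial computation, and for general constructible coefficients in higher dimension it is exactly what XVI, 4.1 establishes). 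In fact the cited theorem already treats an arbitrary morphism of finite type $\bb C$-schemes with constructible torsion coefficients, so the Nagata detour is unnecessary; the paper's proof simply applies $\s H^i$, uses the finite cohomological dimension of normalization to reduce to a constructible $\Lambda_n$-sheaf, quotes XVI, 4.1 for schemes, and then --- this is the step your concrete plan omits --- reduces algebraic spaces to schemes by \'etale descent: smooth base change after passing to an \'etale presentation of $Y$, and for $X$ a simplicial \'etale hypercover $X_\bullet\to X$ by schemes together with the comparison of the two descent spectral sequences. As written, your argument for the non-proper (equivalently, open immersion) case would fail, and the algebraic-space case is not addressed.
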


\begin{proof}
(i) Firstly, note that the restriction
functor $\delta^{\text{an}}_{\bullet,*}:
\text{Ab}(\fk X_{\text{lis-an}}|_{X_{\bullet}(\bb
C)})\to\text{Ab}(X_{\bullet}(\bb C))$ is exact so that
$R\delta^{\text{an}}_{\bullet,*}=\delta^{\text{an}}_{\bullet,*},$
since the topologies are the same. We have $\epsilon^{
\text{an},*}\simeq\delta^{\text{an}}_{\bullet,*}\circ
\gamma^{\text{an},*},$ since they are all restrictions.
Therefore, it suffices to prove that $(\delta^{\text{an},*}_{\bullet},
R\delta^{\text{an}}_{\bullet,*})$ and $(\gamma^{\text{an},*},
R\gamma^{\text{an}}_*)$ induce equivalences of triangulated
categories.

For an abelian sheaf $F$ on $X_{\bullet}(\bb C),$ given by
$F_n\in\text{Ab}(X_n(\bb C))$ for each $n$ and the
transition map $\theta_a:a^*F_n\to F_m$ for each morphism
$a:m\to n$ in $\Delta^{+,\text{op}}$ (i.e.\ for each order-preserving injection $a:\{0,\cdots,n\}\to\{0,\cdots,m\}$), the sheaf
$\delta^{\text{an},*}_{\bullet}F$ assigns to the object
$$
\xymatrix@C=.7cm @R=.5cm{
U \ar[r]^-u \ar[rd] & X_n(\bb C) \ar[d] \\
& \fk X}
$$
the sheaf $u^*F_n$ on $U_{\text{an}},$ and to each
morphism
$$
\xymatrix@C=.7cm @R=.5cm{
U' \ar[r]^-{u'} \ar[d]_-{\varphi} & X_m(\bb C) \ar[d]^-a \\
U \ar[r]^-u \ar[rd] & X_n(\bb C) \ar[d] \\
& \fk X}
$$
the transition map $\theta_{a,\varphi}:$
$$
\xymatrix@C=.7cm @R=.5cm{
\varphi^*u^*F_n \ar[r]^-{\theta_{a,\varphi}}
\ar[d]_-{\sim} & u'^*F_m \\
u'^*a^*F_n \ar[ru]_-{u'^*\theta_a} &}.
$$
It is then clear that
$(\delta_{\bullet}^{\text{an},*},\delta_{\bullet,*}
^{\text{an}})$ induce equivalences of categories
$$
\text{Ab}_{\text{cart}}(\fk X|_{X_{\bullet}(\bb C)})
\longleftrightarrow\text{Ab}_{\text{cart}}(X_{\bullet}(\bb
C)),\qquad\text{Ab}_c(\fk X|_{X_{\bullet}(\bb C)})
\longleftrightarrow\text{Ab}_c(X_{\bullet}(\bb C)).
$$
For $F\in\s D_{\text{cart}}(X_{\bullet}(\bb C),\bb Z)$ and
$G\in\s D_{\text{cart}}(\fk X|_{X_{\bullet}(\bb C)},
\bb Z),$ we see that the adjunction and coadjunction
morphisms
$$
F\to\delta^{\text{an}}_{\bullet,*}\delta^{\text{an},*}
_{\bullet}F,\qquad
\delta^{\text{an},*}_{\bullet}\delta^{\text{an}}_{\bullet,*}G\to G
$$
are isomorphisms by applying $\s H^i.$ Hence
$(\delta_{\bullet}^{\text{an},*},\delta_{\bullet,*}
^{\text{an}})$ induce equivalences
$$
\s D_{\text{cart}}(\fk X|_{X_{\bullet}(\bb C)},\bb Z)
\longleftrightarrow\s D_{\text{cart}}(X_{\bullet}(\bb
C),\bb Z),\quad\s D_c(\fk X|_{X_{\bullet}(\bb C)},\bb Z)
\longleftrightarrow\s D_c(X_{\bullet}(\bb C),\bb Z),
$$
and also with $\bb Z$ replaced by any noetherian ring $R.$

To show that $\gamma^{\text{an},*}$ induces an equivalence
with coefficient $R,$ we will apply (\cite{LO1},
2.2.3). All the transition morphisms of topoi in the strictly
simplicial ringed topos $(\fk X_{\text{lis-an}}|_{X_{\bullet}
(\bb C)},R)$ as well as $\gamma^{\text{an}}:(\fk
X_{\text{lis-an}}|_{X_{\bullet}(\bb C)},R)\to(\fk
X_{\text{lis-an}},R)$ are flat. Let $\s C=\text{Mod}
_c(\fk X,R),$ which is a Serre subcategory of
$\text{Mod}(\fk X,R)$ by (\ref{constr=serre}), and let
$\s C_{\bullet}$ be the essential image of $\s C$ under
$\gamma^{\text{an},*}:\text{Mod}(\fk X,R)\to\text{Mod}(\fk
X|_{X_{\bullet}(\bb C)},R).$ We will see shortly that $\s
C_{\bullet}=\text{Mod}_c(\fk X|_{X_{\bullet}(\bb C)},R).$
To apply (\cite{LO1}, 2.2.3) we need to verify the
assumption (\cite{LO1}, 2.2.1), which has two parts:

$\bullet$ (\cite{LO1}, 2.1.7) for the ringed sites
$(\text{Lis-an}(\fk X)|_{X_i(\bb C)},R)$ with $\s C_i$
the essential image of $\s C$ under the restriction
$\text{Mod}(\fk X,R)\to\text{Mod}(\fk X|_{X_i(\bb C)},R).$
This means that, for every object $U$ in this site,
there exists an analytic open covering $U=\cup
U_{\alpha}$ and an integer $n_0,$ such that for every
$F\in\s C_i$ and $n\ge n_0,$ we have $H^n(U_{\alpha},F)=0.$
This follows from (\cite{Dim}, 3.1.7, 3.4.1).

$\bullet\ \gamma^{\text{an},*}:\s C\to\s C_{\bullet}$
is an equivalence with quasi-inverse $R\gamma^{\text{an}}_*.$
For $F\in\text{Mod}(\fk X,R),$ its image
$\gamma^{\text{an},*}F$ is the sheaf that assigns to the object
$$
\xymatrix@C=.7cm @R=.5cm{
U \ar[r]^-u \ar[rd] & X_n(\bb C) \ar[d] \\
& \fk X}
$$
the sheaf $u^*F_{X_n(\bb C)},$ and to each morphism
$$
\xymatrix@C=.7cm @R=.5cm{
U' \ar[r]^-{u'} \ar[d]_-{\varphi} & X_m(\bb C) \ar[d]^-a \\
U \ar[r]^-u \ar[rd] & X_n(\bb C) \ar[d] \\
& \fk X}
$$
the transition map $\theta_{a,\varphi}:$
$$
\xymatrix@C=.7cm @R=.5cm{
\varphi^*u^*F_{X_n(\bb C)} \ar[r]^-{\theta_{a,\varphi}}
\ar[d]_-{\sim} & u'^*F_{X_m(\bb C)} \\
u'^*a^*F_{X_n(\bb C)} \ar[ru]_-{u'^*\theta_a} &}.
$$
So it is clear that $\gamma^{\text{an},*}$ sends
Cartesian (resp.\ constructible) sheaves to Cartesian
(resp.\ constructible) sheaves. To verify this assumption,
we need the analytic version of (\cite{Ols3}, 4.4, 4.5),
which we state in the following for the reader's
convenience.

Let $\text{Des}(X(\bb C)/\fk X,R)$ be
the category of pairs $(F,\alpha),$ where
$F\in\text{Mod}(X(\bb C),R),$ and
$\alpha:p_1^*F\to p_2^*F$ is an isomorphism on
$X_1(\bb C)$ (where $p_1$
and $p_2$ are the natural projections $X_1(\bb C)
\rightrightarrows X_0(\bb C)=X(\bb C)$), such
that $p_{13}^*(\alpha)=p_{23}^*(\alpha)\circ
p_{12}^*(\alpha):\bar{p}_1^*F\to\bar{p}_3^*F$ on
$X_2(\bb C).$ Here $\bar{p}_i:X_2\to X_0$ are the
natural projections. There is a natural functor $A:\text
{Mod}_{\text{cart}}(\fk X,R)\to\text{Des}
(X(\bb C)/\fk X,R),$ sending $M$ to
$(F,\alpha),$ where $F=M_{X(\bb C)}$ and $\alpha$
is the composite
$$
\xymatrix@C=1cm{
p_1^*F \ar[r]^-{p_1^*} & M_{X_1(\bb C)} \ar[r]^-
{(p_2^*)^{-1}} & p_2^*F.}
$$
There is also a natural functor
$B:\text{Mod}_{\text{cart}}(X_{\bullet}(\bb C),
R)\to\text{Des}(X(\bb C)/\fk X,R)$ sending $F=(F_i)_i$ to
$(F_0,\alpha),$ where $\alpha$ is the composite
$$
\xymatrix@C=1cm{
p_1^*F_0 \ar[r]^-{\text{can}} & F_1
\ar[r]^-{\text{can}^{-1}} & p_2^*F_0,}
$$
and the cocycle condition is verified as in (\cite{Ols3},
4.5.4).

\begin{sublemma}\label{Ols3-4.4}
The natural functors in the diagram
$$
\xymatrix@C=.7cm @R=.8cm{
& \emph{Mod}_{\emph{cart}}(X_{\bullet}(\bb C),
R) \ar[dr]^-B & \\
\emph{Mod}_{\emph{cart}}(\fk X,R) \ar[rr]^-A
\ar[ur]^-{\emph{res}} && \emph{Des}(X(\bb
C)/\fk X,R)}
$$
are all equivalences, and the diagram is commutative up
to natural isomorphism.
\end{sublemma}

The proof in (\cite{Ols3}, 4.4, 4.5) carries verbatim
to analytic stacks. In particular, by (\ref{L-analytic-constr})
the restriction
$$
\text{Mod}_c(\fk X,R)\to\text{Mod}_c(X_{\bullet}(\bb C),R)
$$
is an equivalence.

Note that $\s
C_{\bullet}=\text{Mod}_c(\fk X|_{X_{\bullet}(\bb C)},R).$
Clearly every object in $\s C_{\bullet}$ is constructible.
Conversely, for any constructible $R$-module $G_{\bullet}$ on
$\fk X|_{X_{\bullet}(\bb C)},$ we have $G_{\bullet}\cong\delta_{\bullet}^{\text{an},*}F_{\bullet}$
where $F_{\bullet}$ is the restriction
$\delta_{\bullet,*}^{\text{an}}G_{\bullet}\in
\text{Mod}_c(X_{\bullet}(\bb C),R)$ of $G_{\bullet}.$ By
(\ref{Ols3-4.4}) we see that $F_{\bullet}$ is the
restriction of $F\in\text{Mod}_c(\fk X,R)$ for a unique
(up to isomorphism) constructible $R_{\fk X}$-module $F,$
therefore $G_{\bullet}\cong\gamma^{\text{an},*}
F$ from that $\delta^{\text{an}}_{\bullet,*}G_{\bullet}
\cong F_{\bullet}\cong\text{res}\ F=\delta^{
\text{an}}_{\bullet,*}(\gamma^{\text{an},*}F),$ and
hence $G_{\bullet}\in\s C_{\bullet}.$

From the 2-commutative diagram
$$
\xymatrix@C=.7cm @R=.8cm{
& \text{Mod}_c(\fk X|_{X_{\bullet}(\bb C)},
R) \ar[dr]^-{\delta^{\text{an}}_{\bullet,*}}_-{\sim} & \\
\text{Mod}_c(\fk X,R) \ar[ur]^-{\gamma^{\text{an},*}}
\ar[rr]^-{\text{res}}_-{\sim} && \text{Mod}_c(
X_{\bullet}(\bb C),R)}
$$
we see that $\gamma^{\text{an},*}:\s C\to\s C_{\bullet}$
is an equivalence.

By (\cite{LO1}, 2.2.3), the functors $(\gamma^{\text{an},
*},R\gamma^{\text{an}}_*)$ induce an equivalence
$$
\s D_c(\fk X,R)\longleftrightarrow\s D_c(\fk
X|_{X_{\bullet}(\bb C)},R).
$$

Note that for $M\in\s D_c(\fk X^{\bb
N},\Lambda_{\bullet})$ (resp.\ $\s D_c(X_{\bullet}(\bb
C)^{\bb N},\Lambda_{\bullet})$), each level $M_n$ is in
$\s D_c(\fk X,\Lambda_n)$ (resp.\ $\s D_c(X_{\bullet}(\bb C),
\Lambda_n)$), and the property of $M$ being
AR-adic (resp.\ AR-null) is intrinsic (\cite{SGA5}, V, 3.2.3).
So AR-adic (resp.\ AR-null) complexes on the two
sides correspond under this equivalence, and
we have equivalences
$$
\s D_c(\fk X^{\bb N},\Lambda_{\bullet})
\longleftrightarrow\s D_c(X_{\bullet}(\bb C)^{\bb
N},\Lambda_{\bullet}),\qquad
D_c(\fk X,\Lambda)\longleftrightarrow D_c(X_{\bullet}(\bb
C),\Lambda).
$$

(ii) We prove it for torsion coefficients first, and
then pass to adic coefficients. For torsion coefficients,
we prove it for schemes first, and then apply descent
(\cite{LO1}, 2.2.3) to deduce it for algebraic spaces.

Let $X/\bb C$ be a scheme and let $G$ be
a sheaf of abelian groups on $X(\bb C).$ Then the
sheaf $R^i\xi_*G$ on $X_{\text{\'et}}$ is the
sheafification of the presheaf
$$
(U\to X)\mapsto H^i(U(\bb C),G).
$$
By (\cite{Dim}, 3.1.7, 3.4.1), $R^i\xi_*G=0$ for all
sheaves $G$ and all $i>1+2\dim X,$ so $R\xi_*$
has finite cohomological dimension, and the functor
$$
R\xi_*:\s D(X(\bb C),\Lambda_n)\to\s D(X,\Lambda_n)
$$
takes the full subcategory $\s D_c(X(\bb C),
\Lambda_n)$ into $\s D_c(X,\Lambda_n).$

Given $F\in\s D_c(X,\Lambda_n)$ and $G\in\s D_c(X(\bb
C),\Lambda_n),$ we want to show that the adjunction
and coadjunction morphisms
$$
F\to R\xi_*\xi^*F,\qquad \xi^*R\xi_*G\to G
$$
are isomorphisms. The analytification functor $\xi^*$ is
exact and $\text{cd}(R\xi_*)<\infty,$ so by
applying $\s H^i$ on both sides,
we may assume that $F$ and $G$ are bounded, or even
constructible sheaves. By the comparison (\cite{BBD},
6.1.2 $(A')$), $G$ is algebraic (i.e. $G=\xi^*\widetilde{G}$
for some $\Lambda_n$-sheaf $\widetilde{G}$ on
$X_{\text{\'et}}$).

The sheaves $R^i\xi_*\xi^*F$ and $R^i\xi_*G$ are
sheafifications of the functors on $\text{\'Et}(X)$
$$
(U\to X)\mapsto H^i(U(\bb C),F^{\text{an}}),\qquad
(U\to X)\mapsto H^i(U(\bb C),G)
$$
respectively. By the comparison theorem of Artin
(\cite{SGA4}, XVI, 4.1), we have
$$
H^i(U(\bb C),F^{\text{an}})=H^i(U,F),\qquad
H^i(U(\bb C),G)=H^i(U,\widetilde{G}),
$$
so they both sheafify to zero if $i>0$ (\cite{Mil1}, 10.4),
and to $F$ and $\widetilde{G}$ respectively if
$i=0.$ It follows that the adjunction and coadjunction
morphisms are both isomorphisms, and we have an equivalence
$$
(\xi^*,R\xi_*):\s D_c(X(\bb C),\Lambda_n)\longleftrightarrow
\s D_c(X,\Lambda_n)
$$
for each $n.$

Now let $X$ be a $\bb C$-algebraic space, and take a simplicial
\'etale hypercover $\epsilon:X_{\bullet}\to X$ of $X$ by
schemes. As in (i), we can apply (\cite{LO1}, 2.2.3) to
show that the morphisms of topoi
$$
\xymatrix@C=.5cm{
X_{\bullet,\text{\'et}} & X_{\text{\'et}}|_{X_{\bullet}}
\ar[l] \ar[r] & X_{\text{\'et}}}
$$
induce equivalences
$$
\s D_c(X_{\bullet,\text{\'et}},\Lambda_n)\longleftrightarrow
\s D_c(X_{\text{\'et}}|_{X_{\bullet}},\Lambda_n)\longleftrightarrow
\s D_c(X_{\text{\'et}},\Lambda_n)
$$
for each $n.$ Similarly, $\epsilon^{\text{an}}:X_{\bullet}
(\bb C)\to X(\bb C)$ induces an equivalence
$$
\s D_c(X_{\bullet}(\bb C),\Lambda_n)\to\s D_c(X(\bb C),\Lambda_n).
$$
Therefore, the commutative diagram of topoi
$$
\xymatrix@C=.8cm @R=.6cm{
X_{\bullet}(\bb C) \ar[r]^-{\epsilon^{\text{an}}}
\ar[d]_-{\xi_{X_{\bullet}}} & X(\bb C) \ar[d]^-{\xi_X} \\
X_{\bullet,\text{\'et}} \ar[r]_-{\epsilon} &
X_{\text{\'et}}}
$$
leads to a commutative diagram
$$
\xymatrix@C=.8cm @R=.6cm{
\s D_c(X_{\bullet}(\bb C),\Lambda_n) \ar[r]^-{\sim} \ar[d]^-{\sim}
& \s D_c(X(\bb C),\Lambda_n) \ar[d]^-{(1)} \\
\s D_c(X_{\bullet,\text{\'et}},\Lambda_n) \ar[r]^-{\sim} &
\s D_c(X_{\text{\'et}},\Lambda_n),}
$$
and (1) is an equivalence.

Since AR-adic (resp.\ AR-null) complexes in $\s D(X(\bb C)^{\bb
N},\Lambda_{\bullet})$ and in $\s D(X^{\bb N},\Lambda_{\bullet})$
correspond, $(\xi^*,R\xi_*)$ induce equivalences
$$
\s D_c(X(\bb C)^{\bb N},\Lambda_{\bullet})
\longleftrightarrow\s D_c(X^{\bb N},\Lambda_{\bullet}),
\qquad D_c(X(\bb C),\Lambda)\longleftrightarrow
D_c(X,\Lambda).
$$

(iii) Applying $\s H^i$ on both sides, we need to show that
$$
\xi_Y^*R^if_*F\to R^if^{\text{an}}_*(\xi_X^*F)
$$
is an isomorphism. The normalization functor $F\mapsto\widehat{F}$
has finite cohomological dimension, so $\widehat{F}$
is essentially bounded below. Replacing $F$ by various levels
$\widehat{F}_n$ of its normalization, we reduce to the
case where $F\in\s D_c^+(X,\Lambda_n).$ Then one
can replace $F$ by $\tau_{\le i}F$ and reduce to the case
where $F$ is bounded, or even a constructible
$\Lambda_n$-sheaf. For schemes, this follows from
Artin's comparison theorem (\cite{SGA4}, XVI, 4.1).
For algebraic spaces, this follows from
the case of schemes by descent.

Explicitly, to prove that the base change morphism is an
isomorphism, we may pass to an \'etale presentation of
$Y$ and apply smooth base change theorem, hence reduce
to the case where $Y$ is a scheme. Then let
$X_{\bullet}\to X$ be a simplicial \'etale cover by
schemes, and let $f_p:X_p\to Y\ (p\ge1)$ be the composition
$X_p\to X\overset{f}{\to}Y.$ The commutative diagram
of topoi
$$
\xymatrix@C=.8cm @R=.6cm{
X_{\bullet}(\bb C) \ar[r] \ar[d]_-{\xi_{X_{\bullet}}}
& X(\bb C) \ar[r]^-{f^{\text{an}}} \ar[d]_-{\xi_X} &
Y(\bb C) \ar[d]^-{\xi_Y} \\
X_{\bullet,\text{\'et}} \ar[r] & X_{\text{\'et}}
\ar[r]^-f & Y_{\text{\'et}}}
$$
leads to a morphism of spectral sequences
$$
\xymatrix@C=1cm @R=.7cm{
& \xi_Y^*R^qf_{p*}(F|_{X_p}) \ar@{=>}[r] \ar[ld]_-{(1)}
& \xi_Y^*R^{p+q}f_*F \ar[d]^-{(2)} \\
R^qf^{\text{an}}_{p*}(\xi_{X_p}^*(F|_{X_p}))
\ar[r]^-{\sim} & R^qf^{\text{an}}_{p*}((\xi_X^*F)|_{X_p(\bb
C)}) \ar@{=>}[r] & R^{p+q}f^{\text{an}}_*(\xi_X^*F),}
$$
where (1) is an isomorphism, therefore (2) is an
isomorphism.
\end{proof}

\begin{subcorollary}\label{compar-cohom}
There is a natural equivalence between the triangulated
categories $D_c(\fk X,\Lambda)$ and $D_c(\mathcal
X,\Lambda),$ compatible with pushforwards by morphisms
of complex algebraic stacks.
\end{subcorollary}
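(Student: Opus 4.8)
The plan is to realize the equivalence as a composite of three equivalences already at our disposal. Fix a presentation $P:X\to\mathcal X$, let $\epsilon:X_\bullet\to\mathcal X$ be the associated strictly simplicial smooth hypercover with analytification $\epsilon^{\text{an}}:X_\bullet(\bb C)\to\fk X$, and let $\xi_\bullet:X_\bullet(\bb C)\to X_{\bullet,\text{\'et}}$ be the natural morphism of simplicial topoi. Set $\Phi_{\mathcal X}=R\epsilon_*\circ R\xi_{\bullet,*}\circ\epsilon^{\text{an},*}$. By \ref{P-compar}(i), $(\epsilon^{\text{an},*},R\epsilon^{\text{an}}_*)$ induces an equivalence $D_c(\fk X,\Lambda)\simeq D_c(X_\bullet(\bb C),\Lambda)$. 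By \ref{P-compar}(ii) applied to each scheme $X_p$, together with \ref{P-compar}(iii) — which shows $R\xi_*$ commutes with the pullbacks along the face maps, hence preserves Cartesianness and respects the simplicial structure — the pair $(\xi_\bullet^*,R\xi_{\bullet,*})$ induces an equivalence $D_c(X_\bullet(\bb C),\Lambda)\simeq D_c(X_{\bullet,\text{\'et}},\Lambda)$ (the adic statement following from the torsion ones by the usual argument that AR-adic/AR-null complexes are intrinsic, \cite{SGA5}, V, 3.2.3). Finally, by (\cite{LO1}, 2.2.6) and (\cite{LO2}, 10.0.8), recalled before \ref{P-compar}, $(\epsilon^*,R\epsilon_*)$ induces an equivalence $D_c(X_{\bullet,\text{\'et}},\Lambda)\simeq D_c(\mathcal X,\Lambda)$. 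Composing, $\Phi_{\mathcal X}$ is an equivalence of triangulated categories.

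Next I would check independence of the choice of presentation. Two presentations $X\to\mathcal X$ and $X'\to\mathcal X$ are both refined by $X\times_{\mathcal X}X'\to\mathcal X$, and passing to hypercovers one obtains a morphism of simplicial smooth hypercovers over $\mathcal X$; since all three building-block equivalences are compatible with such refinements (the analytic and the \'etale ones exactly as in (\cite{LO2}, 10.0.8), and the middle one because $R\xi_*$ is functorial and commutes with the relevant face-map pullbacks by \ref{P-compar}(iii), as well as with the restriction identities $\epsilon^{\text{an},*}\simeq\delta^{\text{an}}_{\bullet,*}\gamma^{\text{an},*}$ of \ref{P-compar}(i)), one gets a canonical isomorphism between the resulting functors. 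This makes $\Phi_{\mathcal X}$ well defined up to canonical isomorphism.

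For compatibility with pushforwards, let $f:\mathcal X\to\mathcal Y$ be a morphism of complex algebraic stacks. Choose a smooth hypercover $Y_\bullet\to\mathcal Y$ and, after refining, a hypercover $X_\bullet\to\mathcal X$ together with a morphism $f_\bullet:X_\bullet\to Y_\bullet$ lying over $f$; this is compatible with analytification since the latter preserves finite $2$-fiber products (\ref{analytification}). By construction in \cite{LO2}, the functor $f_*$ on the lisse-\'etale side is then computed as $R\epsilon^Y_*\circ Rf_{\bullet,*}\circ\epsilon^{X,*}$, and similarly for $f^{\text{an}}_*$ on the lisse-analytic side. Hence the desired compatibility $\Phi_{\mathcal Y}\circ f^{\text{an}}_*\simeq f_*\circ\Phi_{\mathcal X}$ reduces, after cancelling the (co)restriction functors via the equivalences above, to the commutativity $R\xi^Y_{\bullet,*}\circ Rf_{\bullet,*}\simeq Rf^{\text{an}}_{\bullet,*}\circ R\xi^X_{\bullet,*}$ of simplicial topoi, which is exactly \ref{P-compar}(iii) applied to each $f_p:X_p\to Y_q$.

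The main obstacle is not any analytic input — Artin's comparison theorem is already packaged into \ref{P-compar}(ii) and (iii) — but the $2$-categorical bookkeeping: one must verify that the equivalence is genuinely canonical, i.e.\ independent of the chosen hypercover through a coherent system of isomorphisms, and that a compatible $f_\bullet:X_\bullet\to Y_\bullet$ can always be arranged without affecting the outcome. This is the standard cofinality argument for smooth hypercovers, formally identical to the one used in \cite{LO2} for the lisse-\'etale theory, so I would present it briefly and refer to loc.\ cit.\ for the routine coherence checks.
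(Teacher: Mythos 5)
Your proposal is correct and follows essentially the same route as the paper: the same functor $R\epsilon_*\circ R\xi_{\bullet,*}\circ\epsilon^{\mathrm{an},*}$, decomposed into the same three equivalences (\ref{P-compar}(i), the levelwise/simplicial form of \ref{P-compar}(ii), and (\cite{LO1}, 2.2.6)/(\cite{LO2}, 10.0.8)), with pushforward compatibility reduced via a compatible hypercover morphism to \ref{P-compar}(iii) and the lisse-\'etale analogue, and independence of presentation handled by the usual $2$-fiber-product domination argument. The only slight imprecision is invoking \ref{P-compar}(iii) for preservation of Cartesianness under $R\xi_{\bullet,*}$ (that is a pullback--pushforward base-change compatibility, most cleanly deduced from the levelwise equivalences of \ref{P-compar}(ii) themselves), but this does not affect the argument.
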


\begin{proof}
As mentioned before, combining (\cite{LO1}, 2.2.6) and
(\ref{P-compar}) we see that $R\epsilon_*\circ
R\xi_{\bullet,*}\circ\epsilon^{\text{an},*}$ gives an
equivalence between $D_c(\fk X,\Lambda)$ and
$D_c(\mathcal X,\Lambda).$ Let $f:\mathcal X\to\mathcal Y$
be a morphism of $\bb C$-algebraic stacks, and we
choose a commutative diagram
$$
\xymatrix@C=.8cm @R=.6cm{
X_{\bullet} \ar[r]^-{\widetilde{f}} \ar[d]_-{\epsilon_{
\mathcal X}} & Y_{\bullet} \ar[d]^-{\epsilon_{\mathcal
Y}} \\
\mathcal X \ar[r]^-f & \mathcal Y.}
$$
Then we have the following diagram
$$
\xymatrix@C=.8cm @R=.7cm{
D_c^+(\fk X,\Lambda) \ar[r]^-{\epsilon_{\mathcal
X}^{\text{an},*}} \ar[d]^-{f^{\text{an}}_*} &
D_c^+(X_{\bullet}(\bb C),\Lambda)
\ar[d]^-{\widetilde{f}^{\text{an}}_*} &
D_c^+(X_{\bullet},\Lambda) \ar[l]_-{\xi_{X_{\bullet}}^*}
\ar[d]^-{\widetilde{f}_*} & D_c^+(\mathcal X,\Lambda)
\ar[l]_-{\epsilon_{\mathcal X}^*} \ar[d]^-{f_*} \\
D_c^+(\fk Y,\Lambda) \ar[r]_-{\epsilon_{\mathcal
Y}^{\text{an},*}} & D_c^+(Y_{\bullet}(\bb C),
\Lambda) & D_c^+(Y_{\bullet},\Lambda)
\ar[l]^-{\xi_{Y_{\bullet}}^*} & D_c^+(\mathcal
Y,\Lambda), \ar[l]^-{\epsilon_{\mathcal Y}^*}}
$$
where the horizontal arrows are all equivalences of
triangulated categories. The square on the left commutes
by construction, and the commutativity of the squares in
the middle and on the right follows from (\ref{P-compar} iii)
and (\cite{LO2}, p.202) respectively.

It remains to show that the equivalence is ``natural" in the
sense that, if $P':X'\to\mathcal X$ is another presentation,
the induced equivalence is naturally isomorphic to the one
induced by $X.$ The usual argument of taking 2-fiber product
reduces us to assume that one presentation dominates the
other, and the claim is clear in this case.
\end{proof}

\subsection{Comparison between the two derived
categories on the lisse-analytic
topos}\label{conts-derived}

In (\ref{LO2-analytic}) and (\ref{complex-derived}), we
defined two derived categories, denoted by $D_c(\fk
X,\Lambda)$ and $\s D_c(\fk X,\Lambda)$
respectively. Before proving that they are equivalent, we give
some preparation on the analytic analogues of some
concepts and results in \cite{LO2}.

\begin{anitem}\label{analytic-normalization}
For now let $\fk X$ be any complex analytic stack,
not necessarily algebraic. As in \cite{Eke}, let $\pi:(\fk
X^{\bb N}_{\text{lis-an}},\Lambda_{\bullet})\to(\fk
X_{\text{lis-an}},\Lambda)$ be the morphism of ringed topoi, with
$\pi_*=\varprojlim$ and $\pi^*=(-\otimes_{\Lambda}\Lambda_n)_n.$
We then have derived functors $R\pi_*$ and $L\pi^*$ between $\s
D(\fk X^{\bb N},\Lambda_{\bullet})$ and $\s D(\fk X,\Lambda),$
and $L\pi^*$ has (co)homological dimension 1.
Denote $\text{Mod}(\fk X^{\bb N},\Lambda_{\bullet})$ by $\s
A(\fk X)$ or just $\s A.$

Variant: more generally, one can consider other coefficients
for the topoi $\fk X^{\bb N}_{\text{lis-an}}$ and $\fk X_{\text{lis-an}},$ for instance the
constant sheaf $\bb Z,$ namely one is considering all sheaves
of abelian groups. In this case we have derived functors
$R\pi_*$ and $L\pi^*=L\pi^{-1}=\pi^{-1}$ between $\s
D(\fk X^{\bb N},\bb Z)$ and $\s D(\fk X,\bb Z).$

When $\fk X=X$ is an analytic space (always assumed
to be finite dimensional), we often consider
the morphism $\pi_X:X_{\text{an}}^{\bb N}\to
X_{\text{an}}$ between analytic topoi. The derived
functor $R\pi_{X*}:\s D(X^{\bb N},\bb Z)\to\s
D(X,\bb Z)$ has finite cohomological dimension: this is a consequence
of (\cite{LO2}, 2.1.i) and (\cite{Dim}, 3.1.7, 3.4.1). In this
case, by $\pi_X$ we always mean this morphism between analytic
topoi, rather than the lisse-analytic topoi of $X,$ unless
otherwise stated.

It follows from (\ref{LO2-2.2.1} i) below that, for
an analytic stack $\fk X$ with a presentation $X,$ we
have $\text{cd}(R\pi_*)\le\text{cd}(R\pi_{X*})<\infty.$

\begin{sublemma}\label{analytic-LO2-2.2.2}
Let $M$ be an AR-null complex in $\s D(\s A(\fk X)).$
Then $R\pi_*M=0.$
\end{sublemma}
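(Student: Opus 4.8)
The plan is to reduce the statement about stacks to the corresponding statement for analytic spaces, using a presentation, and then to handle the case of an analytic space directly by reducing to the torsion case level-by-level.

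First I would recall what AR-null means: a cohomology system $\mathscr H^i(M) = (\mathscr H^i(M_n))_n$ is AR-null if for some fixed integer $r$, the transition maps $\mathscr H^i(M_{n+r}) \to \mathscr H^i(M_n)$ are zero for all $n$. So the hypothesis says each $\mathscr H^i(M)$ is AR-null. Since $R\pi_*$ is computed via a hypercohomology/Cartan–Eilenberg-type spectral sequence $\mathscr H^p(R\pi_* \mathscr H^q(M)) \Rightarrow \mathscr H^{p+q}(R\pi_* M)$ (and one knows from \ref{analytic-normalization} that $\mathrm{cd}(R\pi_*) < \infty$, so convergence is not an issue even in the unbounded setting once one truncates), it suffices to prove the sublemma for an AR-null \emph{module}, i.e. a single projective system $F = (F_n)_n$ of sheaves such that the system is AR-null; that is, some $r$ with $F_{n+r} \to F_n$ zero for all $n$. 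Concretely $R\pi_* F = R\varprojlim F_n$, and I want to show $R^j\varprojlim F_n = 0$ for all $j$ when the tower is AR-null.

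Next I would reduce to an analytic space via a presentation $P : X \to \fk X$ and the associated strictly simplicial hypercover $X_\bullet(\bb C) \to \fk X$. Using the equivalence from (\ref{P-compar} i) and the compatibility of $R\pi_*$ with restriction to the simplicial hypercover (the restriction functors $\gamma^{\mathrm{an},*}$, $\delta^{\mathrm{an},*}_\bullet$ are exact and commute with $\varprojlim$, being themselves restrictions of topoi), one checks $R\pi_* M = 0$ after restricting to each $X_n(\bb C)$; and being AR-null is preserved by these restrictions. So one is reduced to: $X$ an analytic space, $F = (F_n)_n$ an AR-null tower of abelian sheaves on $X_{\mathrm{an}}$, then $R\pi_{X*} F = 0$. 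For this last step the key point is the classical fact (as in \cite{LO2}, 2.1, and \cite{SGA5}, V) that $R^j\varprojlim$ vanishes on AR-null towers: indeed an AR-null tower is pro-isomorphic to the zero tower, and $R\varprojlim$ factors through the pro-category, or more concretely one uses that the tower $(F_n)$ with $r$-step-zero transition maps admits a cofinal subtower in which all maps are zero, and $R\varprojlim$ of a zero tower vanishes. One must be slightly careful because $R^j\varprojlim$ is computed termwise on sheaves only if one has enough acyclics level by level, but on an analytic space $\varprojlim$ on abelian sheaves has cohomological dimension $\le 1$ (it is right-derived of a limit of sheaves, computed via the Mittag–Leffler-type $\lim^1$ sequence), so the whole argument is controlled by the two-term complex $\prod F_n \to \prod F_n$.

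I expect the main obstacle to be bookkeeping rather than anything deep: namely making precise that $R\pi_* $ commutes with the restriction to the simplicial hypercover so that the reduction to an analytic space is legitimate, and that ``AR-null'' — which a priori is a condition on cohomology \emph{systems} of a complex — descends to the modules $\mathscr H^i(M)$ and then to the spaces $X_n(\bb C)$. Once those reductions are in place, the vanishing of $R\varprojlim$ on essentially-zero towers is standard. The analogue for the lisse-\'etale topos is \cite{LO2}, 2.2.2, and the proof there transcribes almost verbatim once one has the analytic inputs $\mathrm{cd}(R\pi_{X*}) < \infty$ and the finite $\ell$-cohomological-dimension-type bounds from \cite{Dim}, 3.1.7, 3.4.1, both of which are already recorded above.
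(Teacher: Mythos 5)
Your outline (reduce to AR-null modules, then to analytic spaces, then kill the tower) is reasonable, but two of the mechanisms you name do not work. The reduction to the simplicial hypercover via (\ref{P-compar} i) is not legitimate here: the sublemma concerns an \emph{arbitrary} complex in $\s D(\s A(\fk X))$, with no Cartesian or constructibility hypothesis, whereas (\ref{P-compar} i) only gives equivalences between the constructible categories $\s D_c$ and $D_c$. For a non-Cartesian object, the cohomology sheaves of $R\pi_*M$ need not be Cartesian, and vanishing of their restrictions to $X_\bullet(\bb C)$ does not imply vanishing on $\fk X_{\text{lis-an}}$. The correct local statement is (\ref{LO2-2.2.1} i): $(R\pi_*M)_U=R\pi_{U*}(M_U)$ for \emph{every} $(U,u)\in\text{Lis-an}(\fk X)$, so one must prove the vanishing over every object of the site, not just over a presentation.

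Second, your justification of the vanishing on a space is wrong as stated: countable products of abelian sheaves on an analytic space are not exact, so $R\varprojlim$ is not computed by the two-term complex $\prod F_n\to\prod F_n$ and its cohomological dimension is not $\le 1$; the finiteness of $\text{cd}(R\pi_{U*})$ genuinely uses the finite dimensionality of $U$ via (\cite{Dim}, 3.1.7, 3.4.1) together with (\cite{LO2}, 2.1.i). The fact you need --- that $R\pi_*$ kills AR-null towers, and more generally essentially bounded below AR-null complexes --- is true, but it is precisely the content of (\cite{Eke}, 1.1), not a formal consequence of a $\varprojlim^1$ sequence or of pro-isomorphism invariance of derived limits. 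Likewise, your appeal to $\text{cd}(R\pi_*)<\infty$ plus a hypercohomology spectral sequence to handle unbounded $M$ glosses over the passage from finite cohomological dimension on modules to unbounded complexes on a big topos; this is exactly where the paper invokes (\cite{LO1}, 2.1.10), with the local condition (\cite{LO1}, 2.1.7) checked by Dimca's bounds. For comparison, the paper's proof is short and needs no reduction to a presentation: the essentially bounded below case is (\cite{Eke}, 1.1), and the general case follows from (\cite{LO1}, 2.1.10).
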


\begin{proof}
The case when $M$ is essentially bounded below follows
from (\cite{Eke}, 1.1). In particular, since each of
$\s H^i(M)$ and $\tau_{>i}M$ is AR-null, we have $R\pi_*\s
H^i(M)\cong R\pi_*\tau_{>i}M=0.$ For the general case, we
apply (\cite{LO1}, 2.1.10), with $\epsilon=\pi$ and $\s
C_{\bullet}=$ the entire category of $\Lambda_{\bullet}$-modules on
$\fk X_{\text{lis-an}}^{\bb N};$ the conditions in \textit
{loc. cit.} are trivially satisfied, and the assumption
(\cite{LO1}, 2.1.7) for the ringed topoi $(\fk X_{\text{lis-an}},
\Lambda_n)$ is verified by (\cite{Dim}, 3.1.7, 3.4.1).
\end{proof}

For $M\in\s D(\s A),$ let $\widehat{M}$ be the \textit{normalization of
$M:$} $\widehat{M}=L\pi^*R\pi_*M.$ We say that $M$ is \textit{normalized} if
the coadjunction morphism $\widehat{M}\to M$ is an isomorphism. As
mentioned before, if $M\in\s D(\s A(X_{\text{an}}))$ for an analytic
space $X,$ one defines the normalization $\widehat{M}$ similarly, using
the morphism $\pi_X$ of analytic topoi. In this case, the normalization
functor has finite cohomological dimension.

The analytic versions of (\cite{LO2}, 2.2.1, 3.0.11, 3.0.10)
hold and can be proved verbatim, as we state in the following.

\begin{subproposition}\label{LO2-2.2.1}
\emph{(i)} For $(U,u)$ in $\emph{Lis-an}(\fk X)$ and $M\in\s
D(\fk X^{\bb N},\bb Z),$ we have $R\pi_{U*}(M_U)=(R\pi_*M)_U$ in $\s D(U_{\emph{an}},\bb Z).$

\emph{(ii)} For $(U,u)$ in $\emph{Lis-an}(\fk X)$ and $M\in\s
D(\fk X,\Lambda),$ we have $L\pi^*_U(M_U)=(L\pi^*M)_U$ in $\s
D(\s A(U_{\emph{an}})).$

\emph{(iii)} Let $M\in\s D(\s A).$ Then it is normalized if and only if the natural
morphism
$$
\Lambda_{n-1}\otimes^L_{\Lambda_n}M_n\to M_{n-1}
$$
is an isomorphism for each $n.$
\end{subproposition}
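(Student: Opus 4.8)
The plan is to reduce everything to the classical statement in \cite{LO2} by working levelwise on a presentation and invoking the analytic analogues already established. Since the assertion is about the morphism $\pi$ of ringed topoi $(\fk X^{\bb N}_{\text{lis-an}},\Lambda_{\bullet})\to(\fk X_{\text{lis-an}},\Lambda)$ and its derived functors, and since the content of (i) is precisely that $R\pi_*$ and $L\pi^*$ commute with restriction to objects $(U,u)$ of $\text{Lis-an}(\fk X)$ where $U$ is an analytic space, the natural approach is: first prove (i) directly, then deduce (iii) by checking the normalization condition locally, i.e.\ after restricting along a presentation, where it becomes the corresponding statement for analytic spaces, which is \cite{LO2}, 3.0.10 transported verbatim.

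For part (i), I would argue as follows. The functor $\pi_*=\varprojlim$ is computed pointwise on the lisse-analytic site, so for a sheaf $M$ the restriction $(\pi_*M)_U$ is visibly $\pi_{U*}(M_U)$; the issue is only that $R\pi_*$ is computed via an injective (or $\pi_*$-acyclic) resolution, and one must know that restriction to $U$ sends such resolutions to something computing $R\pi_{U*}$. This is the standard fact that the localization morphism $j_U\colon\fk X_{\text{lis-an}}|_U\to\fk X_{\text{lis-an}}$ has an exact left adjoint $j_{U!}$, hence $j_U^*$ preserves injectives; the simplicial/projective-system version is handled exactly as in \cite{LO2}, 2.2.1. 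Once restriction preserves $\pi_*$-acyclicity, $(R\pi_*M)_U\simeq R\pi_{U*}(M_U)$ follows by comparing the two hyper-derived functors on the same resolution. For (ii), $L\pi^*_U$ commuting with restriction is formal once one observes $\pi^*=(-\otimes_\Lambda\Lambda_n)_n$ is computed pointwise and that flat resolutions restrict to flat resolutions (restriction $j_U^*$ is exact and monoidal), again mirroring \cite{LO2}, 2.2.1.

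For part (iii): $M$ is normalized means the coadjunction $\widehat M=L\pi^*R\pi_*M\to M$ is an isomorphism in $\s D(\s A)$. Being an isomorphism is detected after restriction to a presentation $X\to\fk X$, hence (by (i), (ii)) amounts to $\widehat{M_X}\to M_X$ being an isomorphism in $\s D(\s A(X_{\text{an}}))$, i.e.\ $M_X$ is normalized. For analytic spaces this is exactly the analytic analogue of \cite{LO2}, 3.0.10, whose proof uses only the finiteness of $\text{cd}(R\pi_{X*})$ --- supplied by \cite{Dim}, 3.1.7, 3.4.1, as noted in (\ref{analytic-normalization}) --- and formal manipulations with the triangle $L\pi^*R\pi_* M\to M\to(\text{AR-null})$; it characterizes normalization by the condition that $\Lambda_{n-1}\otimes^L_{\Lambda_n}M_n\to M_{n-1}$ be an isomorphism for all $n$. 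Since this last condition is itself checked levelwise and is preserved and reflected by restriction to $X$ (as $\otimes^L$ and the transition maps are computed levelwise and $j_X^*$ is exact), we conclude the same characterization holds on $\fk X$.

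The routine points are the behavior of injective and flat resolutions under the exact localization functor $j_U^*$ and its monoidality; these are entirely formal once set up, exactly as in \cite{LO2}, \S2. The only place requiring genuine input is the finiteness $\text{cd}(R\pi_{X*})<\infty$ for an analytic space $X$, which one needs in order to reduce the ``general case'' of the normalization criterion to the bounded-below case (where it is \cite{Eke}, 1.1-style bookkeeping); but this finiteness has already been recorded in (\ref{analytic-normalization}) via \cite{Dim}, so in practice the proof is a transcription of \cite{LO2}, 2.2.1 and 3.0.10 with $\text{\'et}$ replaced by $\text{an}$ throughout, and I would simply indicate this and point to the cited results rather than reproduce the arguments.
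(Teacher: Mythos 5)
Your overall strategy is the same as the paper's: the paper gives no independent argument for (\ref{LO2-2.2.1}) beyond the assertion that the proofs of (\cite{LO2}, 2.2.1, 3.0.11, 3.0.10) carry over verbatim to the lisse-analytic setting, the only genuinely analytic input being $\text{cd}(R\pi_{X*})<\infty$ supplied by (\cite{Dim}, 3.1.7, 3.4.1) and recorded in (\ref{analytic-normalization}). However, one step in the detail you add for (iii) would fail as written.

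You reduce (iii) to the analytic-space case by claiming that the coadjunction $\widehat M\to M$ being an isomorphism, and the condition that $\Lambda_{n-1}\otimes^L_{\Lambda_n}M_n\to M_{n-1}$ be an isomorphism, are ``detected'' and ``preserved and reflected'' by restriction to a single presentation $X\to\fk X$. For arbitrary $M\in\s D(\s A)$ this is not justified: restriction to the small analytic topos of one presentation is not conservative on the lisse-analytic topos, only on Cartesian objects (via descent, as in \ref{Ols3-4.4}), and neither $M$ nor the cone of $\widehat M\to M$ is assumed Cartesian here. Concretely, morphisms $(V,v)\to(X,P)$ are not coverings in the lisse-analytic topology, so $F_X=0$ does not force $F=0$; e.g.\ on $\fk X=\mathrm{pt}$ with the tautological presentation $X=\mathrm{pt}$, the non-Cartesian sheaf $(U,u)\mapsto\Gamma(U,\Omega^1_U)$ is nonzero but restricts to zero on $X$. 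The repair is exactly why (i) and (ii) are stated for \emph{every} $(U,u)\in\text{Lis-an}(\fk X)$: the restrictions to all such $U$ are exact and jointly conservative, so one checks the cone of $\widehat M\to M$, and the levelwise condition, after restriction to each $U$, where the analytic-space statement (your transcription of \cite{LO2}, 3.0.10, using $\text{cd}(R\pi_{U*})<\infty$) applies. A smaller imprecision in (i): the restriction $M\mapsto M_U$ of the statement is not the localization $j_U^*$ but its composite with the pushforward to the small analytic topos $U_{\text{an}}$; this composite is exact (smooth surjections of analytic spaces admit local sections) but need not preserve injectives, so instead of resolving by injectives one should argue, as in \cite{LO2}, via the triangle computing $R\varprojlim$ by countable products, which restriction---being exact and commuting with products of sheaves---preserves; this also gives (i) for unbounded $M$ directly.
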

\end{anitem}

\begin{anitem}\label{fact1}
Let $f:X\to Y$ be a morphism of complex analytic spaces. Then
we have a natural isomorphism
$$
Rf_*\circ R\pi_{X*}\cong R\pi_{Y*}\circ Rf^{\bb N}_*:\s
D(X^{\bb N},\bb Z)\to\s D(Y,\bb Z).
$$
In fact, $f$ defines a morphism of their analytic topoi $f:X_{\text
{an}}\to Y_{\text{an}},$ and we have a commutative diagram of topoi:
$$
\xymatrix@C=.8cm @R=.6cm{
X_{\text{an}}^{\bb N} \ar[r]^-{f^{\bb N}} \ar[d]_-{\pi_X} &
Y_{\text{an}}^{\bb N} \ar[d]^-{\pi_Y} \\
X_{\text{an}} \ar[r]_-f & Y_{\text{an}}.}
$$
To see this, one verifies either $f^{\bb N,-1}\circ\pi_Y^{-1}\cong
\pi_X^{-1}\circ f^{-1}$ (which is clear) or $\pi_{Y*}\circ f^{\bb
N}_*\cong f_*\circ\pi_{X*}$ (namely, $f_*$ preserves limits).

One may generalize it to algebraic morphisms between algebraic analytic
stacks and their adic derived categories, using descent. As
we will not use it in the sequel, we do not give the proof
in detail here.
\end{anitem}

\begin{sublemma}\label{A''}
\emph{(i)} Let $X$ be a $\bb C$-scheme and $F\in\fk{Mod}_c(\Lambda_{X(\bb C)}).$
Then there is an integer $N\ge0$ such that $F/\emph{Ker}(\lambda^N)$
is a flat $\Lambda_{X(\bb C)}$-sheaf. Also, for each $n\ge0,$ the
sheaf $F\otimes\Lambda_n$ is constructible.

\emph{(ii)} Let $X$ be an analytic space and let $F$ be an
analytically constructible $\Lambda_X$-module. Then there is an
integer $N\ge0$ with the same property as above.
\end{sublemma}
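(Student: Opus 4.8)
The plan is to reduce both statements to local, stalk-wise assertions about finitely generated modules over the complete DVR $\Lambda$, and then to propagate the pointwise bound to a uniform bound using constructibility (finiteness of the stratification). I would treat part (ii) first, since it is the purely analytic heart of the matter, and then deduce part (i) by transporting the analytic statement along the analytification of an algebraic stratification (using the fact, recalled in \ref{alg-constr}, that algebraic constructibility is a strengthening of analytic constructibility, so an algebraically constructible sheaf is in particular analytically constructible).

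For part (ii): fix an analytically constructible $\Lambda_X$-module $F$, i.e.\ a projective system $(F_n)_n$ with $F_n$ a constructible $\Lambda_n$-module and the maps $F_n\otimes_{\Lambda_n}\Lambda_{n-1}\to F_{n-1}$ isomorphisms. First I would choose a single analytic stratification $X=\coprod_\alpha S_\alpha$ (finitely many strata, by our finite-dimensionality convention) on which \emph{every} $F_n$ is locally constant; this is possible because $F$ is adic, so $F_n$ is determined by $F_0$ up to the module structure, and one can refine to a common stratification. On each stratum $S_\alpha$ the sheaf $F$ is a locally constant sheaf of finitely generated $\Lambda$-modules, hence on each connected component it is given by a single finitely generated $\Lambda$-module $M_\alpha$ together with a monodromy action; since $S_\alpha$ has finitely many connected components, there are finitely many such $M_\alpha$ in all. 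For a finitely generated module $M$ over the complete DVR $\Lambda$, the $\lambda^\infty$-torsion submodule $M[\lambda^\infty]$ is killed by $\lambda^{N(M)}$ for some $N(M)$ (structure theorem: $M\simeq \Lambda^r\oplus\bigoplus_i\Lambda/\lambda^{a_i}$), and $M/M[\lambda^{N}]$ is free, hence flat, for any $N\ge N(M)$. Take $N$ to be the maximum of $N(M_\alpha)$ over the finitely many modules occurring. Then $F/\mathrm{Ker}(\lambda^N)$ restricts on each stratum to a locally constant sheaf of free $\Lambda$-modules, and flatness of a $\Lambda_X$-sheaf can be checked stalk-wise — a stalk of $F/\mathrm{Ker}(\lambda^N)$ is (by exactness of stalk-taking and the fact that $\mathrm{Ker}(\lambda^N)$ commutes with stalks) a quotient $M_\alpha/M_\alpha[\lambda^N]$, which is free over $\Lambda$ — so $F/\mathrm{Ker}(\lambda^N)$ is flat. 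This proves (ii).

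For part (i): given $F\in\fk{Mod}_c(\Lambda_{X(\bb C)})$ for a $\bb C$-scheme $X$, it is in particular analytically constructible on $X(\bb C)$, so part (ii) furnishes an integer $N$ with $F/\mathrm{Ker}(\lambda^N)$ flat as a $\Lambda_{X(\bb C)}$-sheaf, and this is exactly the first assertion. For the second assertion, that $F\otimes_\Lambda\Lambda_n$ is constructible: since $F$ is algebraically constructible there is an \emph{algebraic} stratification on which each level $F_m$ is lcc; restricting to a stratum, $F$ is given by a finitely generated $\Lambda$-module with monodromy, so $F\otimes_\Lambda\Lambda_n$ restricts to an lcc $\Lambda_n$-sheaf there (tensoring a finitely generated $\Lambda$-module with $\Lambda_n$ keeps it finitely generated over $\Lambda_n$, and commutes with the monodromy action and with stalks). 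Hence $F\otimes_\Lambda\Lambda_n$ is lcc on each stratum of the \emph{same} algebraic stratification, so it is algebraically constructible.

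The main obstacle — though it is more a bookkeeping point than a genuine difficulty — is ensuring the bound $N$ is \emph{uniform}: one must pass to a stratification fine enough that every $F_n$ (not just $F_0$) is locally constant, and then observe that the adic condition forces only finitely many isomorphism types of finitely generated $\Lambda$-modules to appear (one per connected component of a stratum, of which there are finitely many), so that $\sup_\alpha N(M_\alpha)<\infty$. The other point requiring a little care is that $\mathrm{Ker}(\lambda^N)$ and $-\otimes_\Lambda\Lambda_n$ commute with the restriction to strata and with stalks, which is where the exactness of pullback along the inclusion of a stratum (as in \ref{constr=serre}) is used.
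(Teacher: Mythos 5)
Your core argument is the same as the paper's: choose a stratification on each stratum of which the sheaf is lcc with finitely generated stalks, bound the $\lambda$-torsion exponent stratum by stratum via the structure theory of finitely generated modules over the complete DVR $\Lambda$, take the maximum over the finitely many strata, and use exactness of restriction to strata (and of taking stalks) to see that $F/\mathrm{Ker}(\lambda^N)$ is flat; for the second assertion of (i) the paper simply observes that $F\otimes\Lambda_n=\mathrm{Coker}(\lambda^{n+1}\colon F\to F)$ and invokes the Serre-subcategory property (\ref{constr=serre}), while you check lcc-ness on the same strata directly -- both are fine, and your order of deduction ((ii) first, then (i) by viewing an algebraically constructible sheaf as analytically constructible) is an inessential rearrangement.

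The one point you should fix is the identification of the objects. In this lemma $F$ is \emph{not} an adic projective system $(F_n)_n$: the category $\fk{Mod}_c(\Lambda_{X(\bb C)})$ was introduced in (\ref{complex-derived}) precisely to denote honest constructible sheaves of $\Lambda$-modules in the topological setting (constant coefficient ring $\Lambda$), as opposed to the category $\Lambda\text{-Sh}_c$ of $\Lambda$-adic sheaves; likewise an ``analytically constructible $\Lambda_X$-module'' in (ii) is a single sheaf of $\Lambda$-modules, constructible in the sense of \cite{Dim}. Consequently the opening step of your argument -- refining to a common analytic stratification adapted to \emph{all} the levels $F_n$ -- is not needed, and its justification (``$F$ is adic, so $F_n$ is determined by $F_0$'') would in any case be false for genuine adic systems: adicness gives $F_{n-1}\simeq F_n\otimes\Lambda_{n-1}$, so local constancy of $F_0$ does not propagate upward (one can build an adic system with $F_0$ constant and $F_1$ not locally constant, e.g.\ $F_n=\Lambda_n/j_!(\lambda\Lambda_n)$ for an open immersion $j$). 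For the objects actually in question, constructibility already provides a single finite stratification on which $F$ is lcc with finitely generated stalks, and from that point on your proof coincides with the paper's.
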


\begin{proof}
(i) This is a consequence of (\cite{BBD}, 6.1.2, $(A'')$) and
(\cite{SGA4.5}, Rapport, 2.8). But since the sketchy proof of
(\cite{BBD}, 6.1.2, $(A'')$) is not very clear to me, we
prove (i) directly.

By definition there is a stratification $\s S$ of $X$ such
that for each stratum $i_V:V\hookrightarrow X,$ the sheaf
$i_V^*F$ on $V(\bb C)$ is a lcc $\Lambda_{V(\bb C)}$-module.
Thus there exists an integer $n_V\ge0$ such that $i_V^*F/\text{Ker}
(\lambda^{n_V}\text{ on }i_V^*F)$ is flat. To conclude, we take
$N=\max_V\{n_V\}$ and use the fact that $i_V^*$ is exact
(hence preserves $``\text{Ker}(\lambda^N)"$).

The second statement follows from (\ref{constr=serre}), as
$F\otimes\Lambda_n=\text{Coker}(\lambda^{n+1}\text{ on }F).$

(ii) The same proof applies; $\s S$ is now an analytic
stratification, with finitely many strata in it.
\end{proof}

Now we assume that our analytic stack $\fk X$ is the analytification
of an algebraic one $\mathcal X$ (except in (\ref{vanishing}), which
apply to non-algebraic ones too). Next we show that $R\pi_*$ and
$L\pi^*$ preserve constructibility. This depends in an essential
way on the lisse-analytic topology, as the corresponding statement
in the algebraic category is false (cf.\ (\cite{LO2}, 3.0.16)).

\begin{subproposition}\label{P-con}
The functors $R\pi_*$ and $L\pi^*$ restrict to functors
between $\s D_c(\s A(\fk X))$ and $\s D_c(\fk X,\Lambda).$
\end{subproposition}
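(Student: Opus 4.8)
The plan is to reduce the statement to the corresponding fact on an analytic presentation, where everything becomes a question about analytic spaces, and then invoke the key input (\ref{A''}) together with the finite cohomological dimension of $R\pi_{X*}$. First I would treat $R\pi_*$. Let $P:X\to\mathcal X$ be a presentation with analytification $P^{\text{an}}:X(\bb C)\to\fk X$. Given $M\in\s D_c(\s A(\fk X))$, I want to show $R\pi_*M\in\s D_c(\fk X,\Lambda)$; by (\ref{L-analytic-constr}) and the definition of $\s D_c(\fk X,\Lambda)$, it suffices to check that $(R\pi_*M)_{X(\bb C)}$ has constructible cohomology sheaves on $X(\bb C)$. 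By (\ref{LO2-2.2.1} i), $(R\pi_*M)_{X(\bb C)}=R\pi_{X*}(M_{X(\bb C)})$, and since $M$ is a $\lambda$-complex, $M_{X(\bb C)}$ is again (after passing to levels) an object whose cohomology systems are AR-adic with constructible components on $X(\bb C)$. So the problem is: for $N\in\s D(X(\bb C)^{\bb N},\Lambda_\bullet)$ a $\lambda$-complex on an analytic space, $R\pi_{X*}N$ has constructible cohomology. Because $R\pi_{X*}$ has finite cohomological dimension (\ref{analytic-normalization}), truncation lets me reduce to $N$ a single AR-adic system, and then (replacing by an adic system via (\cite{SGA5}, V, 3.2.3)) to an actual adic system $(F_n)_n$; here (\ref{A''} i) says that up to AR-isomorphism $F$ is "flat modulo $\lambda^N$-torsion", so $R\pi_{X*}(F)$ is computed by an honest $\varprojlim$ of the constructible sheaves $F_n$, and on each algebraic stratum of $X(\bb C)$ the system is eventually lcc, so the limit is lcc there — hence constructible.

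For $L\pi^*$, the argument is shorter. Given $M\in\s D_c(\fk X,\Lambda)$, I would again check constructibility of $(L\pi^*M)_{X(\bb C)}$, which by (\ref{LO2-2.2.1} ii) equals $L\pi_X^*(M_{X(\bb C)})$. Since $L\pi_X^*$ has cohomological dimension $1$ and $M_{X(\bb C)}$ has constructible cohomology sheaves on $X(\bb C)$, a standard spectral sequence / way-out argument reduces to the case of a single constructible $\Lambda$-sheaf $F$ on $X(\bb C)$, where $L\pi_X^*F$ has cohomology sheaves $F\otimes_\Lambda\Lambda_n$ in degree $0$ (and the $\lambda$-torsion-related sheaf in degree $-1$); these are constructible by the second assertion of (\ref{A''} i) together with the fact (\ref{constr=serre}) that constructible $\Lambda_n$-sheaves form a Serre subcategory. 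That $L\pi^*M$ is moreover a $\lambda$-complex, i.e. its cohomology systems are AR-adic, follows because normalization is the composite with $R\pi_*$ and one checks the adic condition levelwise via (\ref{LO2-2.2.1} iii); alternatively one cites the analytic analogue of (\cite{LO2}, 3.0.10) which was asserted to hold verbatim.

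The one point that needs genuine care — and which I'd flag as the main obstacle — is the passage from a presentation back to $\fk X$, i.e. making sure that "constructible on $X(\bb C)$ for one presentation" is the right notion and is detected correctly through $R\pi_{X*}$ and $L\pi_X^*$. This is exactly what (\ref{LO2-2.2.1} i,ii) are for (restriction commutes with $R\pi_*$, $L\pi^*$), and what (\ref{L-analytic-constr}) is for (checking constructibility on a single presentation suffices), so the structure is clean; but one must be careful that the stratification witnessing constructibility can be taken compatible with the simplicial structure, or equivalently that one works with the algebraic stratifications pulled back from $\mathcal X$. I would phrase the reduction so that the stratification is fixed on $\mathcal X$ from the start (as in the definition of $\s D_{\s S,\mathcal L}$), so that the only analytic input needed is the behaviour of $\varprojlim$ and $-\otimes\Lambda_n$ on lcc $\Lambda$-adic systems over a point, which is elementary. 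Everything else is a transcription of (\cite{LO2}, 3.0.11, 3.0.12) to the lisse-analytic setting, where the crucial improvement over the lisse-\'etale case — that $R\pi_*$ and $L\pi^*$ really do preserve constructibility — is powered precisely by the finite cohomological dimension of $R\pi_{X*}$ coming from (\cite{Dim}, 3.1.7, 3.4.1).
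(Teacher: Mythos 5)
Your outline for $L\pi^*$ is essentially the paper's argument (restrict to a presentation via (\ref{LO2-2.2.1} ii), use that $L\pi^*$ has cohomological dimension $1$, and split into the flat and $\lambda$-torsion cases via (\ref{A''} i) and (\ref{constr=serre})). But for $R\pi_*$ there are two genuine gaps. First, your reduction ``it suffices to check that $(R\pi_*M)_{X(\bb C)}$ has constructible cohomology'' is not available at the outset: constructibility in the sense of (\ref{alg-constr}) includes being Cartesian, and (\ref{L-analytic-constr}) only lets you test constructibility on one presentation \emph{for a sheaf already known to be Cartesian}. Nothing you cite shows that the cohomology sheaves of $R\pi_*M$ are Cartesian, i.e.\ that for an arbitrary smooth $f:U\to V$ in $\text{Lis-an}(\fk X)$ the transition map $f^*R\pi_{V*}M_V\to R\pi_{U*}M_U$ is an isomorphism; (\ref{LO2-2.2.1} i) only computes restrictions, it does not compare them along non-open morphisms. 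This is exactly the content of the paper's Lemma (\ref{limit-cart}), whose proof is a genuine smooth base change statement for $R\pi$ in the analytic setting, reduced to a projection $V\times Z\to V$ and settled by the K\"unneth formula for polydisks; this step has no counterpart in your proposal.

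Second, on the presentation your treatment of the adic system is too quick. (\ref{A''} i) concerns a single constructible $\Lambda_{X(\bb C)}$-module, not an adic system, so it does not yield the claim that $R\pi_{X*}F$ is ``an honest $\varprojlim$''; and constructibility of $\varprojlim_nF_n$ cannot be checked stratum by stratum, since the inverse limit does not commute with restriction to locally closed strata (nor is the vanishing of $R^i\pi_{X*}$ for $i>0$ automatic). The paper instead algebraizes the adic system by (\cite{BBD}, 6.1.2 $(A')$), stratifies the underlying scheme, uses the triangle $i^{\bb N}_*Ri^{\bb N,!}M\to M\to Rj^{\bb N}_*M_{U(\bb C)}$ together with (\ref{fact1}), noetherian induction, and the fact that $Rj_*$ preserves analytic constructibility (\cite{Dim}, 4.1.5), thereby reducing to lcc components; and in the lcc case the vanishing $R^i\pi_*M=0$ for $i\ne0$ (Lemma (\ref{vanishing})) rests on the local contractibility of analytic spaces (\cite{BV}, 4.4) and finiteness of the $H^0(U,M_n)$ killing $\varprojlim^1$. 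These are the essential analytic inputs — finite cohomological dimension alone does not power the result, and the remark in the paper that the analogous statement is \emph{false} for the lisse-\'etale topos (\cite{LO2}, 3.0.16) shows that a ``transcription'' of the algebraic arguments cannot suffice.
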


\begin{proof}
1) We prove that if $M=(M_n)_n\in\s D_c(\s A(\fk X)),$ then
$R^i\pi_*M$ is a constructible $\Lambda_{\fk X}$-module for
each $i.$ Let us start with the following lemma, a little
stronger than needed.

\begin{sublemma}\label{limit-cart}
If $M\in\s D_{\emph{cart}}(\fk X^{\bb N},\bb Z),$ then
$R\pi_*M\in\s D_{\emph{cart}}(\fk X,\bb Z).$
\end{sublemma}

\begin{proof}
Let $f:U\to V$ be a morphism in $\text{Lis-an}(\fk X).$ It
induces a commutative diagram of topoi
$$
\xymatrix@C=.8cm @R=.6cm{
U_{\text{an}}^{\bb N} \ar[r]^-{f^{\bb N}} \ar[d]_-{\pi_U} &
V_{\text{an}}^{\bb N} \ar[d]^-{\pi_V} \\
U_{\text{an}} \ar[r]_-f & V_{\text{an}}.}
$$
In particular, we have the base change morphism
$$
bc:f^*R\pi_{V*}M_V\to R\pi_{U*}f^{\bb N,*}M_V.
$$
The transition morphism $\Theta_f:f^*(R\pi_*M)_V\to(R\pi_*M)_U$
is the composition (via (\ref{LO2-2.2.1} i))
$$
\xymatrix@C=1.2cm{
f^*R\pi_{V*}M_V \ar[r]^-{bc} & R\pi_{U*}f^{\bb N,*}M_V
\ar[r]^-{R\pi_{U*}(\theta_f)} & R\pi_{U*}M_U,}
$$
where $\theta_f$ is the transition morphism for $(M_n)$
and is an isomorphism (\ref{cart-complex}). We need to
show that the base change morhpism is an isomorphism.

Since $R\pi_{U*}$ and $R\pi_{V*}$ have finite
cohomological dimensions, by taking $\s H^i$ of both sides
of the base change morphism, we may assume that $M$ is essentially
bounded (i.e.\ the projective system
$\s H^i(M)$ is AR-null for $|i|\gg0$), or even a projective system of
Cartesian sheaves (\ref{constr=serre}).

A standard fiber-product argument reduces us to assume
that $f$ is smooth (cf.\ (\cite{LMB}, 12.3.1)). Since the problem
is local on $U,$ we may assume that $f$ is isomorphic to the
projection $\text{pr}_1:V\times Z\to V$ from a product
of $V$ with a complex manifold $Z.$

Let us denote by $f^*_{ps}$ the presheaf inverse image functor,
and by $\s R^i\pi_{U*}M_U$ the presheaf that assigns to an
open $U'\subset U$ the group $H^i(\pi_U^*U',M_U),$ and
similarly for $\s R^i\pi_{V*}M_V.$ Then we have morphisms of
presheaves on $U:$
$$
f^*_{ps}\s R^i\pi_{V*}M_V\to\s R^i\pi_{U*}f_{ps}^{\bb N,*}M_V
\to\s R^i\pi_{U*}f^{\bb N,*}M_V
$$
from which the base change morphism for cohomology sheaves
$R^i\pi_{V*}M_V$ is deduced by sheafification. Explicitly,
to an open $U'\subset U,$ the left-hand side
assigns the colimit of the direct system $H^i(\pi_V^*V',M_V)$
parametrized by open sets $V'\subset V$ containing $f(U'),$
and the right-hand side assigns $H^i(\pi_U^*U',f^{\bb
N,*}M_V),$ and the morphism of presheaves is induced from
the natural maps
$$
H^i(V',M_{n,V})\to H^i(U',f^*M_{n,V})
$$
given by the mapping $f|_{U'}:U'\to V'.$ To show that this
morphism of presheaves sheafifies to an isomorphism, it
suffices (by checking stalks) to show that it is an isomorphism
on a topological basis on $U,$ which can be taken to be open
sets of the form $U'=V'\times Z',$ with $V'\subset V$ and
$Z'\subset Z$ open, and the $Z'$'s are polydisks. Then
$V'=f(U')$ is open, and by the K\"unneth formula, the morphism
$$
H^i(V',M_{n,V})\to H^i(U',M_{n,V}\boxtimes\bb Z)
$$
is an isomorphism, since $R\Gamma(Z',\bb Z)=\bb Z.$
\end{proof}

Now let $M\in\s D_c(\s A(\fk X)).$ Since $R\pi_*M\in\s
D_{\text{cart}}(\fk X,\Lambda),$ to show that it is constructible,
by (\ref{L-analytic-constr}, \ref{LO2-2.2.1} i) we may pass
to an algebraic presentation $X(\bb C)\to\fk X,$ so we may assume that
$\fk X=X(\bb C)$ for some $\bb C$-scheme $X.$ Since $R\pi_*$
has finite cohomological dimension, we may assume that $M$ is an
AR-adic projective system with constructible components. By
(\cite{SGA5}, V, 3.2.3) we may assume that $M$ is an adic system.

Next we will reduce to the case where $M$ has lcc components.
Applying (\cite{BBD}, 6.1.2 $(A')$) to the components
$M_n$ and the transition maps $\rho_n:M_n\to M_{n-1},$ we
see that $M$ is algebraic, i.e.\ it is the analytification of
an adic system $\widetilde{M}=(\widetilde{M}_n,\widetilde{\rho}_n)$ with
constructible components on $X_{\text{\'et}}.$ By (\cite{SGA4.5},
Rapport, 2.5), there exists a stratification of $X$ over each
stratum of which $\widetilde{M}$ is lisse. Let $j:U\hookrightarrow
X$ be an open stratum, with complement $i:Z\to X.$ Then
$M_{n,U(\bb C)},$ being the analytification of the lcc sheaf
$\widetilde{M}_{n,U_{\text{\'et}}},$ is lcc; it corresponds to the representation
$$
\xymatrix@C=.5cm{
\pi_1(U(\bb C)) \ar[r] & \pi_1^{\text{\'et}}(U) \ar[r] &
\text{GL}(\widetilde{M}_{n,U_{\text{\'et}},\overline{u}}),}
$$
and $U(\bb C)$ is locally contractible (\cite{BV}, 4.4).
We apply $R\pi_*$ to the exact triangle
$$
\xymatrix@C=.5cm{
i^{\bb N}_*Ri^{\bb N,!}M \ar[r] & M \ar[r] & Rj^{\bb
N}_*M_{U(\bb C)} \ar[r] &}
$$
to obtain (by (\ref{fact1}))
$$
\xymatrix@C=.5cm{
i_*R\pi_{Z(\bb C)*}Ri^{\bb N,!}M \ar[r] & R\pi_*M \ar[r] &
Rj_*R\pi_{U(\bb C)*}M_{U(\bb C)} \ar[r] &.}
$$
We have $Ri^{\bb N,!}M\in\s D_c(\s A(Z(\bb C))),$ so as noetherian
induction hypothesis, we may assume that $R\pi_{Z(\bb C)*}Ri^{\bb N,!}M\in\s
D_c(Z(\bb C),\Lambda),$ and hence $i_*R\pi_{Z(\bb C)*}Ri^{\bb N,!}M\in\s
D_c(X(\bb C),\Lambda).$ Recall that $M$ is assumed to be a system of
sheaves and $R\pi_{U(\bb C)*}$ has finite cohomological dimension,
therefore $R\pi_{U(\bb C)*}M_{U(\bb C)}$ is bounded, and since
$Rj_*$ preserves constructibility (\cite{Dim}, 4.1.5), we
reduce to the case where all $M_n$'s are lcc.

\begin{sublemma}\label{vanishing}
Under the assumption that the $M_n$'s are lcc on $\fk X,$ we
have $R^i\pi_*M=0$ for $i\ne0.$
\end{sublemma}

\begin{proof}
By (\ref{LO2-2.2.1} i) we may pass to an analytic presentation
$X\to\fk X,$ so assume that $\fk X=X$ is an analytic space. Then
$R^i\pi_*M$ is the sheaf on $X_{\text{an}}$ associated to the
presheaf that assigns to each open set $U\subset X$ the group
$H^i(\pi^*U,M).$ We only need to consider those open sets $U$
which are contractible, since they generate a basis
for the topology of $X,$ by (\cite{BV}, 4.4). Then $M_n|_U,$
a priori locally constant, are constant sheaves defined by
finite sets, hence $H^i(U,M_n)=0$ for $i\ne0,$ and $H^0(U,M_n)$
are finite so that $\varprojlim_n^1H^0(U,M_n)=0.$ The result then
follows from (\cite{LO2}, 2.1.i).
\end{proof}

It remains to show that $\pi_*M$ is constructible;
in fact it is a lcc $\Lambda_{X(\bb C)}$-module. To see
this, we may replace $X(\bb C)$ by contractible open subsets
by (\ref{analytic-9.1}), hence assume that each $M_n$
is constant. Then $\pi_*M=\varprojlim_nM_n$ is also
constant, because this sheaf limit is just the presheaf limit.

\vskip.3truecm

2) Now we prove that if $F\in\s D_c(\fk X,\Lambda),$ then
$L\pi^*F\in\s D_c(\s A).$

First let $F\in\s D_{\text{cart}}(\fk X,\Lambda),$ and let us
show that $L\pi^*F$ is Cartesian, i.e.\ for all $i,$ each component of
the projective system $L^i\pi^*F$ is Cartesian. Since $L\pi^*$ has
finite cohomological dimension we may assume that $F$ is a Cartesian
sheaf, by (\ref{constr=serre}). Then
$(L\pi^*F)_n=F\otimes^L_{\Lambda}\Lambda_n$ is represented by the
complex $F\overset{\lambda^{n+1}}{\longrightarrow}F,$ both the kernel
and cokernel of which are Cartesian sheaves, by (\ref{constr=serre}).

Now let $F\in\s D_c(\fk X,\Lambda),$ and let us show that $L\pi^*F$
is a $\lambda$-complex, i.e.\ each cohomology $L^i\pi^*F$ is
an AR-adic system of constructible sheaves. By
(\ref{L-analytic-constr}, \ref{LO2-2.2.1}), this can be checked on
an algebraic presentation $X(\bb C),$ so we assume that $\fk X=X(\bb C)$
and $F\in\fk{Mod}_c(\Lambda_{X(\bb C)}).$ By (\ref{A''} i)
we reduce to two cases: $F$
is flat, or $F$ is annihilated by $\lambda.$

If $F$ is flat, then $L\pi^*F=\pi^*F$ is the adic sheaf
$(F\otimes\Lambda_n)_n,$ with constructible components (\ref{A''} i)
(even with respect to the same algebraic stratification
for $F$).

If $\lambda F=0,$ then using the following projective system
of $\Lambda$-flat resolutions of the $\Lambda_n$'s
$$
\xymatrix@C=.9cm @R=.5cm{
0 \ar[r] & \Lambda \ar[r]^-{\lambda^{n+1}} \ar[d]_-{\lambda} &
\Lambda \ar[r] \ar[d]_-1 & \Lambda_n \ar[r] \ar[d] & 0 \\
0 \ar[r] & \Lambda \ar[r]^-{\lambda^n} & \Lambda \ar[r] & \Lambda_{n-1}
\ar[r] & 0}
$$
we see that $L\pi^*F$ is represented by the following complex of systems:
$$
\xymatrix@C=.8cm @R=.5cm{
0 \ar[r] & F \ar[r]^-0 \ar[d]^-0 & F \ar[d]^-1 \ar[r] & 0 \\
0 \ar[r] & F \ar[r]^-0 & F \ar[r] & 0.}
$$
Therefore, $\s H^0(L\pi^*F)$ is
$$
\xymatrix@C=.5cm{
\cdots \ar[r]^-1 & F \ar[r]^-1 & F \ar[r]^-1 & \cdots}
$$
which is adic with constructible components, and $\s H^{-1}(L\pi^*F)$ is
$$
\xymatrix@C=.5cm{
\cdots \ar[r]^-0 & F \ar[r]^-0 & F \ar[r]^-0 & \cdots}
$$
which is AR-null (hence AR-adic) with constructible components.
\end{proof}

By (\ref{analytic-LO2-2.2.2}) the functor $R\pi_*:\s D_c(\s A(\fk
X))\to\s D_c(\fk X,\Lambda)$
factors through the quotient category $D_c(\fk X,\Lambda):$
$$
\xymatrix@C=1cm{
\s D_c(\s A) \ar[r]^-Q & D_c(\fk X,\Lambda)
\ar[r]^-{\overline{R\pi}_*} & \s D_c(\fk X,\Lambda).}
$$
Thus the normalization functor, when restricted to $\s D_c(\s A),$
factors through $D_c(\fk X,\Lambda),$ and for $K\in D_c(\fk
X,\Lambda)$ we still denote $L\pi^*\overline{R\pi}_*K$ by $\widehat{K}.$

\begin{subproposition}\label{agree}
\emph{(i)} The functors $(Q\circ L\pi^*,\overline{R\pi}_*)$
induce an equivalence $D_c(\fk X,\Lambda)
\longleftrightarrow\s D_c(\fk X,\Lambda).$

\emph{(ii)} Let $f:\mathcal X\to\mathcal Y$ be a morphism of
complex algebraic stacks, and let $f^{\emph{an}}:
\mathfrak X\to\mathfrak Y$ be its analytification. Then
the following diagram commutes:
$$
\xymatrix@C=1cm @R=.8cm{
D_c^+(\mathfrak X,\Lambda) \ar[r]^-{\overline{R\pi}
_{\mathcal X,*}} \ar[d]_-{f_*^{\emph{an}}} & \s
D_c^+(\mathfrak X,\Lambda) \ar[d]^-{f_*^{\emph{an}}} \\
D_c^+(\mathfrak Y,\Lambda) \ar[r]^-{\overline{R\pi}
_{\mathcal Y,*}} & \s D_c^+(\mathfrak Y,\Lambda).}
$$
\end{subproposition}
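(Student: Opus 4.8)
The plan is to reduce part (i) to the normalization formalism of \cite{LO2} (whose analytic analogues are available by the remark preceding (\ref{LO2-2.2.1}) and by (\ref{P-con})), and part (ii) to cohomological descent along a simplicial presentation combined with the space-level compatibility (\ref{fact1}). For part (i), I would first record that both functors are well defined between the stated categories: $L\pi^{*}$ and $R\pi_{*}$ carry $\s D_{c}(\s A(\fk X))$ and $\s D_{c}(\fk X,\Lambda)$ into each other by (\ref{P-con}), and $R\pi_{*}$ kills $AR$-null complexes by (\ref{analytic-LO2-2.2.2}), hence descends to $\ol{R\pi}_{*}$ on $D_{c}(\fk X,\Lambda)$. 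Since $(L\pi^{*},R\pi_{*})$ is an adjoint pair on the ambient derived categories, it then suffices to check that the two round-trip composites are naturally isomorphic to the identity, i.e.\ that (a) for $F\in\s D_{c}(\fk X,\Lambda)$ the unit $F\to R\pi_{*}L\pi^{*}F=\ol{R\pi}_{*}(Q\circ L\pi^{*})F$ is an isomorphism, and (b) for $M\in\s D_{c}(\s A(\fk X))$ the counit $\widehat M=L\pi^{*}R\pi_{*}M\to M$ becomes an isomorphism in $D_{c}(\fk X,\Lambda)$, i.e.\ has $AR$-null cone. Statement (b) is the analytic analogue of (\cite{LO2}, 3.0.10, 3.0.11), which holds verbatim as noted before (\ref{LO2-2.2.1}); failing that, I would prove it by the d\'evissage used in the proof of (\ref{P-con}): reduce by (\ref{LO2-2.2.1}) and (\cite{SGA5}, V, 3.2.3) to $\fk X=X(\bb C)$ with $M$ an adic system, then by (\ref{A''} i) to the cases where $M$ is flat-adic or annihilated by a power of $\lambda$, in both of which $\widehat M\to M$ is plainly an $AR$-isomorphism. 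For (a), I would use that $L\pi^{*}$ and $R\pi_{*}$ have finite cohomological dimension (\ref{analytic-normalization}) to reduce, via (\ref{P-con}) and truncation, to $F$ a single constructible $\Lambda_{\fk X}$-module, pass to an algebraic presentation by (\ref{LO2-2.2.1}), and by (\ref{A''} i) reduce further to $F$ flat or killed by a power of $\lambda$; in either case the claim can be checked on the basis of contractible opens supplied by (\cite{BV}, 4.4), on which $L\pi^{*}F$ has constant components and $R\varprojlim$ of the resulting Mittag--Leffler system recovers $F$, exactly as in the proofs of (\ref{vanishing}) and (\ref{P-con}). This gives that $(Q\circ L\pi^{*},\ol{R\pi}_{*})$ is an equivalence.

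For part (ii), I would choose a commutative square of smooth simplicial presentations $X_{\bullet}\to\mathcal X$ and $Y_{\bullet}\to\mathcal Y$ lifting $f$, as in (\ref{P-compar}), with analytifications lifting $f^{\text{an}}$. Both $f^{\text{an}}_{*}$ on $D_{c}^{+}(\fk X,\Lambda)$ and $f^{\text{an}}_{*}$ on $\s D_{c}^{+}(\fk X,\Lambda)$ are computed through these hypercovers by cohomological descent. At each simplicial level one has a morphism of analytic spaces, so (\ref{fact1}) yields the compatibility $Rf^{\text{an}}_{p*}\circ R\pi_{*}\cong R\pi_{*}\circ Rf^{\text{an},\bb N}_{p*}$; since $R\pi_{*}$ has finite cohomological dimension (\ref{analytic-normalization}) and preserves constructibility (\ref{P-con}), it commutes with the (bounded below, hence convergent) descent spectral sequences, just as $R\xi_{*}$ does in the proof of (\ref{P-compar} iii). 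Comparing the two spectral sequences through $\ol{R\pi}_{*}$ then gives the asserted identity $\ol{R\pi}_{\mathcal Y,*}\circ f^{\text{an}}_{*}\cong f^{\text{an}}_{*}\circ\ol{R\pi}_{\mathcal X,*}$ on $D_{c}^{+}$; note that only the space-level (\ref{fact1}) is needed here, not its stacky generalization.

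The essential input is (\ref{P-con}); granting it, the only genuine work is in part (i), namely the verifications (a) and (b) above. I expect the main obstacle to be that both of these rest on the local contractibility of $X(\bb C)$ (\cite{BV}, 4.4) --- which is precisely the reason the analogous comparison fails in the algebraic category --- together with the flat/$\lambda$-power-torsion d\'evissage (\ref{A''} i), so the argument must be arranged so that these reductions genuinely apply. For part (ii) the only delicate point is bookkeeping: matching the two recipes for $f^{\text{an}}_{*}$ along a common hypercover and commuting $R\pi_{*}$ past cohomological descent, which is harmless on $D_{c}^{+}$.
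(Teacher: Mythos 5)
Your treatment of part (ii) and of the counit (your step (b)) matches the paper's route: the counit statement is proved there as (\ref{analytic-LO2-3.0.14}), the analytic analogue of (\cite{LO2}, 3.0.14) --- not of 3.0.10/3.0.11, which are the statements recorded in (\ref{LO2-2.2.1}) --- by passing to $X(\bb C)$, algebraizing the adic system via (\cite{BBD}, 6.1.2 $(A')$) and splitting into the flat and $\lambda$-torsion cases; and (ii) is proved by descent via (\ref{P-compar} i) down to analytifications of schemes together with the commutative square of topoi as in (\ref{fact1}).

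The genuine gap is in your step (a), the unit $F\to R\pi_*L\pi^*F$. After reducing to a single flat constructible $\Lambda$-sheaf $F$ on an analytic space $X$, you propose to conclude by checking on a basis of contractible opens, ``on which $L\pi^*F$ has constant components.'' This fails in general: constructibility only makes the sheaves $F\otimes\Lambda_n$ lcc on the strata of a stratification, and on a small contractible neighborhood of a point of a lower stratum they are not locally constant, so the finiteness/Mittag--Leffler computation of (\ref{vanishing}) does not apply; local contractibility (\cite{BV}, 4.4) only settles the locally constant case. The paper therefore inserts a d\'evissage that your plan omits: take an open stratum $j:U\hookrightarrow X$ on which $F$ is locally constant, with closed complement $i:Z\hookrightarrow X$, use the triangle $i_*Ri^!F\to F\to Rj_*F_U$ and noetherian induction for the $i_*$-term; the $Rj_*$-term requires the nontrivial commutation $L\pi_X^*\circ Rj_*\simeq Rj_*^{\bb N}\circ L\pi_U^*$, i.e.\ $Rj_*(\Lambda_n\otimes^LF)\simeq\Lambda_n\otimes^LRj_*F$, which the paper establishes with $K$-injective resolutions (\cite{Spa}), plus preservation of constructibility by $Rj_*$ (\cite{Dim}, 4.1.5). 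Without this step the unit isomorphism cannot be propagated from the open stratum to all of $X$, so your argument for (a) as written does not go through; only after this reduction does the constant-sheaf case (where your contractible-opens argument is correct, as in (\ref{vanishing})) finish the proof.
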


\begin{proof}
(i) We will show that the adjunction and coadjunction
maps are isomorphisms. For coadjunction maps, this is the
analytic version of (\cite{LO2}, 3.0.14).

\begin{sublemma}\label{analytic-LO2-3.0.14}
Let $M\in\s D_c(\s A(\mathfrak X)).$ Then the coadjunction map
$\widehat{M}\to M$ has an AR-null cone.
\end{sublemma}

\begin{proof}
It can be proved in the same way as (\cite{LO2}, 3.0.14).
We go over the proof briefly. First note that, if
$$
\xymatrix@C=.5cm{
M' \ar[r] & M \ar[r] & M'' \ar[r] & M'[1]}
$$
is an exact triangle in $\s D_c(\s A)$ and the coadjunction map
is an isomorphism for two vertices, then it is so for the third.
In particular, by (\ref{analytic-LO2-2.2.2}), if $M$ and $M'$ in
$\s D_c(\s A)$ are AR-isomorphic (that is, their images in $D_c(\fk X,\Lambda)$ are isomorphic), then the coadjunction map is an
isomorphism for $M$ if and only if it is so for $M'.$

By (\ref{LO2-2.2.1}), we may pass to an algebraic presentation
$P:X(\bb C)\to\fk X,$ so assume that $\fk X=X(\bb C).$ The normalization
functor has finite cohomological dimension,
so one can assume that $M$ is a $\lambda$-module.
By (\cite{SGA5}, V, 3.2.3)
we may assume that $M$ is an adic system with constructible
components. Therefore, $M$ is algebraic by
(\cite{BBD}, 6.1.2 $(A')$), and by (\cite{SGA4.5}, Rapport,
2.8) one reduces to two cases: $M$ is flat (i.e.\ each component
$M_n$ is a flat $\Lambda_n$-sheaf), or $\lambda M=0,$ i.e.\
$M$ is AR-isomorphic to (hence we may assume that it is) the constant system
$(M_0)_n.$

If $M$ is flat, then the natural map
$$
M_n\otimes^L_{\Lambda_n}\Lambda_{n-1}\simeq M_n\otimes
_{\Lambda_n}\Lambda_{n-1}\overset{\sim}{\to}M_{n-1}
$$
is an isomorphism, so by (\ref{LO2-2.2.1} iii) $M$ is normalized,
hence the cone of $\widehat{M}\to M$ is zero.

If $M$ is the constant adic system $(M_0)_n,$ then
$R\pi_*M=M_0$ by (\cite{LO2}, 2.2.3). We saw in the proof
of (\ref{P-con}) that $\s H^0(L\pi^*M_0)$ is $(M_0)_n$ and that
$\s H^{-1}(L\pi^*M_0)$ is AR-null. Therefore, the natural
map $L\pi^*M_0\to M$ is an AR-isomorphism.
\end{proof}

Then we prove the following, slightly general than needed.

\begin{sublemma}\label{gg}
Let $\fk X$ be an analytic stack (not necessarily algebraic),
and let $F\in\s D_c(\fk X,\Lambda)$ be a complex with
analytically constructible cohomology sheaves. Then the
adjunction map $F\to R\pi_*L\pi^*F$ is an isomorphism.
\end{sublemma}

\begin{proof}
Let us denote $R\pi_*L\pi^*F$ by
$\check{F}.$ Note that if $F'\to F\to F''\to F'[1]$ is an
exact triangle, and the adjunction map is an isomorphism
for two vertices, then it is so for the third.

That the map $F\to\check{F}$ is an isomorphism is a
local property, since it is equivalent to the vanishing
of all the cohomology sheaves of the cone, which can be
checked locally. So by (\ref{LO2-2.2.1} ii), we may replace
$\fk X$ by an analytic presentation $X.$
Since the functor $F\mapsto\check{F}$ has finite
cohomological dimension, we may assume that $F$ is a sheaf.
By (\ref{A''} ii) we reduce to two cases: $F$ is flat, or
$F$ is annihilated by $\lambda.$ The second case follows from
(\cite{LO2}, 2.2.3), so we assume that $F$ is flat.

We want to reduce to the case
when $F$ is locally constant. Let $j:U\hookrightarrow
X$ be the open immersion of a subspace over
which $F$ is locally constant, and let $i:Z\hookrightarrow
X$ be the complement. Consider the exact triangle
$$
\xymatrix@C=.6cm{
i_*N \ar[r] & F \ar[r] & Rj_*F_U \ar[r] &,}
$$
where $N=Ri^!F$ is in $\s D^b_c(Z,\Lambda)$ by (\cite{Dim}, 4.1.5 i).
It suffices to show that the adjunction maps for $i_*N$ and
$Rj_*F_U$ are isomorphisms.

By (\ref{fact1}) we have $R\pi_{X,*}\circ i_*^{\bb N}\simeq i_*\circ
R\pi_{Z,*}.$ Also we have $L\pi^*_X\circ i_*\simeq i_*^{\bb
N}\circ L\pi^*_Z,$ since $i_*$ is extension by zero (that $i_*(N\otimes^L_{\Lambda}\Lambda_n)\simeq
i_*N\otimes^L_{\Lambda}\Lambda_n$ also follows from the
projection formula in topology (\cite{Dim}, 2.3.29)). Therefore, the
adjunction map for $i_*N$ on $X$ is obtained by applying
$i_*$ to the adjunction map for $N$ on $Z:$
$$
i_*N\to R\pi_{X,*}L\pi_X^*i_*N\simeq i_*R\pi_{Z,*}L\pi_Z^*N,
$$
which is an isomorphism by noetherian hypothesis.

Again by (\ref{fact1}) we have $R\pi_{X,*}\circ Rj_*^{\bb N}\simeq
Rj_*\circ R\pi_{U,*}.$ We will show that $Rj_*^{\bb N}\circ L\pi_U^*\simeq
L\pi_X^*\circ Rj_*$ on $\s D_c(U,\Lambda),$ even though we
only need this isomorphism for bounded complexes.
Let $F\in\s D_c(U,\Lambda).$ For each $n$ we
have a natural morphism $\Lambda_n\otimes^L_{\Lambda}Rj_*F\to
Rj_*(\Lambda_n\otimes^L_{\Lambda}F).$ Consider the short
exact sequence
$$
\xymatrix@C=.7cm{
0 \ar[r] & \Lambda \ar[r]^-{\lambda^{n+1}} & \Lambda \ar[r] &
\Lambda_n \ar[r] & 0.}
$$
Let $F\to I$ be a $K$-injective resolution of $F$ (cf.\ \cite{Spa}).
Then $\Lambda_n\otimes^L_{\Lambda}I$ is also a $K$-injective
complex (one sees
this by applying (\cite{Spa}, 1.3) to the exact triangle
$$
\xymatrix@C=.7cm{
I \ar[r]^-{\lambda^{n+1}} & I \ar[r] & \Lambda_n\otimes^LI \ar[r] &}
$$
obtained from the short exact sequence above tensored with $I$), and
$j_*(\Lambda_n\otimes^LI)=\Lambda_n\otimes^Lj_*I,$ since by
applying $Rj_*$
to the exact triangle of $K$-injective complexes
$$
\xymatrix@C=.7cm{
I \ar[r]^-{\lambda^{n+1}} & I \ar[r] & \Lambda_n\otimes^LI
\ar[r] &}
$$
we get
$$
\xymatrix@C=.7cm{
j_*I \ar[r]^-{\lambda^{n+1}} & j_*I \ar[r] &
j_*(\Lambda_n\otimes^LI) \ar[r] &},
$$
and by applying $-\otimes^Lj_*I$ to the short exact sequence
above we get
$$
\xymatrix@C=.7cm{
j_*I \ar[r]^-{\lambda^{n+1}} & j_*I \ar[r] &
\Lambda_n\otimes^Lj_*I \ar[r] &.}
$$
Thus by (\cite{Spa}, 5.12, 6.7) we have
$$
Rj_*(\Lambda_n\otimes^LF)=j_*(\Lambda_n\otimes^LI)
=\Lambda_n\otimes^LRj_*F.
$$

Therefore, the adjunction map for $Rj_*F_U$ on
$X$ is obtained by applying $Rj_*$ to the adjunction
map for $F_U$ on $U.$ Hence we may assume that
$F$ is a locally constant sheaf on $X.$ Replacing $X$ by
an open cover, we assume that $F$ is constant, defined
by a free module $\Lambda^r.$ By additivity we may assume that $r=1.$
Then $L\pi^*\Lambda=\Lambda_{\bullet},$ and
$\pi_*\Lambda_{\bullet}=\varprojlim\Lambda_{\bullet}=\Lambda.$
We conclude by applying (\ref{vanishing}) to deduce that
$R^i\pi_*\Lambda_{\bullet}=0$ for $i\ne0.$
\end{proof}

Therefore, $(Q\circ L\pi^*,\overline{R\pi}_*)$ induce an
equivalence between $D_c(\fk X,\Lambda)$ and
$\s D_c(\fk X,\Lambda).$

(ii) If $X_{\bullet}\to\fk X$ is a strictly
simplicial algebraic smooth hypercover, we have
$D_c(\fk X,\Lambda)\simeq D_c(X_{\bullet},\Lambda)$
and $\s D_c(\fk X,\Lambda)\simeq\s D_c(X_{\bullet},\Lambda)$
by (\ref{P-compar} i).
So we may assume that $\fk X=X$ and $\fk Y=Y$ are
analytifications of algebraic schemes. By definition of
$\overline{R\pi}_*,$ it suffices to show that the following
diagram commutes
$$
\xymatrix@C=1cm @R=.7cm {
\s D_c^+(\s A(X)) \ar[r]^-{R\pi_{X,*}} \ar[d]_-{f_*^{\text{an},\bb
N}} & \s D_c^+(X,\Lambda) \ar[d]^-{f_*^{\text{an}}} \\
\s D_c^+(\s A(Y)) \ar[r]^-{R\pi_{Y,*}} & \s D_c^+(Y,\Lambda),}
$$
and this follows from the commutativity of the diagram of topoi
$$
\xymatrix@C=.8cm @R=.6cm{
X_{\text{an}}^{\bb N} \ar[r]^-{\pi_X}
\ar[d]_-{f^{\text{an},\bb N}} & X_{\text{an}} \ar[d]^-{f^{\text{an}}} \\
Y_{\text{an}}^{\bb N} \ar[r]^-{\pi_Y} & Y_{\text{an}}.}
$$
Note that the corresponding diagram for $f^{\text{an}}:\fk
X\to\fk Y$ does not even make sense, since if $f$ is
not smooth, it does not necessarily induce a morphism of
their lisse-analytic topoi.
\end{proof}

\begin{subremark}\label{R-rat}
Similarly, $\overline{R\pi}_{\mathcal X,*}$ induces a fully
faithful functor $D_c(\fk X,\overline{\bb Q}_{\ell})\to\s D_c(\fk
X,\overline{\bb Q}_{\ell}),$ which is compatible with
$f^{\text{an}}_*$ when restricted to $D_c^+.$
\end{subremark}

\begin{subremark}
This result (\ref{agree}), together with (\ref{compar-cohom})
and (\cite{LO2}, 3.1.6), generalizes (\cite{BBD}, 6.1.2, $(B'')$).
Taking their cores, we obtain
$$
\Lambda\text{-Sh}_c(\mathcal X)\simeq\Lambda\text{-Sh}_c(\fk
X)\simeq\fk{Mod}_c(\Lambda_{\fk X}),
$$
generalizing (\textit{loc. cit.}, $(A'')$).
\end{subremark}

\section{Decomposition Theorem over $\bb C$}\label{sec-decomp-C}

Let $(\Lambda,\mathfrak m)$ be a complete DVR as before,
with residue characteristic $\ell\ne2.$ Let $\mathcal X$
be an algebraic stack over $\text{Spec }\bb C.$ We
first prove a comparison theorem between the
lisse-\'etale topoi over $\bb C$ and over $\bb
F,$ and then use this together with (\ref{compar-cohom},
\ref{agree}) to deduce the decomposition theorem for $\bb
C$-algebraic stacks with affine stabilizers.

\subsection{Comparison between the lisse-\'etale topoi
over $\bb C$ and over $\bb F$}

Let $(\s S,\mathcal L)$ be a pair on $\mathcal X$
with $\Lambda_0$-coefficients. By
refining we may assume that all strata in $\s S$ are
essentially smooth and connected. Let $A\subset\bb C$ be a
subring of finite type over $\bb Z,$ large enough so
that there exists a triple $(\mathcal X_S,\s
S_S,\mathcal L_S)$ over $S:=\text{Spec }A$ giving rise to
$(\mathcal X,\s S,\mathcal L)$ by base change, that $\mathcal X_S$ is flat over $S,$ and that
$1/\ell\in A.$ Then $S$ satisfies the condition (LO); the
hypothesis on $\ell$-cohomological dimension follows from
(\cite{SGA4}, X, 6.2). We may shrink $S$ to assume that
strata in $\s S_S$ are smooth over $S$ with
geometrically connected fibers, which is possible because
one can take a presentation $P:X_S\to\mathcal X_S$ and
shrink $S$ so that the strata in $P^*\s S_S$ are
smooth over $S$ with geometrically connected fibers. Let
$a:\mathcal X_S\to S$ be the structural map.

Let $A\subset V\subset\bb C,$ where $V$ is a strict henselian discrete valuation ring
whose residue field $s$ is an algebraic closure of a finite residue field
of $A.$ Let $(\mathcal X_V,\s S_V,\mathcal L_V)$ be the triple
over $V$ obtained by base change, and let $(\mathcal X_s,\s S_s,\mathcal
L_s)$ be its special fiber. Then we have morphisms
$$
\xymatrix@C=1cm{
\mathcal X \ar[r]^-u & \mathcal X_V & \mathcal X_s
\ar[l]_-i.}
$$

\begin{subproposition}\label{6.1.9}\emph{(stack version of
(\cite{BBD}, 6.1.9))}
For $S$ small enough, the functors
$$
\xymatrix@C=1cm{
D^b_{\s S,\mathcal L}(\mathcal X,\Lambda_n) &
D^b_{\s S_V,\mathcal L_V}(\mathcal X_V,\Lambda_n)
\ar[l]_-{u^*_n} \ar[r]^-{i^*_n} & D^b_{\s
S_s,\mathcal L_s}(\mathcal X_s,\Lambda_n)}
$$
and
$$
\xymatrix@C=1cm{
D_{\s S,\mathcal L}^b(\mathcal X,\Lambda) & D_{\s S_V,\mathcal
L_V}^b(\mathcal X_V,\Lambda) \ar[l]_-{u^*} \ar[r]^-{i^*} & D_{\s
S_s,\mathcal L_s}^b(\mathcal X_s,\Lambda)}
$$
are equivalences of triangulated categories with standard
$t$-structures.
\end{subproposition}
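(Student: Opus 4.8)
The plan is to transport the scheme-level argument of (\cite{BBD}, 6.1.9) to stacks, using the generic base change theorem (\ref{Tbc}) in place of \cite{SGA4.5}, Th.\ finitude, and (\ref{L4.1}) for the geometric bookkeeping. First, $u_n^*$, $i_n^*$ and their adic analogues are $t$-exact for the standard $t$-structures, because $\s H^i$ commutes with inverse image on the level of sheaves; so only full faithfulness and essential surjectivity remain to be checked. As in (\cite{BBD}, 6.1), the category $D^b_{\s S_V,\mathcal L_V}(\mathcal X_V,\Lambda_n)$ is generated, as a triangulated category, by the finite family of objects $j_!L$, where $j:\mathcal U\hookrightarrow\mathcal X_V$ runs over the strata and $L$ over the simple lcc $\Lambda_0$-sheaves with class in $\mathcal L_V(\mathcal U)$. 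Since a triangulated functor that induces isomorphisms on all $\mathrm{Hom}$-groups between members of a generating family is fully faithful, for full faithfulness it suffices to treat $K$ and $L$ in this finite family; this is what makes ``$S$ small enough'' attainable, only finitely many shrinkings being involved.

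\emph{Full faithfulness.} For such $K,L$ one has $\mathrm{Hom}(K,L)=H^0R\Gamma(\mathcal X_V,R\s Hom_{\mathcal X_V}(K,L))$, and likewise over $\mathcal X$ and over $\mathcal X_s$. The complex $R\s Hom(K,L)$ is stratifiable: refining $\s S_V$ so that $K,L$ have lcc cohomology on every stratum $\mathcal U$, its cohomology sheaves on $\mathcal U$ are lcc with Jordan--H\"older components in a fixed finite set, uniformly in the degree; hence $R\s Hom(K,L)\in D_c^{+,\mathrm{stra}}(\mathcal X_V,\Lambda_n)$. Spreading $K,L$ out to $K_S,L_S$ over $S$, I would shrink $S$ so that: (a) by (\ref{Tbc} ii) the formation of $R\s Hom(K_S,L_S)$ commutes with the base changes $u$ and $i$; (b) by (\cite{Sun}, 3.9) the complex $Ra_*R\s Hom(K_S,L_S)$ is stratifiable on the scheme $S$, hence, after one further shrinking, has lcc cohomology sheaves on $S$; (c) by (\ref{Tbc} i) its formation commutes with every base change to $S$. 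As $V$ is strictly henselian, $R\Gamma(\mathcal X_V,-)$ is the stalk at the closed point of $\mathrm{Spec}\,V$ of the pushforward to $\mathrm{Spec}\,V$; by (c) that pushforward is pulled back from $S$ with lcc cohomology, and since $\pi_1(\mathrm{Spec}\,V)=1$ its cohomology over $\mathrm{Spec}\,V$ is constant, so its closed-point and generic-point stalks agree. By (a) and (c) the former stalk computes $\mathrm{Hom}(i_n^*K,i_n^*L)$ and the latter computes $\mathrm{Hom}(u_n^*K,u_n^*L)$ — using for the latter the invariance of constructible cohomology under an extension of algebraically closed fields of characteristic $0$. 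Hence all three Hom-groups coincide; the adic case is the same, with the $\Lambda$-version of (\ref{Tbc}) and the standard passage to the limit.

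\emph{Essential surjectivity.} This is now formal, given full faithfulness. Given $M$ in the category over $\mathcal X$ (resp.\ over $\mathcal X_s$), take an open stratum $j:\mathcal U\hookrightarrow\mathcal X$ with closed complement $k:\mathcal Z\hookrightarrow\mathcal X$; these immersions are schematic, and $j_!$, $k_*$ commute with all base change, so in the recollement triangle $k_*k^*M\to M\to j_!j^*M\to k_*k^*M[1]$ both outer terms lie in the prescribed subcategory. If $\widetilde M_{\mathcal U}$ on $\mathcal U_V$ and $\widetilde M_{\mathcal Z}$ on $\mathcal Z_V$ lift $j^*M$ and $k^*M$, then $j_{V!}\widetilde M_{\mathcal U}$ and $k_{V*}\widetilde M_{\mathcal Z}$ lie in $D^b_{\s S_V,\mathcal L_V}(\mathcal X_V,\Lambda_n)$ and pull back under $u$ (resp.\ $i$) to $j_!j^*M$ and $k_*k^*M$; the connecting morphism lifts by full faithfulness, and the cone of the lifted morphism is a lift of $M$. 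By noetherian induction on the number of strata this reduces to the case where $\mathcal X_V$ has a single stratum, $(\mathcal X_V)_{\mathrm{red}}$ smooth over $V$; there one dévisses along the standard truncation, and then along composition series of lcc sheaves (an extension of lcc sheaves by lcc sheaves being again lcc), using full faithfulness to lift the connecting morphisms and extension classes, down to a single simple lcc $\Lambda_0$-sheaf with class in $\mathcal L(\mathcal U)$ (resp.\ $\mathcal L_s(\mathcal U)$) — which lifts to $\mathcal X_V$ by construction, since $\mathcal L=u^*\mathcal L_V$ and $\mathcal L_s=i^*\mathcal L_V$. These equalities also ensure that the lifted objects land in the prescribed subcategories.

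\emph{Main obstacle.} The only non-formal step is full faithfulness, and the point requiring care is the interface with (\ref{Tbc}): one must verify that $R\s Hom(K,L)$ is a stratifiable complex and that its pushforward to $S$ is stratifiable on $S$ (hence generically lisse), so that both halves of the generic base change theorem apply, and one must arrange the finitely many shrinkings of $S$ simultaneously — which is precisely why one first reduces to the finite family of generators. All of the input from the theory over finite fields (Artin's comparison theorem, \cite{SGA4.5} Th.\ finitude) enters only through (\ref{Tbc}) and (\cite{Sun}, 3.9).
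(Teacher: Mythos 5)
Your treatment of the finite-level statement is essentially the paper's own argument: reduce to the finite family of generators $j_!L$, use (\ref{Tbc} ii) for $R\s Hom$, stratifiability of the pushforward to $S$ (\cite{Sun}, 3.9, cf.\ Remark \ref{referee21}) to get lcc cohomology after shrinking, (\ref{Tbc} i) for compatibility with base change, and then the comparison of the three stalks using that $V$ is strictly henselian; essential surjectivity by lifting cones of lifted morphisms (the paper's Lemma \ref{L5.2}) plus truncation and noetherian induction. (Small slip: the recollement triangle is $j_!j^*M\to M\to k_*k^*M\to$, not the order you wrote.)

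The genuine gap is the adic case. Your dévissage ``down to a single simple lcc $\Lambda_0$-sheaf in $\mathcal L(\mathcal U)$'' is only valid for $\Lambda_n$-coefficients: the category $D^b_{\s S,\mathcal L}(\mathcal X,\Lambda)$ is \emph{not} generated as a triangulated category by the objects $j_!L$ with $L$ a $\Lambda_0$-sheaf (every object of the subcategory they generate is killed by a power of $\lambda$, whereas e.g.\ a torsion-free adic sheaf is not), so neither your reduction-to-generators for full faithfulness nor your essential-surjectivity dévissage applies with $\Lambda$-coefficients. For full faithfulness the ``standard passage to the limit'' does work, but it is not free: one needs $Hom_{D_c(\mathcal X_\Box,\Lambda)}(K_\Box,L_\Box)\simeq\varprojlim_n Hom(\widehat K_{n,\Box},\widehat L_{n,\Box})$, which requires the finiteness of the groups $Ext^{-1}(\widehat K_{n,\Box},\widehat L_{n,\Box})$ (condition (ML)) and (\cite{LO2}, 3.1.3); this should be said. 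For essential surjectivity of $u^*$ and $i^*$ a separate argument is needed and is missing from your proposal: after reducing (formally, via \ref{L5.2} and truncations) to an object $K$ of the heart, one must choose an AR-adic representative trivialized by $(\s S,\mathcal L)$, replace it by an adic system using (\cite{SGA5}, V, 3.2.3) (universal images and the functor $l_k$), and then lift the adic system \emph{level by level} through the equivalences $(u_n^*)^{\heartsuit}$ on the hearts of the $\Lambda_n$-categories, checking that the lifted system is again adic because the maps $M_{n,V}\otimes\Lambda_{n-1}\to M_{n-1,V}$ are sent to isomorphisms under an equivalence; this produces an object of $\s D^b_{\s S_V,\mathcal L_V}(\s A(\mathcal X_V))$ whose class maps to $K$. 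Without this step the adic half of the proposition is not proved.
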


\begin{proof}
These restriction functors are clearly triangulated
functors preserving the standard $t$-structures.

By (\ref{Tbc}), we can shrink $S=\text{Spec }A$ so that
for any $F$ and $G$ of the form $j_!L,$ where $j:\mathcal
U_S\to\mathcal X_S$ in $\s S_S$ and $L\in\mathcal
L_S(\mathcal U_S),$ the formations of $R\s Hom
_{\mathcal X_S}(F,G)$ commute with base change on $S,$
and that the complexes $a_*\s Ext^q_{\mathcal X_S}(F,G)$
on $S$ are lcc (see Remark \ref{referee21} below for explanation) and of formation compatible with base
change, i.e.\ the cohomology sheaves are lcc, and for any
$g:S'\to S,$ the base change morphism for $a_*:$
$$
g^*a_*\s Ext^q_{\mathcal X_S}(F,G)\to a_{S'*}g'^*
\s Ext^q_{\mathcal X_S}(F,G)
$$
is an isomorphism. Then using the same argument as in
\cite{BBD}, the claim for $u^*_n$ and $i^*_n$ follows.
For the reader's convenience, we explain the proof in
\cite{BBD} in more detail.

Note that the spectra of $V,\ \bb C$ and $s$ have no
non-trivial \'etale surjections mapping to them, so their
small \'etale topoi are equivalent to the topos of sets. In particular,
$Ra_{V*}$ (resp.\ $Ra_{\bb C*}$ and $Ra_{s*}$) is just
$R\Gamma.$ Let us show the full faithfulness of $u^*_n$
and $i^*_n$ first. For $K,L\in D^b_{\s S_V,\mathcal
L_V}(\mathcal X_V,\Lambda_n),$ let $K_{\bb C}$ and
$L_{\bb C}$ (resp.\ $K_s$ and $L_s$) be their images
under $u^*_n$ (resp.\ $i^*_n$). Then the full faithfulness
follows from the more general claim that, the maps
$$
\xymatrix@C=.6cm{
Ext^i_{\mathcal X}(K_{\bb C},L_{\bb C}) &
Ext^i_{\mathcal X_V}(K,L) \ar[l]_-{u^*_n} \ar[r]^-{i^*_n}
& Ext^i_{\mathcal X_s}(K_s,L_s)}
$$
are bijective for all $i.$

Since $Hom_{D_c(\mathcal X,\Lambda_n)}(K,-)$ and
$Hom_{D_c(\mathcal X,\Lambda_n)} (-,L)$ are cohomological
functors, by 5-lemma we may assume that $K=F$ and $L=G$
are $\Lambda_n$-sheaves. Let $j:\mathcal U_S\to\mathcal
X_S$ be the immersion of an open stratum in $\s
S_S,$ with complement $i:\mathcal Z_S\to\mathcal X_S.$
Using the short exact sequence
$$
\xymatrix@C=.5cm{
0 \ar[r] & j_{V!}j_V^*F \ar[r] & F \ar[r] & i_{V*}i_V^*F
\ar[r] & 0}
$$
and noetherian induction on the support of $F$ and $G,$
we may assume that they take the form $j_{V!}L,$ where
$j$ is the immersion of some stratum in $\s S_S,$
and $L$ is a sheaf in $\mathcal L_V.$ The spectral
sequence
$$
R^pa_{\Box,*}\s Ext^q_{\mathcal X_{\Box}}(F_{\Box},
G_{\Box})\Longrightarrow Ext^{p+q}_{\mathcal X_{\Box}}
(F_{\Box},G_{\Box})
$$
is natural in the base $\Box,$ which can be $V,\ \bb C$
or $s.$ The assumption on $S$ made before implies that
the composite base change morphism
$$
g^*a_*\s Ext^q_{\mathcal X_S}(F,G)\to a_{S'*}g'^*
\s Ext^q_{\mathcal X_S}(F,G)\to a_{S'*}\s
Ext^q_{\mathcal X_{S'}}(g'^*F,g'^*G)
$$
is an isomorphism, for all $g:S'\to S.$ Therefore, the
maps
$$
\xymatrix@C=.6cm{
Ext^i_{\mathcal X}(F_{\bb C},G_{\bb C}) &
Ext^i_{\mathcal X_V}(F,G) \ar[l]_-{u^*_n} \ar[r]^-
{i^*_n} & Ext^i_{\mathcal X_s}(F_s,G_s)}
$$
are bijective for all $i.$ The claim (hence the full
faithfulness of $u^*_n$ and $i^*_n$) follows.

This claim also implies their essential surjectivity. To
see this, let us give a lemma first.

\begin{sublemma}\label{L5.2}
Let $F:\s C\to\s D$ be a triangulated functor
between triangulated categories. Let $A,B\in\emph{Obj
}\s C,$ and let $F(A)\overset{v}{\to}F(B)\to C'\to
F(A)[1]$ be an exact triangle in $\s D.$ If the map
$$
F:Hom_{\s C}(A,B)\to Hom_{\s D}(F(A),F(B))
$$
is surjective, then $C'$ is in the essential image of $F.$
\end{sublemma}

\begin{proof}
Let $u:A\to B$ be a morphism such that $F(u)=v.$ Let $C$
be the mapping cone of $u,$ i.e.\ let the triangle
$A\overset{u}{\to}B\to C\to A[1]$ be exact. Then its
image
$$
\xymatrix@C=.6cm{
F(A) \ar[r]^v & F(B) \ar[r] & F(C) \ar[r] & F(A)[1]}
$$
is also an exact triangle. This implies that $C'\simeq
F(C).$
\end{proof}

Now we can show the essential surjectivity of $u^*_n$ and
$i^*_n.$ For $K\in D^b_{\s S,\mathcal
L}(\mathcal X,\Lambda_n),$ to show that $K$
lies in the essential image of $u^*_n,$ using the
truncation exact triangles and (\ref{L5.2}), we reduce to
the case where $K$ is a sheaf. Using noetherian induction
on the support of $K,$ we reduce to the case where
$K=j_!L,$ where $j:\mathcal U\to\mathcal X$ is the
immersion of a stratum in $\s S,$ and
$L\in\mathcal{L(U)}.$ This is in the essential image of $u_n^*.$
Similarly, $i^*_n$ is also essentially surjective.

\vskip.3truecm

Next, we prove that $u^*$ and $i^*$ are equivalences.

We claim that for $K,L\in D_c^b(\mathcal X_V,\Lambda),$
if the morphisms
$$
\xymatrix@C=.7cm @R=.1cm{
& Hom_{D_c(\mathcal X,\Lambda_n)}(\widehat{K}_{n,\bb
C},\widehat{L}_{n,\bb C}) \\
Hom_{D_c(\mathcal X_V,\Lambda_n)}(\widehat{K}_n,
\widehat{L}_n) \ar[ur]^-{u_n^*} \ar[dr]_-{i_n^*} & \\
& Hom_{D_c(\mathcal X_s,\Lambda_n)}(
\widehat{K}_{n,s},\widehat{L}_{n,s})}
$$
are bijective for all $n,$ then the morphisms
$$
\xymatrix@C=.7cm @R=.1cm{
& Hom_{D_c(\mathcal X,\Lambda)}(K_{\bb C},L_{\bb C}) \\
Hom_{D_c(\mathcal X_V,\Lambda)}(K,L) \ar[ur]^-{u^*}
\ar[dr]_-{i^*} & \\
& Hom_{D_c(\mathcal X_s,\Lambda)}(K_s,L_s)}
$$
are bijective. Let $\Box$ be one of the bases $V,\ \bb
C$ or $s.$ Since $K$ and $L$ are bounded, we see from
the spectral sequence
$$
R^pa_{\Box,*}\s Ext^q_{\mathcal X_{\Box}}
(\widehat{K}_{n,\Box},\widehat{L}_{n,\Box})
\Longrightarrow Ext^{p+q}_{\mathcal X_{\Box}}
(\widehat{K}_{n,\Box},\widehat{L}_{n,\Box})
$$
and the finiteness of $R\s Hom$ and $Ra_{\Box,*}$
(\cite{LO1}, 4.2.2, 4.1) that, the groups
$Ext^{-1}(\widehat{K}_{n,\Box},\widehat{L}_{n,\Box})$ are
finite for all $n,$ hence they form a projective system
satisfying the condition (ML) (cf.\ EGA $0_{\text{III}},$ 13.1.2). By (\cite{LO2}, 3.1.3), we
have an isomorphism
$$
Hom_{D_c(\mathcal X_{\Box},\Lambda)}(K_{\Box},L_{\Box})
\overset{\sim}{\to}\varprojlim_nHom_{D_c(\mathcal
X_{\Box},\Lambda_n)}
(\widehat{K}_{n,\Box},\widehat{L}_{n,\Box}),
$$
natural in the base $\Box,$ and the claim follows.

Since when restricted to $D_{\s S_{\Box},\mathcal
L_{\Box}}^b,$ the functors $u^*_n$ and $i^*_n$ are fully
faithful for all $n,$ we deduce that $u^*$ and $i^*$ are
also fully faithful.

Finally we prove the essential surjectivity of $u^*$ and
$i^*.$ Let $K\in D^b_{\s S,\mathcal L}(\mathcal X,\Lambda).$
By the full faithfulness of $u^*$ and (\ref{L5.2}), we may
assume that $K$ is in the core $(D^b_{\s S,\mathcal L})^{\heartsuit}$ of $D^b_{\s S,\mathcal L}$ with
respect to the standard $t$-structure. Then there exists an
AR-adic representative $M=\{M_n,\rho_n:M_n\to M_{n-1}\}$ of
$K$ in $\s A(\mathcal X)$ that
is trivialized by $(\s S,\mathcal L);$ for instance $M=\s
H^0(\widehat{K})$ by (\cite{Sun}, 3.5). By
(\cite{SGA5}, V, 3.2.3), since $M$ is AR-adic, it
satisfies the condition (MLAR) (see (\cite{SGA5}, V, 2.1.1) for definition) and, if we denote by
$N=(N_n)_n$ the projective system of the universal images of
$M,$ there exists an integer $k\ge0$ such that
$l_k(N):=(N_{n+k}\otimes\Lambda_n)_n$ is an adic system. By
construction, the system $l_k(N)$ is trivialized by $(\s
S,\mathcal L),$ and is AR-isomorphic to $M,$ so we may
assume that $M$ is adic.

The functor $u_n^*$ induces an equivalence on the
cores with respect to the standard $t$-structures:
$$
(u_n^*)^{\heartsuit}:D^b_{\s S_V,\mathcal L_V}(\mathcal
X_V,\Lambda_n)^{\heartsuit}\to D^b_{\s S,\mathcal L}(\mathcal
X,\Lambda_n)^{\heartsuit}.
$$
Let $M_V=\{M_{n,V},\rho_{n,V}:M_{n,V}\to M_{n-1,V}\}$ be
the unique (up to isomorphism) extension of $M$ to
$\mathcal X_V,$ where $M_{n,V}$ (resp.\ $\rho_{n,V}$) is an
object (resp.\ a morphism) in $D^b_{\s
S_V,\mathcal L_V}(\mathcal X_V,\Lambda_n)^{\heartsuit}$. The induced
morphism $\overline{\rho}_{n,V}:M_{n,V}\otimes\Lambda_{n-1}
\to M_{n-1,V}$ is an isomorphism because it is sent to the
isomorphism $\overline{\rho}_n:M_n\otimes\Lambda_{n-1}\to
M_{n-1}$ via the equivalence $(u_{n-1}^*)^{\heartsuit}.$ This shows that
$M_V$ is an adic system of sheaves on $\mathcal X_V,$ each
level being trivialized by $(\s S_V,\mathcal L_V),$ and it
gives an object in $\s D^b_{\s S_V,\mathcal L_V}(\s
A(\mathcal X_V))$ whose image in $D^b_{\s S_V,\mathcal
L_V}(\mathcal X_V,\Lambda)$ is sent to $K$ under $u^*:$
$$
\xymatrix@C=.3cm @R=.1cm{
\s D^b_{\s S_V,\mathcal L_V}(\s A(\mathcal X_V))
\ar[rrrrr]^-{Q_V} \ar[dddd]_-{u'^*} &&&&& D^b_{\s
S_V,\mathcal L_V}(\mathcal X_V,\Lambda) \ar[dddd]^-{u^*} \\
& M_V \ar@{|->}[rrr] \ar@{|->}[dd] &&& [M_V] \ar@{|..>}[dd]
& \\ &&&&&& \\
& M \ar@{|->}[rrr] &&& K & \\
\s D^b_{\s S,\mathcal L}(\s A(\mathcal X)) \ar[rrrrr]_-Q
&&&&& D^b_{\s S,\mathcal L}(\mathcal X,\Lambda),}
$$
the functors $Q$ and $Q_V$ in the diagram being the localization functors.
This shows that $u^*$ (and similarly, $i^*$) is essentially
surjective.
\end{proof}

\begin{subremark}\label{referee21}
Note that, in contrast to the case of schemes, $a_*\s Ext^q_{\mathcal X_S}(F,G)$ in the proof is in general an \textit{unbounded} complex, so we need to explain why one can shrink $S$ such that all cohomology sheaves are lcc (for all $q$, too). In the proof of (\cite{Sun}, Th. 3.9), that stratifiable complexes are stable under the six operations, we actually proved more, namely, given a pair $(\s S,\mathcal L)$ on $\mathcal X$ and a morphism $f:\mathcal X\to \mathcal Y$ of $S$-algebraic stacks, there exists a pair $(\s S',\mathcal L')$ on $\mathcal Y$ such that $f_*$ takes $D^+_{\s S,\mathcal L}(\mathcal X,\Lambda)$ into $D^+_{\s S',\mathcal L'}(\mathcal Y,\Lambda);$ similar results hold for the other operations, as well as for $\Lambda_n$-coefficients. So in our case, since the $\Lambda_0$-sheaves of the form $j_!L$ are finite in number, we see that the sheaves $R^pa_*\s Ext^q_{\mathcal X_S}(F,G)$ (for all $p, q\in\bb Z$ and $F,G$ of the form $j_!L$) are trivialized by a pair $(\s S',\mathcal L')$ on $S$, and consequently we may replace $S$ by an affine open subscheme in an open stratum in $\s S'.$
\end{subremark}

\begin{anitem}\label{abelian-rank}
We will show that if the $\bb C$-algebraic stack $\mathcal
X$ has affine stabilizers (\ref{affine-stab}), then
$\mathcal X_s$ obtained as above also has affine stabilizers.

It is not difficult to see that the formation of the inertia
stack is compatible with base change in the following sense.
For a 2-Cartesian diagram
$$
\xymatrix@C=1.2cm @R=.7cm{
\mathcal X' \ar[r]^-{g'} \ar[d]_-{f'} & \mathcal X \ar[d]^-f \\
\mathcal Y' \ar[r]_-g & \mathcal Y}
$$
of algebraic stacks over any base $S$ (not necessarily locally of
finite type), let $\mathcal I_f$ and $\mathcal I_{f'}$ be the
relative inertia stacks for $f$ and $f'$ respectively, then
$\mathcal I_{f'}\simeq\mathcal I_f\times_{\mathcal Y}\mathcal
Y'\simeq\mathcal I_f\times_{\mathcal X}\mathcal X'.$ Also, if
$i:\mathcal V\to\mathcal X$ is an immersion, then the restriction
of $\mathcal I_{\mathcal X/S}$ to $\mathcal V$ is $\mathcal
I_{\mathcal V/S},$ i.e.\ $\mathcal I_{\mathcal V/S}\simeq
\mathcal I_{\mathcal X/S}\times_{\mathcal X}\mathcal V.$

Let $P:X_V\to\mathcal X_V$ be a
presentation, and let the following squares be 2-Cartesian:
$$
\xymatrix@C=.52cm @R=.42cm{
& I \ar[rr] \ar'[d][dd] \ar[ld] && I_V \ar'[d][dd] \ar[ld]
&& I_s \ar[dd] \ar[ld] \ar[ll] \\
\mathcal I \ar[rr] \ar[dd] && \mathcal I_V \ar[dd] &&
\mathcal I_s \ar[ll] \ar[dd] & \\
& X \ar[ld] \ar'[r][rr] && X_V \ar[ld] && X_s \ar'[l][ll]
\ar[ld] \\
\mathcal X \ar[rr] && \mathcal X_V && \mathcal X_s. \ar[ll]}
$$
Since any $\bb C$- or $s$-point of $\mathcal X_V$ can be
lifted to $X_V,$ we may replace $\mathcal X_V$ by $X_V$. Since $X_V$ is flat over $\text{Spec }V,$ and the generic point $\eta\in\text{Spec }V$ is an open subset, by (EGA IV, 2.3.10) we see that the generic fiber $X_{\eta}$ is dense in $X_V.$ This is also true with $X_V$ replaced by any stratum in $P^*\s S_V$ (by our assumption that any stratum in $\s S_S$ is $S$-smooth, a fortiori, $S$-flat).

We may assume that $I_V$ is flat over $X_V,$ by stratifying $X_V.$
Replacing $X_V$ by its maximal reduced
subschemes if necessary, we may assume that
$X_V$ is integral. Let $x$ be its generic point, which is
a field of characteristic 0. Therefore $I_{V,x}$ is smooth
over $x,$ hence $I_V$ is smooth over a dense open subset of
$X_V.$ By noetherian induction, we may assume that $I_V$ is smooth over $X_V.$ Then we apply the lower semi-continuity of abelian ranks for smooth group schemes (\cite{SGA3}, X, 8.7), to deduce that all fibers of $I_V\to X_V$ are affine (note that all fibers of $I_{\eta}\to X_{\eta}$ are affine).

\end{anitem}

\begin{anitem}\label{finidiag}
Also, if $f:\mathcal X\to\mathcal Y$ is a proper morphism of finite diagonal between $\bb C$-algebraic stacks, then one can choose $S$ and $V$ such that $f$ extends to a proper morphism of finite diagonal $f_V:\mathcal X_V\to\mathcal Y_V$ between $V$-algebraic stacks. Clearly one has a proper extension $f_V.$ The base change of the diagonal morphism
\[
\Delta_{f_V}:\mathcal X_V\to\mathcal X_V\times_{\mathcal Y_V}\mathcal X_V
\]
to the geometric generic point $\text{Spec }\bb C\to\text{Spec }V$ is the diagonal morphism of $f$
\[
\Delta_f:\mathcal{X\to X\times_YX}.
\]
As $f_V$ is separated, $\Delta_{f_V}$ is representable and proper, so it suffices to show that $\Delta_{f_V}$ is quasi-finite (EGA IV, 8.11.1), which is equivalent to 
$\mathcal I_{f_V}\to\mathcal X_V$ being quasi-finite. As in (\ref{abelian-rank}), we may replace $\mathcal X_V$ by a presentation $X_V$ (and replace $\mathcal I_{f_V}$ by $I_{f_V}:=\mathcal I_{f_V}\times_{\mathcal X_V}X_V$ as well), and stratify $X_V$ to assume that $I_{f_V}$ is a flat group scheme over $X_V$ (assumed integral). Now $I_{f_{\eta}}$ is finite over $X_{\eta}$ (as $\eta\hookrightarrow\bb C$ is faithfully flat), and $X_{\eta}$ is dense in $X_V$ as before, by (\cite{SGA3}, VI$_\text{B}$, Cor.\ 4.3) we see that $I_{f_V}$ is quasi-finite over $X_V.$

In particular, the special fiber $f_s:\mathcal X_s\to\mathcal Y_s$ is also proper and of finite diagonal.

\end{anitem}

\subsection{The proof}

Let $\mathcal X$ be a $\bb C$-algebraic stack, with analytification $\fk X.$
Let $\Omega$ be a field of characteristic 0; the
examples that we have in mind are $\Omega=\bb Q,\ \bb C,\ E_{\lambda}$ or $\overline{\bb Q}_{\ell}.$

\begin{anitem}\label{analytic-perverse}
Following the idea of \cite{LO3}, one can define \textit{$\Omega$-perverse
sheaves} on $\fk X_{\text{lis-an}}$ as
follows. Let $P:X\to\mathcal X$ be a presentation of relative
dimension $d,$ and let $P^{\text{an}}:X(\bb C)\to\fk X$ be
its analytification. Let $p=p_{1/2}$ be the middle perversity on
$X(\bb C).$ Define $\leftexp{p}{\s D}_c^{\le0}(\fk
X,\Omega)$ (resp.\ $\leftexp{p}{\s D}_c^{\ge0}(\fk
X,\Omega)$) to be the full subcategory of objects $K\in\s D_c(\fk
X,\Omega)$ such that $P^{\text{an},*}K[d]$ is in
$\leftexp{p}{\s D}_c^{\le0}(X(\bb C),\Omega)$
(resp.\ $\leftexp{p}{\s D}_c^{\ge0}(X(\bb C),\Omega)$).
As in (\cite{LO3}, 4.1, 4.2), one can show that
these subcategories do not depend on the choice of the
presentation $P,$ and they define a $t$-structure, called
the (\textit{middle}) \textit{perverse $t$-structure} on $\fk X.$
\end{anitem}

\begin{anitem}\label{geometric-origin}
Following (\cite{BBD}, 6.2.4), one can define complexes of sheaves
\textit{of geometric origin} as follows. Let $\s F$ be a
$\Omega$-perverse sheaf on $\fk X_{\text{lis-an}}$ (resp.\ a
$\overline{\bb Q}_{\ell}$-perverse sheaf on $\mathcal
X_{\text{lis-\'et}}$). We say that $\s F$ is
\textit{semi-simple of geometric origin} if it is a
semi-simple perverse sheaf, and every irreducible constituent belongs to
the smallest family of simple perverse sheaves on complex
analytic stacks (resp.\ lisse-\'etale sites of
$\bb C$-algebraic stacks) that

(a) contains the constant sheaf $\underline{\Omega}$
over a point, and is stable under the following
operations:

(b) taking the constituents of $\leftexp{p}{\s H}^i
T,$ for $T=f_*,f_!,f^*,f^!,R\s Hom(-,-)$ and
$-\otimes-,$ where $f$ is an arbitrary
\textit{algebraic} morphism between stacks.

A complex $K\in\s D_c^b(\fk X,\Omega)$ (resp.\
$K\in D_c^b(\mathcal X,\overline{\bb Q}_{\ell})$) is said
to be \textit{semi-simple of geometric origin} if it is
isomorphic to the direct sum of the $(\leftexp{p}{\s
H}^iK)[-i]$'s, and each $\leftexp{p}{\s H}^iK$ is
semi-simple of geometric origin. Notice that this property
is not local for the smooth topology, as the example in
(\cite{Sun2}, Section 1) shows.

One can replace the constant sheaf $E_{\lambda}$ by its ring
of integers $\s O_{\lambda},$ and deduce that every complex $K\in\s
D^b_c(\fk X,\overline{\bb Q}_{\ell})$ that is semi-simple of
geometric origin has an integral structure, hence belongs to the
essential image of $D^b_c(\fk X,\overline{\bb Q}_{\ell})\to\s
D^b_c(\fk X,\overline{\bb Q}_{\ell}).$ Therefore, we can apply
(\ref{R-rat}).
\end{anitem}

\begin{sublemma}\label{6.2.6}\emph{(stack version of
(\cite{BBD}, 6.2.6))}
Let $\s F$ be a simple $\overline{\bb Q}_{\ell}$-perverse sheaf
of geometric origin on $\mathcal X.$ For $A\subset\bb C$ large enough,
the equivalence (\ref{6.1.9})
$$
D^b_{\s S,\mathcal L}(\mathcal X,\overline{\bb
Q}_{\ell}) \longleftrightarrow D^b_{\s S_s,\mathcal
L_s}(\mathcal X_s,\overline{\bb Q}_{\ell})
$$
takes $\s F$ to a simple perverse sheaf $\s
F_s$ on $\mathcal X_s,$ such that $(\mathcal X_s,\s
F_s)$ is deduced by base extension from a pair $(\mathcal
X_0,\s F_0)$ defined over a finite field $\bb
F_q,$ and $\s F_0$ is $\iota$-pure.
\end{sublemma}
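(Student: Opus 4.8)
The plan is to spread out, over a finitely generated subring $A\subset\bb C,$ the whole finite construction exhibiting $\s F$ as a simple perverse sheaf of geometric origin, to transport this data to the closed fibre $\mathcal X_s$ by means of the equivalences (\ref{6.1.9}), and then to descend the outcome to a finite field, where $\iota$-purity is supplied by the weight formalism for stacks over finite fields.

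First I would unwind the definition (\ref{geometric-origin}): $\s F$ is obtained from finitely many copies of the constant sheaf $\underline{\overline{\bb Q}_\ell}$ on a point by applying, finitely many times, the operations $f_*,f_!,f^*,f^!,-\otimes-$ and $R\s Hom(-,-)$ attached to algebraic morphisms of $\bb C$-stacks, interspersed with the passage to an irreducible constituent of some $\leftexp{p}{\s H}^i.$ This recipe is a finite diagram of algebraic stacks and morphisms, all of finite type over $\bb C,$ and by (\ref{geometric-origin}) every sheaf occurring in it carries an integral $\s O_\lambda$-structure. Enlarging the ring of (\ref{6.1.9}), I would pick $A\subset\bb C$ of finite type over $\bb Z$ with $1/\ell\in A,$ large enough that the whole diagram (and these integral models) spreads out over $\text{Spec}\,A,$ that $\text{Spec}\,A$ satisfies (LO), and --- after refining stratifications and shrinking $\text{Spec}\,A$ --- that every intermediate complex lies in some $D^b_{\s S',\mathcal L'}$ with $S$-smooth strata having geometrically connected fibres (stability of stratifiability under the six operations, (\cite{Sun}, 3.9); cf.\ Remark \ref{referee21}). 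Applying (\ref{Tbc}) to the finitely many morphisms involved and shrinking $\text{Spec}\,A$ once more, the formation of $f_*$ and of $R\s Hom$ at every step commutes with arbitrary base change; for $f_!,f^*,f^!$ and $\otimes$ this is automatic; and since on each $D^b_{\s S',\mathcal L'}$ the perverse $t$-structure (\ref{analytic-perverse}) is glued from the standard $t$-structures along the $S$-smooth strata, shifted by relative dimensions that are locally constant on the base, the formation of $\leftexp{p}{\s H}^i$ and $\leftexp{p}{\tau}_{\le i}$ also commutes with base change. In particular every instance of (\ref{6.1.9}) occurring here identifies the perverse $t$-structures on the two sides.

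With these compatibilities in place, let $\Phi$ denote the equivalence $i^*\circ(u^*)^{-1}$ of (\ref{6.1.9}), in its $\overline{\bb Q}_\ell$-form (legitimate here by (\ref{geometric-origin}) and (\ref{R-rat})). Then $\Phi$ is perverse $t$-exact, so it restricts to an equivalence on the hearts and preserves simple perverse sheaves; hence $\s F_s:=\Phi(\s F)$ is a simple perverse sheaf on $\mathcal X_s.$ Moreover, since $\Phi$ commutes with each operation and with $\leftexp{p}{\s H}^i,$ applying it throughout the specialized recipe shows that $\s F_s$ is, over $s,$ a simple perverse sheaf of geometric origin produced by the same recipe. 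As $s$ is the algebraic closure of a finite field and $(\mathcal X_s,\s S_s,\mathcal L_s,\s F_s)$ together with the entire specialized recipe is of finite type over $s,$ it descends to a finite subfield: there is a pair $(\mathcal X_0,\s F_0)$ over some $\bb F_q,$ equipped with models of the recipe, whose pullback to $s$ recovers $(\mathcal X_s,\s F_s).$ Since $s/\bb F_q$ is faithfully flat of relative dimension $0,$ perversity and simplicity descend along it, so $\s F_0$ is a simple perverse sheaf of geometric origin on $\mathcal X_0.$ Finally the constant sheaf on $\text{Spec}\,\bb F_q$ is $\iota$-pure of weight $0,$ and by the weight formalism for $\bb F_q$-algebraic stacks of \cite{Sun2} --- under which $f_!,f_*,f^*,f^!,\otimes,R\s Hom$ and the perverse cohomology functors preserve $\iota$-mixedness, and every $\iota$-mixed perverse sheaf admits a weight filtration with $\iota$-pure graded pieces --- the sheaf $\s F_0$ is $\iota$-mixed and, being simple, $\iota$-pure.

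I expect the principal obstacle to lie in the second step: one must arrange, by a \emph{single} shrinking of $\text{Spec}\,A,$ that \emph{all} of the six operations \emph{and} the perverse truncations occurring in the construction of $\s F$ commute with the base changes along $u:\mathcal X\to\mathcal X_V$ and $i:\mathcal X_s\to\mathcal X_V,$ while keeping every intermediate object inside a category $D^b_{\s S',\mathcal L'}$ to which (\ref{6.1.9}) applies. For $f_*$ and $R\s Hom$ this uniformity is exactly what the generic base change theorem (\ref{Tbc}) provides (used level by level to pass from $\Lambda_n$- to $\overline{\bb Q}_\ell$-coefficients); propagating the stratification data $(\s S',\mathcal L')$ correctly through the recipe --- so that (\ref{6.1.9}) is applicable at every node and the glued perverse $t$-structure is the right one --- is the remaining bookkeeping.
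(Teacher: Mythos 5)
Your proposal is correct in substance, but it takes a genuinely different and much heavier route than the paper for the key point, namely why $\s F_0$ is $\iota$-mixed. You redo the original BBD-style argument: spread out the entire finite recipe exhibiting $\s F$ as being of geometric origin, arrange by one shrinking of $\text{Spec }A$ that all six operations and the perverse truncations commute with the base changes along $u$ and $i$ (via (\ref{Tbc}) and the gluing description of the perverse $t$-structure on $S$-smooth strata), transport the recipe to $\mathcal X_s$, descend it to $\bb F_q$, and deduce $\iota$-mixedness of $\s F_0$ from stability of $\iota$-mixedness under the operations, then conclude $\iota$-purity from simplicity via the weight filtration. The paper short-circuits all of this: it only uses the equivalence (\ref{6.1.9}) once, on $\s F$ itself, notes that $\s F_s$ descends to some $(\mathcal X_0,\s F_0)$ over a finite field by general finiteness, and then invokes Lafforgue's theorem, which gives $\iota$-mixedness of $\s F_0$ with no reference to the geometric-origin recipe at all; the final step (simple and $\iota$-mixed implies $\iota$-pure) is exactly (\cite{Sun2}, 3.4), which your weight-filtration sentence reproves. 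What your approach buys is independence from Lafforgue's input (it is the pre-Lafforgue argument of \cite{BBD}, 6.2.4--6.2.6, adapted to stacks), and it makes explicit the perverse $t$-exactness of the equivalence, which the paper leaves implicit; what it costs is precisely the ``remaining bookkeeping'' you flag --- commuting $\leftexp{p}{\tau}_{\le i}$ with base change at every node and keeping every intermediate complex inside a category $D^b_{\s S',\mathcal L'}$ where (\ref{6.1.9}) applies --- which is a nontrivial induction in the style of (\cite{BBD}, 6.1.10--6.1.11) that you sketch plausibly but do not carry out, and which the paper's use of Lafforgue renders unnecessary.
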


\begin{proof}
Being of geometric origin, $\s F_s$ is obtained by base extension
from some simple perverse sheaf $\s F_0$ on $\mathcal X_0,$
which is $\iota$-mixed
by Lafforgue's result. Then apply (\cite{Sun2}, 3.4).
\end{proof}

Finally, we are ready to prove the stack version of the
decomposition theorem over $\bb C.$

\begin{subtheorem}\label{6.2.5}\emph{(stack version of
(\cite{BBD}, 6.2.5))}
Let $f:\mathcal X\to\mathcal Y$ be a proper morphism of
finite diagonal between $\bb C$-algebraic stacks with
affine stabilizers. If $K\in\s D_c^b(\fk X,\Omega)$ is
semi-simple of geometric origin, then $f^{\emph{an}}_*K$
is also bounded, and is
semi-simple of geometric origin on $\fk Y.$
\end{subtheorem}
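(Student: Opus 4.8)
The plan is to reduce the statement over $\bb C$ to the known case over finite fields, following the architecture of (\cite{BBD}, 6.2.5) but using the stack-theoretic tools assembled in the previous sections. The key point is that all three ingredients — the comparison between the analytic and the lisse-\'etale derived categories (\ref{compar-cohom}), the comparison between the adic and topological derived categories on the analytic side (\ref{agree}), and the comparison between the lisse-\'etale categories over $\bb C$ and over $\bb F$ (\ref{6.1.9}) — are compatible with pushforward, so that the notion of ``semi-simple of geometric origin'' is transported faithfully along each of them.

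\textbf{Step 1: Reduce to $\overline{\bb Q}_\ell$-coefficients on the lisse-\'etale side.} Since $K$ is semi-simple of geometric origin, it is a finite direct sum of shifts of simple perverse sheaves of geometric origin, and by additivity and the compatibility of all functors in sight with direct sums and shifts we may assume $K=\s F$ is a single simple $\Omega$-perverse sheaf of geometric origin on $\fk X$. Being of geometric origin, $\s F$ is defined with $\overline{\bb Q}_\ell$-coefficients (one descends through $E_\lambda$, and by (\ref{geometric-origin}) there is an integral structure, so $\s F$ lies in the essential image of the adic category $D^b_c(\fk X,\overline{\bb Q}_\ell)\to\s D^b_c(\fk X,\overline{\bb Q}_\ell)$, cf.\ (\ref{R-rat})). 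By (\ref{compar-cohom}) (in its $\overline{\bb Q}_\ell$-form) and (\ref{agree})(ii) together with (\ref{R-rat}), we may transfer $\s F$ to a simple perverse sheaf of geometric origin on the lisse-\'etale site of $\mathcal X/\bb C$, and the statement $f^{\text{an}}_*K$ semi-simple of geometric origin on $\fk Y$ becomes the assertion that $f_*\s F$ is semi-simple of geometric origin on $\mathcal X_{\text{lis-\'et}}/\bb C$.

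\textbf{Step 2: Spread out and specialize.} Choose a pair $(\s S,\mathcal L)$ trivializing $\s F$ and $f_*\s F$ (using (\cite{Sun}, 3.9) to control $f_*$). By (\ref{6.1.9}) and (\ref{6.2.6}), for $A\subset\bb C$ large enough there is a strictly henselian DVR $V$ with $A\subset V\subset\bb C$, residue field $s$ an algebraic closure of a finite field, a proper morphism of finite diagonal $f_V:\mathcal X_V\to\mathcal Y_V$ extending $f$ (\ref{finidiag}), such that the equivalence $D^b_{\s S,\mathcal L}(\mathcal X,\overline{\bb Q}_\ell)\simeq D^b_{\s S_s,\mathcal L_s}(\mathcal X_s,\overline{\bb Q}_\ell)$ carries $\s F$ to a simple perverse sheaf $\s F_s$, itself deduced by base extension from an $\iota$-pure simple perverse sheaf $\s F_0$ on $\mathcal X_0$ over a finite field $\bb F_q$; moreover $\mathcal X_s$ (hence $\mathcal X_0$) has affine stabilizers by (\ref{abelian-rank}), and $f_0$ is proper of finite diagonal by (\ref{finidiag}).

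\textbf{Step 3: Apply the decomposition theorem over $\bb F_q$ and descend.} Now $(\cite{Sun2})$ applies to $f_0:\mathcal X_0\to\mathcal Y_0$ and the $\iota$-pure $\s F_0$: $f_{0,*}\s F_0$ is $\iota$-pure, hence a direct sum of shifts of simple perverse sheaves, each of geometric origin; base extending to $s$, the same holds for $f_{s,*}\s F_s$ on $\mathcal Y_s$. Because the equivalences in (\ref{6.1.9}) intertwine $u^*_V, i^*_s$ with the respective pushforwards (one must check this compatibility, which follows from proper base change for $f_V$ applied via (\ref{Tbc}) — this is where the generic base change theorem is genuinely used, to know that forming $f_{V,*}$ commutes with the base changes $u$ and $i$ after shrinking $S$), transporting $f_{s,*}\s F_s$ back through $i^*$ and $u^*$ shows $f_*\s F$ is semi-simple of geometric origin over $\bb C$. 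Finally transport back through (\ref{agree}) and (\ref{compar-cohom}) to conclude that $f^{\text{an}}_*K$ is semi-simple of geometric origin on $\fk Y$; boundedness of $f^{\text{an}}_*K$ follows since $f$ is proper of finite diagonal (so $f_*=f_!$ has bounded cohomological amplitude on constructible complexes, cf.\ \cite{LO2}).

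\textbf{The main obstacle} is Step 3's compatibility claim: one must verify that the equivalences of (\ref{6.1.9}) commute with the pushforward functors $f_*$, $f_{V,*}$, $f_{s,*}$ on the subcategories with prescribed stratification. This is exactly the reason (\ref{Tbc}) was proved — after shrinking $S$ (equivalently enlarging $A$), the formation of $f_{V,*}$ of a stratifiable complex commutes with the base changes to $\bb C$ and to $s$, so $u^*_V f_{V,*}\s F_V\simeq f_*\s F$ and $i^*_s f_{V,*}\s F_V\simeq f_{s,*}\s F_s$ compatibly. One also needs that $f_{V,*}\s F_V$ still lands in $D^b_{\s S'_V,\mathcal L'_V}$ for a suitable pair, which is the content of (\cite{Sun}, 3.9) applied over $V$; and one must ensure $\s F_V$ (the common extension) is perverse, which holds because $\mathcal X_V\to\text{Spec }V$ is flat with the strata $V$-smooth, so perversity can be checked fiberwise and is preserved by $u^*$ and $i^*$ up to the shift conventions. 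Granting these compatibilities, the descent is formal.
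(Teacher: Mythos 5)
Your proposal is correct and takes essentially the same route as the paper's own proof: reduction to a simple perverse sheaf, transfer through (\ref{agree}), (\ref{R-rat}) and (\ref{compar-cohom}) to the lisse-\'etale site over $\bb C$, specialization via (\ref{6.1.9}) and (\ref{6.2.6}) with the hypotheses preserved by (\ref{abelian-rank}) and (\ref{finidiag}), the finite-field decomposition theorem of \cite{Sun2}, and (\ref{Tbc}) to identify the formation of $f_*$ over $\bb C$, over $\bb F$, and over the finite field. The only cosmetic difference is that the paper handles boundedness by citing $f_!\simeq f_*$ from (\cite{Ols1}, 5.17) so that $f_*K\in D_c^-\cap D_c^+$, which is the same idea you invoke.
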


\begin{proof}
We can replace $\s D_c^b(\fk X,\Omega)$ by $\s D^b_c(\fk
X,\overline{\bb Q}_{\ell}),$ then by $D_c^b(\fk X,\overline{\bb
Q}_{\ell})$ (using (\ref{agree} ii, \ref{R-rat})), and finally
by $D_c^b(\mathcal X,\overline{\bb Q}_{\ell})$ (using
(\ref{compar-cohom})).

From (\cite{Ols1}, 5.17) we know that there is a canonical
isomorphism $f_!\simeq f_*$ on $D_c^-(\mathcal
X,\overline{\bb Q}_{\ell}).$ For $K\in D_c^b,$ we
have $f_!K\in D_c^-$ and $f_*K\in D_c^+,$ hence $f_*K\in
D_c^b.$

\begin{sublemma}
We can reduce to the case where $K$ is a simple perverse
sheaf $\s F.$
\end{sublemma}

\begin{proof}
First, we show that the statement
for simple perverse sheaves of geometric origin implies
the statement for semi-simple perverse sheaves of
geometric origin. This is clear:
$$
f_*(\bigoplus_i\s F_i)=\bigoplus_if_*\s
F_i=\bigoplus_i\bigoplus_j\leftexp{p}{\s H}^j(f_*
\s F_i)[-j]=\bigoplus_j\leftexp{p}{\s H}^j
(f_*(\bigoplus_i\s F_i))[-j].
$$

Then we show that the statement for semi-simple perverse
sheaves implies the general statement. If $K$ is
semi-simple of geometric origin, we have
$$
f_*K=\bigoplus_if_*\ \leftexp{p}{\s H}^i(K)[-i]=
\bigoplus_i\bigoplus_j\leftexp{p}{\s H}^jf_*\
\leftexp{p}{\s H}^i(K)[-i-j].
$$
Taking $\leftexp{p}{\s H}^n$ on both sides, we get
$$
\leftexp{p}{\s H}^n(f_*K)=\bigoplus_{i+j=n}
\leftexp{p}{\s H}^jf_*\ \leftexp{p}{\s H}^i(K),
$$
therefore $f_*K=\bigoplus_n\leftexp{p}{\s H}^n(f_*K)[-n]$ and
each summand is semi-simple of geometric origin.
\end{proof}

Now assume that $K$ is a simple perverse sheaf $\s F,$ which is
stratifiable (\cite{Sun}, 3.4 v). By (\ref{6.2.6}), $\s F$
corresponds to a simple perverse sheaf $\s F_s$ which is induced
from an $\iota$-pure perverse sheaf $\s F_0$ by base change. By
(\ref{Tbc}), the formation of $f_*$ over $\bb C$ is the same as the
formation of $f_{s,*}$ over $\bb F$ and of $f_{0,*}$ over a finite
field. By (\ref{abelian-rank}, \ref{finidiag}) and (\cite{Sun2}, 3.9 iii), $f_{0,*}\s F_0$ is also
$\iota$-pure. By (\cite{Sun2}, 3.11, 3.12), we have
$$
f_{s,*}\s F_s\simeq\bigoplus_{i\in\bb Z}
\leftexp{p}{\s H}^i(f_{s,*}\s F_s)[-i],
$$
and each $\leftexp{p}{\s H}^i(f_{s,*}\s F_s)$ is
semi-simple of geometric origin. Therefore $f_*\s F$
(and hence $f^{\text{an}}_*\s F^{\text{an}}$) is
semi-simple of geometric origin.
\end{proof}

We hope to work out Saito's theory of mixed Hodge modules
for complex analytic stacks in the future, which may lead to an alternative proof of the Decomposition theorem.

\vskip1truecm

\begin{flushleft}
Shenghao SUN \\
Yau Mathematical Sciences Center \\
Tsinghua Univ. \\
Beijing 100084, P. R. China
\end{flushleft}

\end{document}